\documentclass[a4paper,10pt]{amsart}

\usepackage{inputenc}
\usepackage{color}
\usepackage{graphicx}
\usepackage{times,mathptm}
\usepackage{amsfonts,amscd,amssymb,amsmath}
\usepackage{xypic}
\usepackage{latexsym}
\usepackage{mathdots}
\usepackage{tikz}
\usetikzlibrary{arrows,snakes,backgrounds}

\newtheorem{proposition}{Proposition}[section]
\newtheorem{lemma}[proposition]{Lemma}
\newtheorem{corollary}[proposition]{Corollary}
\newtheorem{theorem}[proposition]{Theorem}

\theoremstyle{definition}
\newtheorem{definition}[proposition]{Definition}
\newtheorem{example}[proposition]{Example}

\theoremstyle{remark}
\newtheorem{remark}[proposition]{Remark}

\newcommand{\proplabel}[1]{\label{prop:#1}}
\newcommand{\propref}[1]{Proposition~\ref{prop:#1}}

\newcommand{\lemlabel}[1]{\label{lem:#1}}
\newcommand{\lemref}[1]{Lemma~\ref{lem:#1}}

\newcommand{\thelabel}[1]{\label{the:#1}}
\newcommand{\theref}[1]{Theorem~\ref{the:#1}}

\newcommand{\corlabel}[1]{\label{cor:#1}}
\newcommand{\corref}[1]{Corollary~\ref{cor:#1}}

\newcommand{\deflabel}[1]{\label{def:#1}}
\newcommand{\defref}[1]{Definition~\ref{def:#1}}

\newcommand{\exalabel}[1]{\label{ex:#1}}
\newcommand{\exaref}[1]{Example~\ref{ex:#1}}

\def\Ker{{\rm Ker}}

\def\Mod{{\rm Mod}}

\def\ot{\otimes}

\def\-{{\rm-}}

\def\dim{{\rm dim}}

\def\ov{\overline}
\def\el{\rm el}

\def\ann{\rm ann}
\def\Irr{\rm Irr}
\def\Aut{\rm Aut}
\def\Ind{\rm Ind}

\def\mapright#1{\smash{\mathop{\longrightarrow}\limits^{#1}}}

\def\hookmapright#1{\smash{\mathop{\hookrightarrow}\limits^{#1}}}

\newcommand{\dsubset}{\mathrel{
\rotatebox[origin=c]{45}{$\subset$}}
}

\newcommand{\dsubsetneq}{\mathrel{
\rotatebox[origin=c]{45}{$\subsetneq$}}
}

\makeindex

\setlength{\parindent}{0pt}
\begin{document}
\title{Glider representations of group algebra filtrations of nilpotent groups}
\author[F. Caenepeel]{Frederik Caenepeel}
\address{Department of Mathematics, University of Antwerp, Antwerp, Belgium}
\email{Frederik.Caenepeel@uantwerpen.be}
\author[F. Van Oystaeyen]{Fred Van Oystaeyen}
\address{Department of Mathematics, University of Antwerp, Antwerp, Belgium}
\email{Fred.Vanoystaeyen@uantwerpen.be}

\begin{abstract}
We continue the study of glider representations of finite groups $G$ with given structure chain of subgroups $e \subset G_1 \subset \ldots \subset G_d = G$. We give a characterization of irreducible gliders of essential length $e \leq d$ which in the case of $p$-groups allows to prove some results about classical representation theory. The paper also contains an introduction to generalized character theory for glider representations and an extension of the decomposition groups in the Clifford theory. Furthermore, we study irreducible glider representations for products of groups and nilpotent groups.
\end{abstract}

\thanks{The first author is Aspirant PhD Fellow of FWO}
\maketitle

\section{Introduction}

In \cite{CVo1} we introduced glider representations of a finite group and started the study of a Clifford theory for these. The category of finitely generated glider representations of a group $G$ with structure chain $e \subset G_1 \subset \ldots \subset G_d = G$ is semisimple, i.e. each glider may be written as the fragment direct sum of irreducible gliders, see \cite{EVO}. The chain in $G$ defines a finite algebra filtration of length $d$ on $KG$ by $F_{-n}KG = 0$ for $n > 0, F_iKG = KG_i$ for $i$ between 0 and $d$, $F_mKG = KG$ for $m \geq d$. For such filtrations the irreducible gliders of length $e \leq d$ are easily studied.\\
The paper starts with a short preliminary section about irreducible gliders. In Section 3 we first deduce some additional theory concerning irreducible gliders, mainly relating to $V \oplus \ldots \oplus V = V^n$ for some irreducible glider $V$. We obtain\\

{\bf \theref{1}}
\emph{Let $V \in \Irr(G)$ and let $v_1,\ldots, v_n \in V$. If $a = v_1 + \ldots + v_n \in V^n$, then $KGa \cong V^m$ for some $m \leq n$ if and only if $\dim_K <v_1,\ldots, v_n> = m$.}\\

This allows treatment of gliders of type $M \supset Ka$. When considering chains of longer lengths 
the following problem appears: let $H \subset G$ be a subgroup, $U$ a simple representation of $G$ and $V$ a simple $H$-representation appearing in the decomposition of $U$ as an $H$-representation. Is it possible to have $V$ appear in different $U$ and $U'$ such that $U$ and $U'$ have different dimension? We first consider $p$-groups, then we obtain\\

{\bf \theref{pgroup}}
\emph{Let $G$ be a $p$-group and let $e \subset Z(G) \subset G_2 \subset \ldots \subset G_d = G$ be a maximal chain of normal subgroups. If $~V, W \in \Irr(G)$ are such that they lie over the same irreducible representation of $G_{d-1}$, then $\dim(V) = \dim(W)$.}\\

As an immediate corollary, we get the same result for finite nilpotent groups. This also shows that the Hasse diagram is not connected. The second part of Section 3 contains a definition of the character or generalized trace map for a glider representation. The example of $Q_8$ and $D_8$, the notorious example of a pair of groups establishing some shortcoming of classical character tables, is now given to show that gliders and the generalized character do discern between these groups! In \defref{gentrace} we define the generalized trace map of a glider $M$ of essential length $d$ as a map $\chi_M: ~G \to M_{d+1}(K)$ mapping $g$ to a lower triangular matrix of suitable module characters associated to some $KG_iM_j$ as $KG_i$-modules. We obtain\\

{\bf \propref{anti-diagonal}}
\emph{Let $\Omega \supset M$ be a glider representation with respect to a finite group algebra filtration. Then $M$ is irreducible of essential length equal to the filtration length and with $B(M) = 0$ if and only if the matrix $\chi(e)$ is symmetric with respect to the anti-diagonal and has a 1 in the left most corner.}\\

This only forms the beginning of a generalized character theory for gliders and is work in progress.\\

Now in Section 4 we reconsider decomposition groups but here we look at decomposition groups associated to each square in the ladder relating the two chains, so each square leads to five decomposition groups:
$$\begin{array}{ccccc}
H_{i+1} & \subset & G_{i+1}' & \subset & G_{i+1}\\
\cup &&  & \dsubset~~~& \cup\\
H_i' & & H_i^2 && G_i''\\
\cup &~~~ \dsubset &&& \cup\\
H_i & \subset &G_i' & \subset & G_i 
\end{array}$$ 

We study the relations between these in \propref{4.1}, \propref{4.2}, \propref{4.3} and we arrive at\\

{\bf \theref{4.4}}
Let $G_2$ be a $p$-group and $S$ some building block such that its associated decomposition graph takes the form \eqref{situatie1} or \eqref{situatie2}. Then $G_2 \cong H_1 \rtimes (C_p \times C_p)$ where the semi-direct product is not a direct product. Moreover $F = H_1 \rtimes <\ov{g}> = H_1 \rtimes <\ov{z}>$ for some $g, z \in G_2 - (H_2 \cup G_1)$ such that $F \cong H_1 \rtimes (<\ov{g},\ov{z}>) \subset H_1 \rtimes G_2/H_2 \times G_2/G_1$.\\

Finally, Section 5 deals with nilpotent groups of order $p^kq^l$. The section contains characterizations for irreducible gliders to be isomorphic to a tensor product of gliders in terms of triples defined just before \propref{a=bc}. The main result we obtain is\\

{\bf \theref{char}}
Let $GH$ be a nilpotent group of order $p^\alpha q^\beta$. TFAE
\begin{enumerate}
\item $GH$ is abelian;
\item Every irreducible $FKGH$ glider $M$ is isomorphic to the tensor product $M_1 \ot M_2$ of an $FKG$- and an $FKH$-glider if and only if the associated triple $(a,b,c)$ of $M$ satisfies $a = bc$.
\end{enumerate}

 We started the theory of gliders for groups in order to obtain an extension of representation theory, that is, we look for new structural results involving classical representation theory but fitting in our new structure theory. This paper is a further step in this direction, opening new gateways for further research too. 

\section{Preliminaries}

Irreducible fragments appeared for the first time in \cite{EVO}. Since the authors slightly altered the definition of both fragments and glider representations (see \cite{CVo1},\cite{CVo}), they reintroduced the notion of irreducibility in \cite{CVo}. For convenience of the reader, we quickly recall some facts.\\

 A subfragment $N$ of $M$ is said to be trivial if either:\\
$T_1$. There is some $i \in \mathbb{N}$ such that $N_i = B(N)$ but $M_i \neq B(M)$.\\
$T_2$. There is a $i \in \mathbb{N}$ such that $N_i = 0$ but $M_i \neq 0$.\\
$T_3$. There exists a monotone increasing map $\alpha: \mathbb{N} \to \mathbb{N}$ such that $N_i = M_{\alpha(i)}$ and $\alpha(i) - j \geq \alpha(i - j)$ for all $j \leq i$ in $\mathbb{N}$.\\

A fragment (or glider representation) $M$ is called irreducible exactly when all subfragments are trivial. In the proof of \propref{anti-diagonal} below we need a result on irreducible fragments that already appeared in \cite{EVO}, but needs to be reproved with our improved definitions:  The group algebra filtrations $FKG$ we will be dealing with are right bounded filtrations. A filtration $FR$ is called right bounded if for some $n \in \mathbb{N},~ F_nR = F_{n+1}R = \ldots = R$. If $M \supset M_1 \supset \ldots \supset M_n \supset \ldots$ is an $FR$-fragment then for $N \geq n,~ F_nRM_N = RM_N \subset M_{N-n} \subset M$. For any $m,~M_m \supset M_{m+1} \supset \ldots$ is a subfragment of $M$ and if $L$ is a subfragment of $M_m$:
$$\begin{array}{ccccccc}
L & \supset & L_1 & \supset & L_2 & \supset & \ldots \\
\cap && \cap && \cap &&\\
M & \supset & M_1 & \supset & M_2 & \supset & \ldots
\end{array}$$
then $M \supset \ldots \supset M_{m-1} \supset L \supset L_1 \supset L_2 \supset \ldots$ is a subfragment of $M$. The result we need is

\begin{lemma}\lemlabel{irr1}
Let $FR$ be a right bounded filtered ring. If $M \supset M_1 \supset \ldots \supset M_n \supset \ldots$ is an irreducible $FR$-fragment then for any $m$, the fragment $M_m \supset M_{m+1} \supset \ldots$ is also irreducible.
\end{lemma}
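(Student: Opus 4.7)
The plan is to exploit the concatenation-of-subfragments construction described immediately before the lemma: given a subfragment $L \supset L_1 \supset L_2 \supset \ldots$ of $M_m$, I prepend the initial tail of $M$ to build a subfragment $\widetilde{L}$ of $M$ defined by $\widetilde{L}_i = M_i$ for $0 \leq i < m$ and $\widetilde{L}_{m+j} = L_j$ for $j \geq 0$. The excerpt already verifies that this is a genuine subfragment of $M$, so irreducibility of $M$ forces $\widetilde{L}$ to be trivial in one of the senses $T_1$, $T_2$, $T_3$. The plan is then to transfer each of the three cases back to $L$ as a subfragment of $M_m$.

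For $T_1$ and $T_2$, the witnessing index $i$ cannot lie in $\{0, 1, \ldots, m-1\}$, because there $\widetilde{L}_i = M_i$; the conditions $\widetilde{L}_i = B(\widetilde{L})$ or $\widetilde{L}_i = 0$ would then force $M_i = B(M)$ or $M_i = 0$, contradicting the second half of the triviality condition. So $i = m+j$ for some $j \geq 0$. Noting that $B(\widetilde{L}) = \bigcap_i \widetilde{L}_i = \bigcap_j L_j = B(L)$ (since $L_0 \subset M_m \subset M_k$ for every $k < m$, the prepended terms contribute nothing to the intersection) and $B(M_m) = \bigcap_j M_{m+j} = B(M)$, I can read off $L_j = B(L)$ with $(M_m)_j = M_{m+j} \neq B(M_m)$ for $T_1$, and analogously $L_j = 0$ with $(M_m)_j \neq 0$ for $T_2$.

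For $T_3$, suppose $\widetilde{L}_i = M_{\alpha(i)}$ with $\alpha(i) - k \geq \alpha(i-k)$ for all $k \leq i$. Taking $k = 1$ gives $\alpha(i) \geq \alpha(i-1) + 1$, so in particular $\alpha(m) \geq m$, and more generally $\alpha(m+j) \geq m$ for every $j \geq 0$. I then define $\beta : \mathbb{N} \to \mathbb{N}$ by $\beta(j) = \alpha(m+j) - m$. Then $L_j = \widetilde{L}_{m+j} = M_{\alpha(m+j)} = M_{m + \beta(j)} = (M_m)_{\beta(j)}$, and the required inequality $\beta(j) - k \geq \beta(j-k)$ is just the inequality for $\alpha$ at $(i,k) = (m+j, k)$, which is valid since $k \leq j \leq m+j$. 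Thus $L$ satisfies $T_3$ inside $M_m$.

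The only real obstacle is bookkeeping rather than mathematical depth: one must verify that the ``body'' $B$ commutes with truncation of the filtration ($B(M_m) = B(M)$ and $B(\widetilde{L}) = B(L)$), and that the shifts line up correctly when passing from $\alpha$ to $\beta$. Every substantive step reduces to the concatenation construction stated in the excerpt combined with the irreducibility of $M$.
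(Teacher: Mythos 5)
Your proof is correct and follows essentially the same route as the paper: both prepend the initial segment $M \supset \ldots \supset M_{m-1}$ to $L$ to obtain a subfragment of $M$, invoke irreducibility of $M$, and transfer each triviality type $T_1$, $T_2$, $T_3$ back to $L$ inside $M_m$ (with the same shifted map $\alpha(m+\cdot)-m$ in the $T_3$ case). The only cosmetic difference is that for $T_1$/$T_2$ you rule out a witnessing index $i<m$ by contradiction, whereas the paper shows that in that degenerate case $L=M_m$ is still trivial; both handle the case correctly.
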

\begin{proof}
If $L$ is a subfragment of $M_m$ then $M(L) : ~ M \supset \ldots \supset M_{m-1} \supset L \supset L_1 \supset \ldots$ is trivial in $M$ by irreducibilty. Suppose that $B(M(L)) = B(L) = M(L)_n$ but $M_n \neq B(M) = B(M_m)$ for some $n$, then if  $n \geq m$ we have $L_{n-m} = B(L)$ and $(M_m)_{n-m} = M_n \neq B(M_m)$, so $L$ is trivial in this case. If $ n < m$, then $B(L) = M_n \neq B(M)$, whence $B(L) = B(M)$ and $M_n = B(M)$. In this case, $M_m$ is the trivial chain $B(M) \supset B(M) \supset \ldots$ and $L = M_m$, so $L$ is again trivial. If $M(L)$ is trivial of type $T_2$, then $M(L)_n = 0,~ M_n \neq 0$, for some $n$. If $n \geq m$, then $L_{n-m} = 0$, but $(M_m)_{n-m} = M_n \neq 0$. Hence $L$ is trivial. If $ n < m$, then $M_m = 0$ and $L$ is trivial. Finally, if $M(L)_n = M_{\alpha(n)}$ for all $n$ for some $\alpha: \mathbb{N} \to \mathbb{N}$ satisfying the conditions of $T_3$, then $M(L)_{m+k} = M_{\alpha(m+k)} = L_{\alpha(m+k) - m}$ for all $k$. The monotonic ascending function $\alpha_m(k) = \alpha(m+k) - m$ satisfies the condition of $T_3$ and shows that $L$ is trivial in $M_m$. This covers all the cases.
\end{proof}

\section{Irreducible gliders}

Throughout we consider a finite group $G$ together with some (normalizing) chain of subgroups $e \subset G_1 \subset \ldots \subset G_d = G$, which yields a group algebra filtration $FKG$ on $KG$, $K$ some field, by setting $F_iKG = KG_i$ for $0 \leq i \leq d, F_{-n}KG = 0$ for $n > 0$ and $F_{n}KG = KG$ for $n \geq d$. Furthermore, we assume that $K = \ov{K}$ of characteristic zero.
In \cite{EVO} it is shown that the category of finitely generated glider representations for some chain $e \subset G_1 \subset \cdots \subset G_d =G$ of groups is semisimple, that is, each glider representation can be written as the fragment direct sum of irreducible gliders. For these particular algebra filtrations of length $d$ the irreducible gliders of length $e \leq d$ are of a very specific form. Indeed, we recall from \cite{CVo}
\begin{lemma}\lemlabel{el}
Let $FA$ be a finite algebra filtration on a $K$-algebra $A$ and let $M$ be a weakly irreducible $FA$-fragment such that $M \neq B(M)$, then there is an $e \in \mathbb{N}$ such that $M_e \neq B(M)$ and $e$ is maximal as such. For this $e$, we have that $M_i = F_{e-i}AM_e$, for $0 \leq i \leq e$.
\end{lemma}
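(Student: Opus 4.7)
The plan is to prove the two assertions in sequence. For the existence of a maximal $e$ with $M_e \neq B(M)$, the set $E := \{e \in \mathbb{N} : M_e \neq B(M)\}$ contains $0$ since $M = M_0 \neq B(M)$, so one must bound $E$ from above. The finiteness of $FA$, say $F_nA = A$ for $n \geq d$, yields $AM_{j+d} \subseteq M_j$ for all $j$ and in particular shows $B(M) = \bigcap_i M_i$ is an $A$-submodule. If $E$ were unbounded, then $E = \mathbb{N}$ by descent of the chain, and I would derive a contradiction with weak irreducibility by producing a subfragment of $M$ whose only possible triviality is of type $T_3$ yet for which no admissible shift $\alpha$ exists; the construction should exploit the identity $AM_{j+d} \subseteq M_j$ to produce a chain that cannot be realized by shifting $M$.

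For the equality $M_i = F_{e-i}AM_e$ on $[0,e]$, the inclusion $F_{e-i}AM_e \subseteq M_i$ is immediate from the fragment axiom. For the reverse I define the candidate
\[
N_i := \begin{cases} F_{e-i}A \cdot M_e & \text{if } 0 \leq i \leq e,\\ B(M) & \text{if } i > e, \end{cases}
\]
and verify that $N$ is a subfragment of $M$ by checking $F_jA \cdot N_i \subseteq N_{i-j}$ in the four cases determined by whether each of $i$ and $i-j$ is $\leq e$ or $> e$, using only the $A$-stability of $B(M)$, the containments $B(M) \subseteq M_e \subseteq F_{e-i}AM_e$, and $F_jF_{e-i}A \subseteq F_{e-i+j}A$. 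A short computation then yields $B(N) = B(M)$, since $N_i \supseteq B(M)$ for every $i$ and $N_i = B(M)$ for $i > e$.

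The next step is to rule out $T_1$- and $T_2$-triviality of $N$. For $i > e$ one has $M_i = B(M) = N_i$, so both conditions fail trivially. For $i \leq e$, note first that $M_e \neq 0$ (else $B(M) \subseteq M_e = 0$ would force $M_e = B(M)$, contradicting the choice of $e$), whence $N_i \supseteq M_e \supsetneq B(M)$ excludes $T_1$ (as $N_i \neq B(N) = B(M)$) and $N_i \supseteq M_e \neq 0$ excludes $T_2$. Weak irreducibility therefore forces $N$ to be trivial of type $T_3$: $N_i = M_{\alpha(i)}$ for some monotone $\alpha : \mathbb{N} \to \mathbb{N}$ with $\alpha(i) - j \geq \alpha(i - j)$ for $j \leq i$. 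Since $N_e = F_0AM_e = M_e$ while $M_{e+1} = B(M) \neq M_e$, one must have $\alpha(e) = e$; applying the $T_3$-inequality at $j = e - i$ then gives $\alpha(i) \leq i$, hence $M_{\alpha(i)} \supseteq M_i$, and combined with the subfragment inclusion $N_i \subseteq M_i$ this yields $N_i = M_i$ on $[0,e]$, as required.

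The main obstacle is the existence step. The obvious candidate subfragments used to bound $E$ (the constant chain at $B(M)$, or the truncation agreeing with $M$ up to step $k$ and equal to $B(M)$ afterward) are trivial of type $T_1$ and so do not contradict weak irreducibility. Ruling out $E = \mathbb{N}$ therefore requires a more delicate construction that genuinely uses the finite length $d$ of $FA$, most likely by leveraging $AM_i \subseteq M_{i-d}$ to exhibit, under the hypothesis $E = \mathbb{N}$, a subfragment whose type-$T_3$ presentation fails for $A$-submodule-generation reasons.
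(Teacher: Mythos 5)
First, a point of comparison: the paper does not prove \lemref{el} at all — it is quoted from \cite{CVo} as a known result — so there is no in-paper argument to measure yours against. Judged on its own terms, the second half of your proposal (the identity $M_i = F_{e-i}AM_e$) is correct and complete. The chain $N_i = F_{e-i}AM_e$ for $i \leq e$, extended by $B(M)$ beyond $e$, is indeed a subfragment; your four-case verification of $F_jA\,N_i \subseteq N_{i-j}$ is sound, and the exclusion of $T_1$ and $T_2$ using $N_i \supseteq M_e \supsetneq B(M)$ for $i \leq e$ and $N_i = M_i = B(M)$ for $i > e$ works. One small imprecision: from $N_e = M_e \neq B(M)$ and $M_k = B(M)$ for all $k > e$ you may only conclude $\alpha(e) \leq e$, not $\alpha(e) = e$ (the chain could be constant just below $e$); but $\alpha(e) \leq e$ is all you use, since it still yields $\alpha(i) \leq \alpha(e) - (e-i) \leq i$ and hence $M_i \subseteq M_{\alpha(i)} = N_i \subseteq M_i$.

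The genuine gap is the first assertion, the existence of a maximal $e$ with $M_e \neq B(M)$, which you explicitly leave as a plan rather than a proof. Since $E = \{i : M_i \neq B(M)\}$ is an initial segment of $\mathbb{N}$, the claim is precisely that the chain reaches $B(M)$ after finitely many steps, i.e.\ that a weakly irreducible fragment over a finite filtration has finite essential length — and this is exactly where weak irreducibility must do real work, because without it the statement is false (any strictly descending chain of subspaces over the trivial filtration is a counterexample). You correctly diagnose why the obvious subfragments fail: the constant chain at $B(M)$ and the truncations are $T_1$-trivial, and the shifts $N_i = M_{i+k}$ are $T_3$-trivial via $\alpha(i) = i+k$. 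But the replacement you gesture at — ``a subfragment whose only possible triviality is of type $T_3$ yet for which no admissible shift exists, exploiting $AM_{i+d} \subseteq M_i$'' — is never constructed, and the natural candidate $N_i = F_dA\,M_{i+d} = AM_{i+d}$ (which does satisfy $B(N) = B(M)$ and whose $T_1$- or $T_2$-triviality would contradict $E = \mathbb{N}$) can still be $T_3$-trivial, so it does not close the argument by itself. As written, half of the lemma — the half on which \lemref{dim}, \theref{pgroup} and all the essential-length bookkeeping in Section 3 depend — remains unproved; you would need either to supply the missing construction or to cite the finiteness of essential length from \cite{CVo} as the paper does.
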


Even in the case of a chain of length 1, i.e. $ e \subset G$, the question arises what the irreducible gliders of essential length 1 are. In this simple situation, such an irreducible glider takes the form
$$ \Omega = M \supset M_1 = Ka$$
such that $KGa = M$. By Maschke's theorem, the group algebra $KG$ is semisimple and decomposes into 
\begin{equation}\label{KG}
KG = M_{n_1}(K) \oplus \cdots \oplus M_{n_k}(K),
\end{equation}
where $k$ is the number of conjugacy classes of $G$. Moreover, there are (up to isomorphism) $k$ irreducible representations $V_i$ of dimension $n_i$ and in particular the order of the group $G$ equals $\sum_{i=1}^k n_i^2.$\\

Now suppose that $M \supset Ka$ is an irreducible glider, then $M = \oplus_{i=1}^k V_i^{m_i}$ and $a \in M$ is such that it generates $M$. To answer the question for which $M$ and $a$ we obtain irreducible gliders, we recall the notion of (left) annihilator ideals. Let $a \in M$, then 
$$\ann(a) = \{ x \in KG~|~ xa = 0\}.$$
This is a left ideal of the group algebra. The left ideal of annihilators of the whole of $M$,
$$\ann(M) = \{ x \in KG~|~xM = 0\}$$ 
is in fact a two-sided ideal and we have that 
$$ \ann(M) = \bigcap_{m \in M} \ann(m).$$
If $M = V_i$ is an irreducible module, then it follows immediately from \eqref{KG} that 
$$\ann(V_i) = M_{n_1}(K) \oplus \cdots M_{n_{i-1}}(K) \oplus  M_{n_{i+1}}(K)\oplus \cdots \oplus M_{n_k}(K),$$
whence $\dim_K(\ann(V_i)) = |G| - n_i^2$.
If $U$ is a $K$-vector space, we denote by $\mathbb{P}U$ the associated projective space. The following lemma will be crucial
\begin{lemma}\lemlabel{Schur}
Let $U \in \Irr(G), u, u' \in U$. Then $\ann(u) = \ann(u') \Leftrightarrow \ov{u} = \ov{u'} \in \mathbb{P}U$.
\end{lemma}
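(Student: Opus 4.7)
The plan is to prove the nontrivial $(\Rightarrow)$ direction via Schur's lemma applied to $U$, using the irreducibility of $U$ to turn each nonzero vector into a cyclic generator.

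The easy direction: if $\bar u = \bar{u'}$ in $\mathbb{P}U$, then $u' = \lambda u$ for some $\lambda \in K^\ast$, and since $xu = 0$ iff $\lambda xu = 0$ iff $x u' = 0$, we immediately get $\ann(u) = \ann(u')$. (If one of $u, u'$ is zero, the statement is vacuous as written, since $\mathbb{P}U$ only parametrizes nonzero vectors; so we may assume $u, u' \neq 0$.)

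For the forward direction, suppose $\ann(u) = \ann(u')$ with $u, u' \neq 0$. Since $U \in \Irr(G)$ is simple, the $KG$-submodule $KGu \subset U$ is nonzero, hence equals $U$; likewise $KGu' = U$. Consider the two $KG$-linear surjections
\begin{equation*}
\phi: KG \longrightarrow U, \quad x \mapsto xu, \qquad \psi: KG \longrightarrow U, \quad x \mapsto xu'.
\end{equation*}
By definition $\ker \phi = \ann(u)$ and $\ker \psi = \ann(u')$, and both factor through $KG$-module isomorphisms $\bar\phi, \bar\psi: KG/\ann(u) = KG/\ann(u') \xrightarrow{\sim} U$. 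Composing gives a $KG$-linear automorphism $\bar\psi \circ \bar\phi^{-1} \in \End_{KG}(U)$ that sends $u = \bar\phi(1 + \ann(u))$ to $\bar\psi(1 + \ann(u')) = u'$.

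Now invoke Schur's lemma: since $U$ is irreducible and $K = \overline{K}$, we have $\End_{KG}(U) = K \cdot \mathrm{id}_U$. Therefore $\bar\psi \circ \bar\phi^{-1} = \lambda \cdot \mathrm{id}_U$ for some $\lambda \in K$, and evaluating at $u$ yields $u' = \lambda u$. Since $u' \neq 0$ we must have $\lambda \in K^\ast$, so $\bar u = \bar{u'} \in \mathbb{P}U$, as required. The only real content is Schur's lemma applied over the algebraically closed field $K$; there is no significant obstacle once the two cyclic quotients are identified.
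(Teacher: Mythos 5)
Your proof is correct and takes essentially the same route as the paper: the paper likewise observes $U = KGu = KGu'$, defines the map $xu \mapsto xu'$ (well-defined precisely because the two annihilators coincide), and concludes by Schur's lemma over the algebraically closed field $K$. Your version merely spells out the identification of the two cyclic quotients $KG/\ann(u) = KG/\ann(u')$ more explicitly.
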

\begin{proof}
The `only if' direction is clear, since $K \subset Z(KG)$. Suppose that the left annihilators are equal. Since $U$ is simple, we have that $U = KGu = KGu'$. Define a morphism 
$$\varphi: KGu \to KGu',~ xu \mapsto xu'.$$
This map is well-defined exactly because the annihilators are equal. Schur's lemma then yields the result.
\end{proof}

We start by looking at gliders $M \supset Ka$ for which $M = V^m$, with $V$ an $n$-dimensional simple representation. To begin, consider $V \oplus V$. For every $a = v_1 + v_2 \in V \oplus V$, we obtain a submodule $KGa \subset V \oplus V$. By the uniqueness of decomposition, this $KGa$ is isomorphic to $V$ or $V \oplus V$. 
\begin{proposition}\proplabel{twee}
Let $V \in \Irr(G)$ and suppose that $a = v_1 + v_2 + \ldots + v_m \in V^m$ is such that $KGa \cong V \subset V^m$, then all $\ov{v_i}$ are equal in $\mathbb{P}V$.
\end{proposition}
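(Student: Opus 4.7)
The plan is to exploit \lemref{Schur} together with a standard annihilator-intersection argument. Writing $a = (v_1, \ldots, v_m) \in V \oplus \cdots \oplus V$, an element $x \in KG$ kills $a$ precisely when it kills each component, so
\[
\ann(a) \;=\; \bigcap_{i=1}^m \ann(v_i),
\]
and in particular $\ann(a) \subseteq \ann(v_i)$ for every $i$.

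This inclusion yields, for each $i$, a surjection $KGa \cong KG/\ann(a) \twoheadrightarrow KG/\ann(v_i) \cong KGv_i \subseteq V$. By hypothesis $KGa \cong V$, which is simple, so for each $i$ the cyclic submodule $KGv_i$ is either zero (exactly when $v_i = 0$) or simple and isomorphic to $V$. In the latter case the surjection is a surjection between two simple modules of the same isomorphism type, hence forced to be an isomorphism; consequently $\ann(v_i) = \ann(a)$. Since $KGa \cong V \neq 0$ forces at least one $v_i$ to be nonzero, we conclude that all nonzero $v_i$ share one and the same left annihilator, namely $\ann(a)$.

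Applying \lemref{Schur} to these nonzero elements of $V$ then yields $\ov{v_i} = \ov{v_j}$ in $\mathbb{P}V$ for all indices $i,j$ with $v_i, v_j \neq 0$, which is the claim. I do not expect any real obstacle: the only point to keep track of is the possible vanishing of some of the $v_i$'s, but these are harmless, since $\ann(0) = KG$ trivially contains $\ann(a)$ and contributes nothing to the intersection above.
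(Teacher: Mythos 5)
Your proof is correct, and it takes a genuinely different (and arguably cleaner) route than the paper's. The paper first treats the case $m=2$ by contradiction: it picks $x \in \ann(v_1)$ with $xv_2 \neq 0$, notes that $xa = xv_2$ then generates the whole summand $0 \oplus V$ inside $KGa \cong V$, which is impossible, and concludes $\ann(v_1) = \ann(v_2)$; the general case is then reduced to $m=2$ by projecting $V^m$ onto each pair of factors. You instead use the identity $\ann(a) = \bigcap_i \ann(v_i)$ to produce, for each $i$, a surjection of cyclic modules $KGa \cong KG/\ann(a) \twoheadrightarrow KG/\ann(v_i) \cong KGv_i$; since $KGa$ is simple by hypothesis, this surjection is an isomorphism whenever $v_i \neq 0$, which forces $\ann(v_i) = \ann(a)$ for all such $i$. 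Both arguments then close with \lemref{Schur}. What your version buys is a uniform treatment of all $m$ at once, with no induction and no projections; what it loses is nothing of substance. The one caveat you flag yourself --- that $\ov{v_i}$ is undefined in $\mathbb{P}V$ when $v_i = 0$, so the conclusion really concerns the nonzero components --- is equally present in the paper, which silently reduces to the case where all $v_i \neq 0$, so it is not a gap in your argument relative to the intended statement.
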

\begin{proof}
We consider first the case $m=2$, so suppose that $\ann(v_1) \neq \ann(v_2)$ and take some $x \in \ann(v_1)$, which does not annihilate $v_2$. Then $ 0 \neq xa = xv_2 \subset 0 \oplus V$. But since $V = KGxv_2$, we obtain that $0 \oplus V \subset KGa$, a contradiction. So both annihilators are equal and the result follows from \lemref{Schur}. Now consider the general case $m>2$. If $v_1 = 0$, then $KGa \subset V^{m-1}$, so we may assume that all $v_i \neq 0$. For any $i \neq j$, consider the projection of $V^m$ onto the $i$-th and $j$-th factor. Then $KGa$ is projected to $KG(v_i + v_j)$. Since $v_i + v_j$ is not zero in $V \oplus V$, we have that $KG(v_i+v_j) \cong V$ and the case $m=2$ then shows that $\ov{v_i} = \ov{v_j}$. The result follows.
\end{proof}
\begin{proposition}\proplabel{basis}
Let $V \in \Irr(G)$ be $n$-dimensional and let $\{v_1, \ldots, v_n\}$ be a $K$-basis for $V$. Then for $a = v_1 + \cdots + v_n \in V^{n}$ we have that $KGa = V^{n}$.
\end{proposition}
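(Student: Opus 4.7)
The plan is to identify $KGa$ via its annihilator ideal and then finish with a dimension count against $V^n$. Consider the $KG$-linear map
$$\varphi : KG \longrightarrow V^n, \qquad x \mapsto xa,$$
whose image is $KGa$ and whose kernel is $\ann(a) = \bigcap_{i=1}^n \ann(v_i)$. The idea is to show this intersection of annihilators is as small as possible, namely equal to the two-sided ideal $\ann(V)$, which will force $KGa$ to have the maximal dimension compatible with sitting inside $V^n$.

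The key observation is that the inclusion $\ann(V) \subseteq \bigcap_{i=1}^n \ann(v_i)$ is tautological, while the reverse inclusion follows from the hypothesis that $\{v_1,\ldots,v_n\}$ is a $K$-basis of $V$: if $x \in KG$ satisfies $xv_i = 0$ for every basis vector $v_i$, then $x$ acts as the zero endomorphism on the whole of $V$, so $x \in \ann(V)$. Consequently $\ann(a) = \ann(V)$ and the first isomorphism theorem gives $KGa \cong KG/\ann(V)$ as $KG$-modules.

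By the Artin–Wedderburn decomposition \eqref{KG}, together with the computation of $\ann(V_i)$ recalled just before \lemref{Schur}, one has $KG/\ann(V) \cong M_n(K) \cong V^n$, so $\dim_K KGa = n^2$. Since $KGa$ is a submodule of $V^n$ with $\dim_K V^n = n^2$, the two coincide: $KGa = V^n$.

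There is no real obstacle here; the argument is essentially the observation that a spanning set of $V$ detects the annihilator of $V$, combined with the Wedderburn bookkeeping already set up in the paragraphs preceding \lemref{Schur}. The only thing to be careful about is distinguishing isomorphism from equality at the very end, which is handled automatically by the dimension match inside the ambient semisimple module $V^n$.
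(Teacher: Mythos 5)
Your proof is correct and follows essentially the same route as the paper's: both identify $\ann(a)=\bigcap_i\ann(v_i)=\ann(V)$ using that the $v_i$ span $V$, and then conclude $\dim_K KGa = n^2 = \dim_K V^n$ by a dimension count (the paper phrases this as a contradiction with $KGa\cong V^m$, $m<n$, while you invoke $KG/\ann(V)\cong M_n(K)\cong V^n$ directly, which is only a cosmetic difference). No gaps.
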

\begin{proof}
We have that $\ann(V) = \cap_{i=1}^n\ann(v_i).$ If $KGa \subset V^{n}$ is a proper submodule, it follows that $KGa \cong V^{m}$ for some $m < n$. Hence its $K$-dimension is $nm$. Consider the following short exact sequence of $K$-vectorspaces
$$ 0 \rightarrow \ann(a) \rightarrow KG \rightarrow KGa \rightarrow 0$$
We have that 
$$\ann(a) = \bigcap_{i=1}^n \ann(v_i) = \ann(V),$$
hence its $K$-dimension is $|G| - n^2$. It follows that the $K$-dimension of $KGa$ must be $n^2$, a contradiction.
\end{proof}
\begin{remark}
Let $\{v_1,\ldots,v_n\}$ be a $K$-basis for $V$. The short exact sequence
$$ 0 \to \ann(v_i) \to KG \to KGv \to 0$$
shows that $\dim(\ann(v_i)) = |G| -n$. This also follows from the decomposition of the group algebra $KG$ into a direct product of matrix algebras. Indeed, for $v_1$, we are searching for matrices in $M_n(K)$ that kill $(1,0,\ldots,0)^t$ and its dimension as a $K$-space is exactly $n^2 - n$. This reasoning allows to deduce that
$$\dim(\bigcap_{i=1}^m \ann(v_i)) = |G| - mn.$$
\end{remark}
As a corollary, we obtain a generalization of \propref{basis}
\begin{corollary}\corlabel{basis}
Let $V \in \Irr(G)$ be $n$-dimensional and let $\{v_1, \ldots, v_m\}$ be $m \leq n$ linear independent vectors in $V$ . Then for $a = v_1 + \cdots + v_m \in V^{m}$ we have that $KGa = V^{m}$.
\end{corollary}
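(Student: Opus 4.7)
The plan is to adapt the proof of \propref{basis} essentially verbatim, using the dimension formula furnished by the remark preceding the corollary. Because the vectors $v_i$ sit in distinct direct summands of $V^m$, for any $x \in KG$ we have $xa = xv_1 + \cdots + xv_m = 0$ in $V^m$ if and only if $xv_i = 0$ for every $i$. Consequently
\[
\ann(a) = \bigcap_{i=1}^m \ann(v_i).
\]
The remark gives $\dim_K\bigl(\bigcap_{i=1}^m \ann(v_i)\bigr) = |G| - mn$ precisely when the $v_i$ are linearly independent, which is our hypothesis.

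Next I would feed this into the short exact sequence of $K$-vector spaces
\[
0 \longrightarrow \ann(a) \longrightarrow KG \longrightarrow KGa \longrightarrow 0,
\]
which yields $\dim_K(KGa) = |G| - (|G| - mn) = mn$. On the other hand $KGa$ is a $KG$-submodule of $V^m$, and by the uniqueness of isotypic decomposition it must be isomorphic to $V^k$ for some $0 \leq k \leq m$. Comparing $K$-dimensions forces $kn = mn$, so $k = m$ and hence $KGa = V^m$.

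There is essentially no obstacle here beyond the bookkeeping of the two dimension counts; the content of the corollary is that the hypothesis of linear independence of the $v_i$ is exactly what the remark needs in order to reduce $\dim\bigl(\bigcap_i \ann(v_i)\bigr)$ from the naive bound to $|G| - mn$. The case $m = n$ recovers \propref{basis}, and the cases $m < n$ follow identically since the argument only uses the codimension in a single matrix block $M_n(K)$ of $KG$ obtained by intersecting the annihilators of $m$ independent vectors of $V$.
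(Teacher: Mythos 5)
Your proposal is correct and is precisely the argument the paper intends: the corollary is stated without proof as a consequence of the preceding remark's dimension count $\dim\bigl(\bigcap_{i=1}^m \ann(v_i)\bigr) = |G| - mn$, fed into the same short exact sequence used for \propref{basis}. Your write-up just makes that implicit argument explicit, so there is nothing to add.
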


We denote by $\pi_i$ the projection from $V^{n}$ onto the $i$-th factor and by $\pi_{\hat{i}}$ the projection onto $V^{n-1}$ which misses the $i$-th factor. Moreover, we denote by $V_i$ the $i$-th component in $V^n$, that is, $V_i = 0 \oplus \cdots \oplus 0 \oplus V \oplus 0 \oplus \cdots \oplus 0$, where the $V$ is in the $i$-th place.
\begin{lemma}
Let $V \in \Irr(G)$ and suppose that $a = v_1 + v_2 \in V \oplus V$. Then $KGa \cong \pi_i(KGa)$ if and only if $KGa \cong V$ for $i=1,2$.
\end{lemma}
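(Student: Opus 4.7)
The plan is to combine two general observations. First, since each $\pi_i$ is a $KG$-module map, $\pi_i(KGa)$ is a $KG$-submodule of the simple module $V$, hence equals either $0$ or $V$. Second, $KGa$ sits inside $V \oplus V$, so by uniqueness of the isotypic decomposition $KGa \cong V^m$ with $m \in \{0,1,2\}$; since $a \neq 0$ we have $m \in \{1,2\}$, so the real question is whether $m = 1$ and whether $\pi_i$ restricts non-trivially.

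For the ``only if'' direction, suppose $KGa \cong \pi_i(KGa)$. The right hand side lies in $\{0, V\}$, while the left hand side is non-zero, so we must have $\pi_i(KGa) = V$ and consequently $KGa \cong V$.

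For the converse, assume $KGa \cong V$. I would invoke \propref{twee} applied to $a = v_1 + v_2 \in V \oplus V$ to conclude $\ov{v_1} = \ov{v_2}$ in $\mathbb{P}V$ (a step which tacitly uses that both $v_i$ are non-zero, as is built into the statement of \propref{twee}). Hence $v_2 = \lambda v_1$ for some $\lambda \in K^*$, and $KGa = \{(xv_1, \lambda xv_1) : x \in KG\}$ is a twisted diagonal copy of $V$ inside $V \oplus V$. The restriction of $\pi_i$ to $KGa$ is a $KG$-linear surjection onto $KGv_i = V$, and its kernel is trivial since $xv_1 = 0$ (respectively $\lambda xv_1 = 0$) forces $xa = 0$. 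Therefore $\pi_i$ restricts to a $KG$-isomorphism $KGa \cong V$.

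No serious obstacle arises once \propref{twee} is in hand; the lemma is essentially a reformulation of the projective coincidence $\ov{v_1} = \ov{v_2}$ in terms of the component projections. The only point requiring care is the implicit non-vanishing hypothesis on the $v_i$: if say $v_2 = 0$, then $KGa \cong V$ but $\pi_2(KGa) = 0$, so the converse direction literally requires both summands non-zero, matching the hypothesis already imposed by \propref{twee}.
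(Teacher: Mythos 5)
Your proof is correct and takes essentially the same route as the paper's own (very terse) argument: the forward direction is immediate because $\pi_i(KGa)$ is a submodule of the simple module $V$, and the converse invokes \propref{twee} to see that the embedding of $KGa$ in $V\oplus V$ is a (twisted) diagonal, on which each $\pi_i$ restricts to an isomorphism. Your closing remark about the degenerate case $v_2=0$ correctly identifies an implicit nonvanishing hypothesis inherited from \propref{twee}, but it does not change the substance of the argument.
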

\begin{proof}
If $KGa = V \oplus V$, then $\pi_i(KGa) = V$. If $KGa \cong V$, then the embedding is diagonal by \propref{twee}.
\end{proof}
\begin{lemma}\lemlabel{projection}
Let $V \in \Irr(G)$ and suppose that $a = v_1 + \cdots + v_n$. Then $v_i \in <v_j, j\neq i>$ if and only if $\pi_{\hat{i}}(KGa) \cong KGa$.
\end{lemma}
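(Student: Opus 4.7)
The projection $\pi_{\hat{i}} \colon V^n \to V^{n-1}$ is $KG$-linear, so its restriction $\pi_{\hat{i}}|_{KGa}\colon KGa \twoheadrightarrow \pi_{\hat{i}}(KGa)$ is a $KG$-surjection of finite-dimensional semisimple modules with kernel $KGa \cap V_i$. Since both source and target are $V$-isotypic, the abstract condition ``$\pi_{\hat{i}}(KGa) \cong KGa$ as $KG$-modules'' is the same as asking that they share the same $K$-dimension, equivalently that the surjection $\pi_{\hat{i}}|_{KGa}$ itself is injective. I therefore plan to replace the statement by the cleaner equivalence
\[
v_i \in <v_j,\, j \neq i> \iff KGa \cap V_i = 0.
\]

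The direction ``$\Rightarrow$'' is a one-liner: if $v_i = \sum_{j \neq i} c_j v_j$, then for any $x \in KG$ with $xa \in V_i$ one has $xv_j = 0$ whenever $j \neq i$, and therefore $xv_i = \sum_{j \neq i} c_j\, xv_j = 0$, so $xa = 0$. For the converse I argue the contrapositive: assume $v_i \notin <v_j,\, j \neq i>$. Here the Artin--Wedderburn block $KG/\ann(V) \cong M_n(K)$ from the decomposition \eqref{KG}, exploited already in the remark after \propref{basis}, lets me transport the problem to linear algebra on $V \cong K^n$. Because $v_i$ is linearly independent of the other $v_j$, a standard extension argument produces $A \in M_n(K)$ vanishing on $<v_j,\, j \neq i>$ with $Av_i \neq 0$; lifting $A$ to $x \in KG$ then yields $xa = xv_i \in V_i\setminus\{0\}$, so $KGa \cap V_i \neq 0$.

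The only step that genuinely requires attention is this converse, where a linear-dependence hypothesis about vectors in $V$ must be upgraded to the existence of a group algebra element with a prescribed annihilation pattern. The Wedderburn identification handles this at once, and introduces no tools beyond those already in use in the section.
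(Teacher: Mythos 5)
Your proof is correct, and its skeleton matches the paper's: both reduce the isomorphism condition to the vanishing of $\Ker(\pi_{\hat{i}}) \cap KGa = V_i \cap KGa$ (legitimate, since source and target are $V$-isotypic so isomorphism is detected by dimension), both settle the direction ``$v_i \in <v_j, j\neq i> \Rightarrow V_i \cap KGa = 0$'' by the same one-line computation, and both prove the converse by exhibiting an $x \in KG$ with $xv_j = 0$ for all $j \neq i$ but $xv_i \neq 0$. Where you genuinely diverge is in how that $x$ is produced. The paper argues by contradiction inside the annihilator formalism of \lemref{Schur}: if $\bigcap_{j\neq i}\ann(v_j) \subseteq \ann(v_i)$, there is a well-defined morphism $KGv_i \to \bigoplus_{j\neq i}KGv_j$, $v_i \mapsto (v_j)_{j\neq i}$, and Schur's lemma forces each $v_j$ to be a scalar multiple of $v_i$, contradicting $v_i \notin <v_j, j\neq i>$. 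You instead use that \eqref{KG} makes $KG \to {\rm End}_K(V)$ surjective, build a linear map killing $<v_j, j\neq i>$ but not $v_i$, and lift it. Your route is more direct and quietly handles the edge case the paper's Schur argument glosses over (if all $v_j$ with $j\neq i$ vanish, the scalars $\lambda_j$ are all zero and the displayed contradiction needs a separate remark); the paper's route buys uniformity with the surrounding lemmas, which are all phrased in terms of annihilator ideals. One notational caution: in the statement $n$ counts the summands of $a$, not $\dim V$, so the Wedderburn block should be written $M_{\dim V}(K)$ rather than $M_n(K)$, and $V \cong K^{\dim V}$ accordingly.
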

\begin{proof}
If $\pi_{\hat{i}}(KGa) \not\cong KGa$, then $\Ker(\pi_{\hat{i}}) \cap KGa = V_i \cap KGa \neq 0$. Hence, we find some $x \in KG$ such that $(0,\ldots,0,xv_i,0,\ldots,0) \in KGa$. If however, $v_i$ is a linear combination of the $v_j$, then any $x$ that annihilates all $v_j$, also kills $v_j$, hence $v_i \notin <v_j, j \neq i>$. Conversely, suppose that $v_i \notin <v_j, j \neq i>$. We have that $ 0 \neq \ann(V) \subseteq \bigcap_{j \neq i} \ann(v_j)$. If $\ann(v_i) = \bigcap_{j \neq i} \ann(v_j)$ then we can define a morphism 
$$\varphi : KGv_i \to KGv_1 \oplus \cdots \oplus \hat{KGv_i} \oplus KGv_{i+1} \oplus \cdots \oplus KGv_n,~ v_i \mapsto (v_1,\ldots, \hat{v_i}, \ldots, v_n).$$
By Schur's lemma we know that $\varphi = (\lambda_1,\ldots,\lambda_n)$ for some $\lambda_j \in K$. Hence, $v_i = \lambda_jv_j$, a contradiction. So $\ann(v_i) \neq \bigcap_{j \neq i} \ann(v_j)$ and we find an element $x \in KG$ that annihilates all $v_j$, except for $v_i$. The element $xa$ then sits in the kernel of $\pi_{\hat{i}} \cap KGa$.
\end{proof}
In particular, if $V$ is $n$-dimensional, and we work in $V^{n+1}$, then for any choice of $n+1$ nonzero vectors in $V$, we have that $\pi_{\hat{1}}(KGa) \cong KGa$. Since the projection is inside $V^n$, $KGa$ can at most be $V^n$. 
\begin{theorem}\thelabel{1}
Let $V \in \Irr(G)$ and let $v_1, \ldots, v_n \in V$. If $a = v_1 + \cdots + v_n \in V^n$, then $KGa \cong V^m$ for some $m\leq n$ if and only if $ \dim(<v_1,\ldots,v_n>)= m$.
\end{theorem}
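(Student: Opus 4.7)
The plan is to reduce both implications to a single computation: if $d = \dim_K\langle v_1,\ldots,v_n\rangle$, then $KGa\cong V^d$. Once this is established, the ``iff'' is automatic, because $V^m\cong V^d$ forces $m=d$ (either by Krull--Schmidt in the semisimple category $KG\mathrm{-mod}$, or simply by comparing $K$-dimensions $mn=dn$).

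To perform the computation, I would first note that permuting the summands of $a$ is induced by the corresponding permutation of factors of $V^n$, which is a $KG$-module automorphism, so it does no harm to assume that $v_1,\ldots,v_d$ are linearly independent and span $\langle v_1,\ldots,v_n\rangle$. Then, for $j>d$, the vector $v_j$ lies in $\langle v_1,\ldots,v_d\rangle\subseteq \langle v_1,\ldots,v_{j-1}\rangle$, i.e.\ in the span of the remaining components of $a^{(j)}:=v_1+\cdots+v_j\in V^j$. This is exactly the hypothesis of \lemref{projection} (applied with $i=j$), so $\pi_{\hat{j}}(KGa^{(j)})\cong KGa^{(j)}$. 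Since $\pi_{\hat{j}}(a^{(j)})=a^{(j-1)}\in V^{j-1}$, this gives $KGa^{(j-1)}\cong KGa^{(j)}$.

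Iterating this from $j=n$ down to $j=d+1$ yields $KGa=KGa^{(n)}\cong KGa^{(d)}$, where now $a^{(d)}=v_1+\cdots+v_d$ is a sum of $d$ linearly independent vectors in $V^d$. At this point \corref{basis} applies directly and gives $KGa^{(d)}=V^d$, hence $KGa\cong V^d$, as required.

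I do not anticipate a real obstacle; the only point requiring some care is the bookkeeping in the iteration, namely that after dropping the $j$-th coordinate, the hypothesis of \lemref{projection} is still satisfied for the next step. This is immediate from the observation that removing a dependent vector from a tuple does not change the dimension of its $K$-span, so $v_1,\ldots,v_d$ remain a basis of the span of $v_1,\ldots,v_{j-1}$ at every stage, keeping $v_{j-1}$ dependent on the earlier vectors as long as $j-1>d$.
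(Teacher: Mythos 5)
Your argument is correct, and it is a genuine streamlining of the paper's proof rather than a reproduction of it. The converse implication in the paper is exactly your computation: reorder so that $v_1,\ldots,v_m$ are independent, peel off the dependent coordinates with \lemref{projection}, and finish with \corref{basis}. What you do differently is observe that this single computation already pins down the isomorphism type of $KGa$ as $V^{d}$ with $d=\dim_K\langle v_1,\ldots,v_n\rangle$, so the forward implication costs nothing: $KGa\cong V^m$ forces $m=d$ by comparing $K$-dimensions (or Krull--Schmidt). The paper instead proves the forward direction separately, by induction on $m$, running through the successive projections $\pi_{\hat{1}},\pi_{\hat{2}},\ldots$ and a case analysis on whether each projection changes the isomorphism type, with the base case handled by \propref{twee}. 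Both proofs rest on the same two ingredients (\lemref{projection} and \corref{basis}); yours eliminates the induction and the case analysis entirely, at the price of making explicit the (easy) uniqueness of the multiplicity $m$. Your bookkeeping remark is also sound: since $v_j\in\langle v_1,\ldots,v_d\rangle\subseteq\langle v_1,\ldots,v_{j-1}\rangle$ for every $j>d$, the hypothesis of \lemref{projection} persists at each stage of the descent from $n$ to $d$. The only cosmetic caveat is the degenerate case $d=0$ (all $v_i=0$, so $KGa=0$), which neither you nor the paper addresses, but which is harmless.
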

\begin{proof}
Suppose that $V$ is $l$-dimensional. By the remarks above the theorem, we may reduce to $n \leq l$.  The cases $(1,n)$ for all $n \leq l$ follow from \propref{twee}, so we proceed by induction on $m$.  Suppose that the result holds for $m-1$ and suppose that $KGa \cong V^m \subset V^n$. Consider the projection $\pi_{\hat{1}}(KGa) = KG(v_2 + \cdots + v_n)$. If this is isomorphic to $V^{m-1}$, then by induction we know that $\dim(<v_2,\ldots,v_n>) = m-1$ and the claim follows by \lemref{projection}. If however, $\pi_{\hat{1}}(KGa) \cong KGa$, then $v_1 \in <v_2,\ldots, v_n>$. Then look at $\pi_{\hat{2}}(\pi_{\hat{1}}(KGa))$. If this is not isomorphic to $KGa$, then it is isomorphic to $V^{m-1}$ and by induction we have that $\dim(<v_1,\ldots,v_n>) = \dim(<v_2,\ldots,v_n>) = m$. If however, $\pi_{\hat{2}}(\pi_{\hat{1}}(KGa)) \cong KGa$, then we can remove $v_2$ as before. If $\pi_{\hat{n}} \circ \cdots \circ \pi_{\hat{1}}(KGa) \cong KGa$, then $KGa \cong V$ and \propref{twee} entails that $\dim(<v_1,\ldots,v_n>) = 1$.\\
Conversely, suppose that $\dim(<v_1,\ldots,v_n>)= m$. Up to reordering, we may assume that $\{v_1,\ldots, v_m\}$ are linearly independent. By \lemref{projection}, we obtain that $\pi_{\hat{n}} \circ \cdots \circ \pi_{\hat{m+1}}(KGa) \cong KGa$, and the result follows from \corref{basis}.
\end{proof}
\begin{corollary}
A glider of the form $V^m \supset Ka$ with $V \in \Irr(G)$ $n$-dimensional and $ m > n$ is never irreducible.
\end{corollary}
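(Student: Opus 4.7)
The plan is to deduce this as a one-line consequence of Theorem~\theref{1}, combined with the description of irreducible gliders of essential length $1$ recalled at the start of Section~3. The whole corollary should collapse to: ``$a$ lies in $V^m$, but $a$ can only generate at most $V^n$.''

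Concretely, I argue by contradiction. Suppose $M \supset Ka$ is an irreducible glider with $M = V^m$, $V \in \Irr(G)$ of dimension $n$, and $m > n$. In the setting of the chain $e \subset G$, Lemma~\lemref{el} (applied with essential length $1$) forces $M = F_1KG\cdot Ka = KGa$, so $KGa = V^m$. Now decompose $a = v_1 + \cdots + v_m$ according to the canonical splitting $V^m = V_1 \oplus \cdots \oplus V_m$. Every $v_i$ lies in the $n$-dimensional space $V$, and since $m > n$ the family $\{v_1,\ldots,v_m\}$ is $K$-linearly dependent, hence $\dim_K\langle v_1,\ldots,v_m\rangle \leq n < m$.

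Applying Theorem~\theref{1} to the element $a \in V^m$ then gives $KGa \cong V^k$ with $k = \dim_K\langle v_1,\ldots,v_m\rangle \leq n < m$. By uniqueness of the isotypic decomposition, a submodule of $V^m$ isomorphic to $V^k$ with $k < m$ is necessarily proper, contradicting $KGa = V^m$. The only point worth spelling out is why irreducibility forces $KGa$ to coincide with $V^m$ in the first place, and this is precisely the content of Lemma~\lemref{el} in the length-one chain setting; once that normalization is available, Theorem~\theref{1} does all of the work, and there is no further case to treat.
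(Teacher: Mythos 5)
Your argument is correct and is exactly the reasoning the paper intends (the corollary is stated without proof as an immediate consequence of Theorem~\ref{the:1}): since $m>n=\dim_K V$ forces $\dim_K\langle v_1,\ldots,v_m\rangle\leq n<m$, Theorem~\ref{the:1} gives $KGa\cong V^{k}$ with $k<m$, so $KGa\subsetneq V^m$, contradicting the normalization $M=KGa$ required for irreducibility. Your use of Lemma~\ref{lem:el} to justify that normalization is the same justification the paper gives at the start of Section~3.
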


The general situation where $M = \oplus_i V_i^{m_i}$ does not cause any further difficulties. Indeed, we have 
\begin{proposition}
Let $V_1,\ldots,V_n \in \Irr(G)$ be non-isomorphic irreducible $G$-representations. If $a = v_1 + \ldots + v_n \in \bigoplus_{i=1}^n V_i $ is such that $KGa \cong \bigoplus_{j \in I}V_j \subset \bigoplus_{i=1}^n V_i $ for some proper subset $I \subset \{1,\ldots,n\}$, then $v_k=0$ for all $k \notin I$.
\end{proposition}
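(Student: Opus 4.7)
The plan is to exploit Schur's lemma together with the hypothesis that the $V_i$ are pairwise non-isomorphic, by testing $KGa$ against each irreducible summand through the canonical projections. For each index $k \in \{1,\ldots,n\}$, the projection $\pi_k : \bigoplus_{i=1}^n V_i \to V_k$ is a $KG$-module homomorphism, and its restriction to $KGa$ yields a $KG$-linear map $\pi_k|_{KGa} : KGa \to V_k$ which sends $a$ to $v_k$. Thus it suffices to show $\pi_k|_{KGa} = 0$ whenever $k \notin I$.

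To that end, I would compute $\Hom_{KG}(KGa, V_k)$ using the hypothesis $KGa \cong \bigoplus_{j \in I} V_j$. Since $\Hom_{KG}$ is additive in the first argument, this equals $\bigoplus_{j \in I} \Hom_{KG}(V_j, V_k)$. By Schur's lemma, $\Hom_{KG}(V_j, V_k) = 0$ whenever $V_j \not\cong V_k$, and by the pairwise non-isomorphy hypothesis this is automatic as soon as $j \neq k$. For $k \notin I$ and every $j \in I$ one has $j \neq k$, so every summand vanishes. Hence $\Hom_{KG}(KGa, V_k) = 0$, which forces $\pi_k|_{KGa} = 0$ and in particular $v_k = \pi_k(a) = 0$.

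There is really no hard step: the argument is essentially the uniqueness of the isotypic decomposition inside $\bigoplus V_i$, phrased at the level of the cyclic submodule $KGa$. The only subtlety one has to watch is that the isomorphism $KGa \cong \bigoplus_{j \in I} V_j$ is given only abstractly and need not respect the given embedding in $\bigoplus_{i=1}^n V_i$; however the computation of $\Hom_{KG}(KGa, V_k)$ depends only on the isomorphism class of $KGa$, so this causes no issue.
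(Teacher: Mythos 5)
Your proof is correct and is essentially the paper's argument: the paper composes the isomorphism $\bigoplus_{j\in I}V_j \to KGa$ with the inclusion into $\bigoplus_i V_i$ and the projection onto $V_k$, and invokes Schur's lemma to conclude that this composite (which sends a preimage of $a$ to $v_k$) must vanish. Your reformulation via $\Hom_{KG}(KGa,V_k)=0$ is the same computation, and you correctly note the only point needing care, namely that the isomorphism is abstract but the Hom-space depends only on the isomorphism class.
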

\begin{proof}
Let $\varphi$ be an isomorphism between $\bigoplus_{j \in I}V_j$ and $KGa$ and let $w = (w_j)_{j \in I} \in  \bigoplus_{j \in I}V_j$ be such that $\varphi(w) = a$. If $ v_k \neq 0$ for some $k \notin I$, we obtain a non-zero morphism 
$$ \bigoplus_{j \in I}V_j \mapright{\varphi} KGa \hookmapright{} \bigoplus_{i=1}^n V_i\mapright{\pi} V_k,$$
since $w \mapsto a \mapsto v_1 + \ldots + v_n \mapsto v_k$. This is a contradiction.
\end{proof}
\begin{corollary}
A glider of the form $V \oplus V' \supset Ka$ with $V,V' \in \Irr(G)$ non-isomorphic and $a = v + v', v \neq 0, v' \neq 0$ is irreducible.
\end{corollary}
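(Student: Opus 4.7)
The plan is to deduce irreducibility from the Proposition immediately preceding the corollary, together with the observation that a length-one filtration leaves very little room for subfragments.

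First I would invoke that preceding Proposition with $n=2$: if $KGa$ were a proper sub-summand of $V \oplus V'$, then one of $v$, $v'$ would have to vanish, contradicting the hypothesis. Hence $KGa = V \oplus V'$, and by \lemref{el} the chain $V \oplus V' \supset Ka$ really is a glider of essential length one, with $M_0 = V \oplus V'$, $M_1 = Ka$, and $M_i = 0$ for all $i \geq 2$.

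Next I would enumerate the possible subfragments $N \supset N_1 \supset N_2 \supset \cdots$. Since $N_i \subset M_i = 0$ for every $i \geq 2$, only $N_0$ and $N_1$ need attention, and because $M_1 = Ka$ is one-dimensional over $K$ there are exactly two candidates: $N_1 = 0$ or $N_1 = Ka$. In the first case condition $T_2$ is immediate at $i = 1$. In the second, the fragment axiom $F_1 KG \cdot N_1 \subset N_0$ together with $KG \cdot a = M_0$ forces $N_0 = M_0$, so $N$ agrees with $M$ termwise, and condition $T_3$ holds with $\alpha = \mathrm{id}$.

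I expect no serious obstacle: the one-dimensionality of $M_1$ reduces the problem to a binary case-split, and the non-isomorphism assumption $V \not\cong V'$ enters only in the first step via the preceding Proposition (ensuring we truly have $M_0 = KGa$ and hence a genuine glider in the sense of \lemref{el}). The mildest subtlety is checking that $\alpha = \mathrm{id}$ satisfies the monotonicity requirement of $T_3$, which is immediate.
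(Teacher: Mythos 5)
Your argument is correct and follows essentially the route the paper intends: the corollary is stated without proof precisely because the preceding Proposition forces $KGa = V \oplus V'$ when $v, v' \neq 0$, and irreducibility for a length-one chain then reduces to the criterion $M = KGa$ already established earlier in the section (and codified in Theorem~\ref{the:irred}). Your explicit case-split on $N_1 \in \{0, Ka\}$ against the trivial-subfragment conditions $T_2$ and $T_3$ is just a hands-on verification of that criterion, and it checks out.
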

All this leads to the following

\begin{theorem}\thelabel{irred}
Let $G$ be a finite group, $K$ an algebraically closed field of characteristic zero and let $\{V_1,\ldots,V_n\}$ be a full set of irreducible $G$-representations of resp. dimension $n_i$. A glider representation 
$$ M = \bigoplus_{i=1}^n V_i^{\oplus m_i} \supset Ka,$$
with $a = v_1^1 + \cdots + v_{m_1}^1 + v_1^2 + \cdots + v_{m_2}^2 + \cdots + v_1^n + \cdots + v_{m_n}^n \in M$ is irreducible if and only if 
\begin{enumerate}
\item $\forall i~ m_i \leq n_i$
\item $\forall i~ \dim(<v_1^i,\ldots,v_{m_i}^i>) = m_i$
\end{enumerate}
\end{theorem}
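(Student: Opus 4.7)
The plan is to reduce the combinatorial notion of irreducibility to the single algebraic identity $KGa=M$, then decompose that identity along the isotypic components of $M$, and finally invoke \theref{1} on each summand.

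First I would prove that $M\supset Ka$ is irreducible if and only if $KGa=M$. For necessity, observe that the chain $KGa\supset Ka$ is itself a subfragment of $M\supset Ka$, since the fragment condition $KG\cdot Ka\subset KGa$ is automatic. When $KGa\subsetneq M$, one checks by inspection that this subfragment fails all three triviality types: here $B(N)=B(M)=0$, so $T_1$ collapses to $T_2$, which is excluded because $N_1=Ka\neq 0$ and $N_j=0=M_j$ for $j\geq 2$; and $T_3$ fails because $N_0=KGa$ is none of $M,Ka,0$, while the borderline possibility $KGa=Ka$ is ruled out by the required inequality $\alpha(1)-1\geq\alpha(0)$. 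Conversely, assuming $KGa=M$, any subfragment has $N_j\subset M_j=0$ for $j\geq 2$, and $N_1\subset Ka$ is either $0$ (giving $T_2$-triviality) or the whole of $Ka$, in which case the fragment condition forces $M=KGa\subset N$ and hence $N=M$, which is $T_3$-trivial via $\alpha(i)=i$.

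Next I would split $KGa=M$ along the isotypic components. Because $K$ is algebraically closed of characteristic zero and the $V_i$ are pairwise non-isomorphic, the primitive central idempotents $e_i\in KG$ act as the projections onto $V_i^{\oplus m_i}$; writing $a_i=e_ia=v_1^i+\cdots+v_{m_i}^i$ gives $KGa=\bigoplus_i KGa_i$. Consequently $KGa=M$ is equivalent to $KGa_i=V_i^{\oplus m_i}$ for every $i$ separately.

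Finally, \theref{1} applied to each isotypic summand says exactly that $KGa_i=V_i^{\oplus m_i}$ if and only if $\dim_K\langle v_1^i,\ldots,v_{m_i}^i\rangle=m_i$, which is condition~(2); the bound $m_i\leq n_i$ in condition~(1) is then the automatic ceiling needed for $m_i$ vectors of the $n_i$-dimensional space $V_i$ to be linearly independent, in line with the corollary to \theref{1}. The main obstacle is the first step, where one must carefully track each triviality type against the one-dimensional top piece $Ka$ in order to reformulate irreducibility as the algebraic equation $KGa=M$; once that reformulation is in hand, the remainder is standard semisimple bookkeeping followed by a direct citation of \theref{1}.
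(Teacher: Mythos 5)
Your proof is correct, and it realizes exactly the argument the paper intends: the theorem is stated there without proof, as the summary of \theref{1} and \propref{twee}--style results, so the route ``reduce irreducibility to $KGa=M$, split into isotypic pieces, apply \theref{1} to each'' is the intended one. Two points of comparison are worth noting. First, your opening step --- checking each triviality type $T_1$--$T_3$ against the subfragment $KGa\supset Ka\supset 0\supset\ldots$ to establish that irreducibility of $M\supset Ka$ is equivalent to $KGa=M$ --- is a genuine addition: the paper simply asserts this equivalence in the sentence ``such an irreducible glider takes the form $\Omega=M\supset M_1=Ka$ such that $KGa=M$,'' and your handling of the borderline case $KGa=Ka$ via the inequality $\alpha(1)-1\geq\alpha(0)$ is the detail that makes the assertion actually true. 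Second, where you use the primitive central idempotents $e_i$ to get $KGa=\bigoplus_i KGa_i$, the paper instead supplies Proposition 3.10, which handles the non-isomorphic components by a projection/Schur argument; the two devices are interchangeable here, and the idempotent version is arguably cleaner since it reduces the whole statement to one application of \theref{1} per isotypic block, with condition (1) absorbed into condition (2) exactly as you say.
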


When studying glider representations we would like to work with chains of bigger length, so consider $e \subset G_1 \subset \ldots \subset G_d = G$. If $\Omega \supset M \supset \ldots \supset M_d \supset 0 \supset \ldots$ is an irreducible glider of $\el(M) = d$, then in particular, we must have that $M_d = Ka$ is a one-dimensional vector space, $M = KGa$ and \theref{irred} gives restrictions on the $G$-module $M$. The smaller groups $G_i$ appearing in the chain then determine the $M_{d-i}$. The following question now arises: If $\Omega \supset M$ is a bodyless irreducible glider of essential length $d$ with $\dim(M_1) = n_1$, what are the possible dimensions for $M$ and vice-versa?  For starters, we have

 \begin{lemma}\lemlabel{dim}
Let $e \subset G_1 \subset \ldots \subset G_d = G$ be a chain of groups and let $\Omega \supset M \supset \ldots \supset M_{d-1} \supset M_d = Ka$ be an irreducible glider representation. If $M_{d-i} = KG_{i}a = Ka$ for some $i$, then $\dim(M) \leq [G:G_i]$.
\end{lemma}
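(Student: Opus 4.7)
The plan is to realize $M$ as a quotient of an induced module and read the dimension bound off the induction. First I would apply \lemref{el} to identify $M$ explicitly: since $\Omega\supset M\supset\cdots\supset M_d=Ka$ is irreducible of essential length exactly $d$ (with $B(M)=0$ because $M_d=Ka\neq 0$), the lemma gives $M_j=F_{d-j}KG\cdot M_d=KG_{d-j}\cdot a$ for all $0\leq j\leq d$. In particular, $M=M_0=KGa$ is cyclic as a $KG$-module, generated by $a$.

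Next I would reinterpret the hypothesis $KG_ia=Ka$: it says $g\cdot a\in Ka$ for every $g\in G_i$, so there is a character $\chi:G_i\to K^{\times}$ with $g\cdot a=\chi(g)a$. Equivalently, $Ka$ is a one-dimensional $KG_i$-submodule of $M$. This invites inducing up to $G$: consider the $KG$-linear map
$$\psi:KG\otimes_{KG_i}Ka\longrightarrow M,\qquad x\otimes\lambda a\mapsto\lambda xa.$$
Well-definedness on the tensor product over $KG_i$ uses exactly the relation $ga=\chi(g)a$ for $g\in G_i$, and the image is $KGa=M$, so $\psi$ is surjective.

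Finally, choosing coset representatives $g_1,\ldots,g_{[G:G_i]}$ for $G_i$ in $G$, the elements $g_k\otimes a$ form a $K$-basis of the source, hence $\dim_K(KG\otimes_{KG_i}Ka)=[G:G_i]$ and therefore $\dim_K M\leq [G:G_i]$. I do not foresee a genuine obstacle: the argument is essentially the statement that a cyclic $KG$-module generated by a one-dimensional $KG_i$-submodule is a quotient of the corresponding induced module. The only small checkpoint is the applicability of \lemref{el}, which reduces to $M_d\neq 0$ and the maximality of $d$ with $M_d\neq B(M)$, both immediate from the given data.
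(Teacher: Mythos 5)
Your proof is correct and is essentially the paper's own argument rewritten in induced-module language: the paper writes $G=G_i\sqcup g_2G_i\sqcup\ldots\sqcup g_mG_i$ and notes that $KG_ia=Ka$ forces each $g_jha$ ($h\in G_i$) to be a scalar multiple of $g_ja$, so the $m=[G:G_i]$ translates $g_ja$ span $M=KGa$, which is exactly your surjection $KG\otimes_{KG_i}Ka\to M$ sending the basis $g_k\otimes a$ onto that spanning set. No gap; the only point to keep in mind is that the identification $M=KGa$ comes from the irreducibility of the glider via \lemref{el} (as the paper sets up just before the lemma), not merely from $M_d=Ka\neq 0$.
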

\begin{proof}
Write $G = G_i \sqcup g_2G_i \sqcup \ldots \sqcup g_mG_i$. Since $ha \in Ka$ for all $h \in G_i$, we have that $g_iha = k(h)g_i a$ and $g_ih'a = k(h')g_i a$ are linearly dependent for all $h,h' \in G_i$, hence $\dim(M) = \dim(KGa) \leq m = [G:G_i]$.
\end{proof}

\begin{remark}
In a similar vein one proves the more general statement that $\dim(M) \leq [G:G_i]\dim(M_{d-i})$.
\end{remark}

In fact, the answer follows from the relation between the simple $G_i$-representations. If $H < G$ is some subgroup, then we have a forgetful functor $U: \Mod(G) \to \Mod(H)$, which is essentially surjective. Let $U$ be a simple $G$-rep, if $V$ is a simple $H$-rep appearing in the decomposition into simples of $U$ as an $H$-rep, then we depict this by\\

\begin{tikzpicture}[scale=.5]
\node (U) at (0,2) {$U$};\\
\node (V) at (0,0) {$V$};\\
\draw (U) -- (V);
\end{tikzpicture}

and we say that $U$ lies over $V$. Of course, if we decompose $U \cong U_1 \oplus \ldots \oplus U_n$ into $H$-reps then we obtain\\

\begin{tikzpicture}[scale=.5]
\node (U) at (0,2) {$U$};
\node (U_1) at (-3,0) {$U_1$};
\node (U_2) at (-1,0) {$U_2$};
\node (dots) at (1,0) {$\ldots$};
\node (U_n) at (3,0) {$U_n$};
\draw (U) -- (U_1);
\draw (U) -- (U_2);
\draw (U) -- (dots);
\draw (U) -- (U_n);
\end{tikzpicture}

In this way we can draw a diagram relating all the simple representations of $G$ and $H$. We wonder whether we can encounter\\

\begin{tikzpicture}[scale=.5]
\node (U) at (-2,2) {$U$};
\node (V) at (2,2) {$U'$};
\node (W) at (0,0) {$V$};
\draw (U) -- (W) -- (V);
\end{tikzpicture}\\

where $U$ and $U'$ are not of equal dimension. We restrict to the situation of $p$-groups. It is well-known that they have a non-trivial center. Moreover, the index of the center in $G$ can not be $p$ (otherwise $G$ would be abelian already), so assume that $[G:Z(G)] = p^2$ and assume there is some chain $e \subset Z(G) \subsetneq G_2 \subsetneq G$. By the same reasoning as before, $G_2$ must be abelian. Let $V_1 \in \Irr(G_2)$ and suppose that there exist $W_1, P \in \Irr(G)$ of resp. dimension 1 and $p$ and such that $W_1 \cong V_1$ as $G_2$-reps and $P \cong V_1 \oplus gV_1 \oplus \ldots g^{p-1}V_1$ as $G_2$-reps for some $g \in G - G_2$. Again, since $G/G_2$ is cyclic of order $p$, there exists $p-1$ other 1-dimensional representations $W_2, \ldots, W_p$ that lie over $V_1$. Let $w_i \in W_i$ for $i=1,\ldots, p$ and let $v_1 \in V_1$. By \theref{irred} the glider representation
$$W_1 \oplus \ldots \oplus W_p \oplus P \supset KG_2(w_1,\ldots,w_p,v_1) \supset KZ(G)(w_1,\ldots,w_p,v_1) \supset K(w_1,\ldots,w_p,v_1)$$
is irreducible. However, since $KG_2(w_1,\ldots,w_p,v_1) = K(w_1,\ldots,w_p,v_1)$, this contradicts \lemref{dim} since $\dim(M) = p+1$. 

Next, if $[G:Z(G)] = p^3$, then there exists a chain $e \subset Z(G) \subsetneq G_2 \subsetneq G_3 \subsetneq G$. Let $V_1 \in \Irr(G_2)$ be a one-dimensional that has a one-dimensional $G$-representation $W_1$ lying over it. If $G/G_2 \cong C_{p^2}$ we can find $p^2$ non-isomorphic one-dimensional representations $W_1,\ldots, W_{p^2}$ lying over $V_1$. The same reasoning as above entails that there can not lie a $p$-dimensional irreducible $G$-rep over $V$. If $G/G_2 \cong C_p \times C_p$, the reasoning is different. Suppose in this case that we have a one-dimensional representation $W_1$ and a $p$-dimensional representation $P$ lying over $V_1$. Then we consider both $W_1$ and $P$ as $G_3$-reps. Since $G_3/G_2 \cong C_p$, we find $p$ non-isomorphic $G_3$-reps $W_1,\ldots, W_2, \ldots, W_p$ lying over $V_1$. By induction we know that $P$ can not be an irreducible $G_3$-rep, hence $P \cong P_1 \oplus \ldots \oplus P_p$ and, up to reordering, $P_1$ lies over $V_1$. Hence we can construct the following $FKG_3$ glider
$$P_1 \oplus W_1 \oplus \ldots \oplus W_p \supset KG_2(p_1,w_1,\ldots, w_p) \supset KZ(G)(p_1,w_1,\ldots, w_p) \supset K(p_1,w_1,\ldots, w_p),$$
with $w_i \in W_i$ for $i=1,\ldots, p$ and $p_1 \in P_1$. By construction we have that
$$KG_2(p_1,w_1,\ldots, w_p) = K(p_1,w_1,\ldots, w_p)$$ 
and \lemref{dim} entails that $\dim(P \oplus W_1 \oplus \ldots W_p) \leq \dim([G_3:G_2]) = p$, contradiction!

\begin{theorem}\thelabel{pgroup}
Let $G$ be a $p$-group and let $e \subset Z(G) \subset G_2 \subset \ldots \subset G_d = G$ be a maximal chain of normal subgroups. If $V, W \in \Irr(G)$ are such that they lie over the same irreducible representation of $G_{d-1}$, then $\dim(V) = \dim(W)$.
\end{theorem}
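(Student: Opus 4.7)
The plan is to extend the glider-theoretic contradiction already sketched in the preceding discussion (the $[G:Z(G)] \in \{p^2, p^3\}$ cases) to arbitrary $d$, with the simplification that only two irreducible summands are needed rather than the $p+1$ extensions used there. Assume for contradiction that $V, W \in \Irr(G)$ lie over $U \in \Irr(G_{d-1})$ with $\dim V \neq \dim W$, so in particular $V \not\cong W$. Since the chain is maximal in a $p$-group, $G_{d-1}$ is normal in $G$ with $G/G_{d-1}$ cyclic of order $p$, and classical Clifford theory forces $\dim V, \dim W \in \{\dim U,\, p\dim U\}$; after relabelling I take $V|_{G_{d-1}} \cong U$ and $W|_{G_{d-1}} \cong U \oplus gU \oplus \cdots \oplus g^{p-1}U$ for some $g \in G \setminus G_{d-1}$.

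Fix $u \in U \setminus \{0\}$, let $v \in V$ correspond to $u$ under $V|_{G_{d-1}} \cong U$, and let $w \in W$ be the element of the $U$-summand of $W|_{G_{d-1}}$ corresponding to $u$. Set $M := V \oplus W$, $a := v + w$, and form the refined fragment $M \supset M_1 \supset \cdots \supset M_d$ with $M_j := KG_{d-j}a$ (so $M_0 = M$ and $M_d = Ka$). That $KGa = M$ follows from the fact that $V \not\cong W$: every $G$-subrepresentation of $V \oplus W$ splits as $V' \oplus W'$ by Schur, and $\pi_V(KGa) = KGv = V$, $\pi_W(KGa) = KGw = W$ force equality. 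Irreducibility of the fragment is then immediate: $M_d = Ka$ is one-dimensional, so any subfragment $N$ with $N_d = 0$ is trivial of type $T_2$, while $N_d = Ka$ forces $N_{d-j} \supseteq F_j KG \cdot Ka = M_{d-j}$ for every $j$ and hence $N = M$.

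The contradiction now comes from the generalized bound stated in the Remark following \lemref{dim}: $\dim M \leq [G:G_{d-1}] \cdot \dim M_1 = p \cdot \dim(KG_{d-1}a)$. Restricting $M$ to $G_{d-1}$ yields the decomposition $U^{\oplus 2} \oplus gU \oplus \cdots \oplus g^{p-1}U$, inside which $a$ sits in the first two $U$-summands as the diagonal vector $(u,u)$; since $\dim_K \langle u \rangle = 1$, \theref{1} applied to the $G_{d-1}$-module $U^{\oplus 2}$ gives $KG_{d-1}(u,u) \cong U$ and so $\dim M_1 = \dim U$. Combining, $(p+1)\dim U = \dim M \leq p\dim U$, which is absurd, so $\dim V = \dim W$.

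The main technical point is the irreducibility of the refined length-$d$ glider — the preceding $d = 3, 4$ discussion appeals to \theref{irred} for the coarse two-step fragment $M \supset Ka$ and leaves the refinement implicit, but in our construction the fact that $M_d = Ka$ is one-dimensional lets the subfragment analysis collapse to the short argument above. Once this is in place, the remaining ingredients are direct consequences of results already proved: the description of $V|_{G_{d-1}}$ and $W|_{G_{d-1}}$ from Clifford, the identification $KG_{d-1}a \cong U$ via \theref{1}, and the dimension bound via the Remark after \lemref{dim}.
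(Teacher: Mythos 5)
Your argument is correct, and it is a genuinely different (and more uniform) route than the one in the paper. The paper uses the glider machinery only in the low cases $[G:Z(G)]=p^2,p^3$: there it assembles a $(p+1)$-component glider $W_1\oplus\cdots\oplus W_p\oplus P\supset\cdots\supset Ka$ out of \emph{all} $p$ one-dimensional lifts of the given $G_2$-representation together with the $p$-dimensional one, arranges $KG_2a=Ka$, and contradicts \lemref{dim} itself. For general $d$ the paper then abandons gliders and runs an induction whose key step is purely classical: if $\dim V<\dim W$ then $W\cong \Ind_{G_{d-1}}^G(U)\cong \Ind_{G_{d-1}}^G(V_{|G_{d-1}})\supset V$, contradicting irreducibility of $W$. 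You instead give a single two-summand construction $M=V\oplus W$, $a=v+w$ that works for every $d$ at once: $KGa=V\oplus W$ by Schur (legitimate since $\dim V\neq\dim W$ forces $V\not\cong W$), $KG_{d-1}a\cong U$ because $a$ sits diagonally in the two copies of $U$ inside $M_{|G_{d-1}}$ (\propref{twee} or \theref{1}), and the coset bound of the Remark after \lemref{dim} gives $(p+1)\dim U=\dim(KGa)\leq [G:G_{d-1}]\dim(KG_{d-1}a)=p\dim U$, which is absurd. Two small observations. First, your subfragment analysis is fine (since $M_d=Ka$ is one-dimensional, $N_d$ is $0$ or $Ka$, giving type $T_2$ or $N=M$, i.e.\ type $T_3$), but it is not actually needed for the contradiction: the bound $\dim(KGa)\leq[G:G_{d-1}]\dim(KG_{d-1}a)$ is just the coset decomposition $KGa=\sum_j g_jKG_{d-1}a$ and holds for any cyclic module, irreducible glider or not. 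Second, your Clifford dichotomy $\dim V,\dim W\in\{\dim U,\,p\dim U\}$ with multiplicity one uses that $G/G_{d-1}\cong C_p$ is cyclic (so the ramification index is $1$ over an algebraically closed field of characteristic zero); this is the same standard fact the paper relies on implicitly, so there is no circularity. What your approach buys is the elimination of both the induction on $d$ and the need to produce $p$ distinct one-dimensional lifts; what the paper's inductive step buys is independence from the glider formalism altogether in the general case.
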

\begin{proof}
If $G_2 = G$, then $G = Z(G)$ and there is nothing to prove. For $G_3 = G$, it follows from the deductions above \lemref{dim}. If $G_4 = G$, suppose that $V$ and $W$ lie over some irreducible representation $U \in \Irr(G_3)$. If $\dim(U) = 1$, then $U_{|G_2} \in \Irr(G_2)$ and the result follows from our reasoning above. If $\dim(U) = p$, then $U \cong V_1 \oplus \ldots \oplus V_p$ as $G_2$-reps, hence $V$ and $W$ lie over some irreducible $V_i \in \Irr(G_2)$ and the result follows again by the reasoning above. Now, let $G_5 = G$ and $V,W$ lie over some $U \in \Irr(G_4)$. Suppose that $\dim(V) < \dim(W)$. Then $W_{|G_4}$ can not be irreducible, otherwise $W,V \in \Irr(G_4)$ would lie over a same irreducible $U' \in \Irr(G_3)$. So $W \cong W_1 \oplus \ldots \oplus W_p$ as $G_4$-reps and we may assume that $W_1 \cong U \cong V_{|G_4}$. But then it follows that $W \cong \Ind_{G_4}^G(W_1) \cong \Ind_{G_4}^G(U) \cong \Ind_{G_4}^G(V_{|G_4}) \supset V$, which shows that $W$ is not irreducible, contradiction.
The general result now follows from induction.
\end{proof}

The theorem shows that the Hasse diagram is not connected. For example, the Hasse diagram of $\{1\} \subset \{1,-1\} \subset \mathbb{Z}_4^j \subset Q_8$ of \exaref{q8} looks like\\

\begin{equation}\label{Hasse1}
\begin{tikzpicture}[scale=.5]
\node (U) at (-2,2) {$U$};
\node (T_1) at (1,2) {$T_1$};
\node (T_2) at (3,2) {$T_2$};
\node (T_3) at (5,2) {$T_3$};
\node (T_4) at (7,2) {$T_4$};
\node (V_1) at (-3,0) {$V_1$};
\node (V_2) at (-1,0) {$V_2$};
\node (V_3) at (2,0) {$V_3$};
\node (V_4) at (6,0) {$V_4$};
\node (S) at (-2,-2) {$S$};
\node (T) at (4,-2) {$T$};
\draw (U) -- (V_1) -- (S) -- (V_2) -- (U);
\draw (T_1) -- (V_3) -- (T_2);
\draw (V_3) -- (T) -- (V_4);
\draw (T_3) -- (V_4) -- (T_4);
\end{tikzpicture}
\end{equation}

Since a finite nilpotent group is the direct product of its Sylow subgroups, we immediately get the same result for finite nilpotent groups.

\begin{corollary}
Let $G = P_1\ldots P_n$ be a finite nilpotent group with Sylow subgroups $P_i$ of order $p_i^{a_i}$ and let $H \triangleleft G$ be a maximal normal subgroup. If $V,W$ lie over the same irreducible representation $U$ of $H$, then $\dim(V) = \dim(W)$.
\end{corollary}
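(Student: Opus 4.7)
The plan is to reduce the nilpotent case to \theref{pgroup} via the structural decomposition of finite nilpotent groups. First, I would invoke the classical fact that $G = P_1 \times \cdots \times P_n$ is the internal direct product of its Sylow subgroups, which pairwise centralize one another because their orders are coprime. For the normal subgroup $H \triangleleft G$, a short Bezout argument on the $|P_j|$ shows that the $p_i$-part of any element of $H$ is a power of that element and therefore lies in $H$ itself; this gives $H = H_1 \times \cdots \times H_n$ with $H_i := H \cap P_i$, each $H_i$ being normal in $P_i$ (and indeed in $G$).

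Next, since $K$ is algebraically closed of characteristic zero, external tensor product identifies $\Irr(G)$ with $\Irr(P_1) \times \cdots \times \Irr(P_n)$, and likewise $\Irr(H)$ with $\prod_i \Irr(H_i)$. I can therefore write $V = V_1 \ot \cdots \ot V_n$, $W = W_1 \ot \cdots \ot W_n$, and $U = U_1 \ot \cdots \ot U_n$, and the restriction factors as $V|_H = V_1|_{H_1} \ot \cdots \ot V_n|_{H_n}$. By uniqueness of the tensor decomposition of irreducibles of a direct product, $V$ lies over $U$ if and only if each $V_i$ lies over $U_i$, and similarly for $W$. The problem therefore reduces to each $p_i$-group factor.

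Then I would apply \theref{pgroup} to each factor to conclude $\dim(V_i) = \dim(W_i)$, and multiply over $i$ to obtain $\dim(V) = \prod_i \dim(V_i) = \prod_i \dim(W_i) = \dim(W)$. The hard part will be justifying that \theref{pgroup}, formulated with $H_i$ placed as the penultimate term $G_{d-1}$ of a maximal chain of normal subgroups in $P_i$, applies to the arbitrary normal subgroup $H \cap P_i$ produced by the intersection. I would bridge this by interpolating a chain $H_i = K_0 \triangleleft K_1 \triangleleft \cdots \triangleleft K_m = P_i$ of normal subgroups of $P_i$ with each step of index $p_i$, and then climbing one prime-index step at a time via \theref{pgroup} together with Clifford theory for $K_{j+1} \triangleright K_j$, so that the equidimensionality of irreducibles lying over a fixed rep at each level propagates upward to $V_i$ and $W_i$.
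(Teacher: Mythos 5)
Your first two steps---splitting $G$, $H$ and their irreducibles along the Sylow factors and reducing componentwise---are correct and match the move the paper itself makes in its base case, where it writes $H=Q_1P_2\cdots P_n$ and peels off the factor $Q_1\triangleleft P_1$. The gap is exactly the step you postpone to your last sentence, and it is not a bridgeable technicality. After your reduction you need: for an \emph{arbitrary} normal subgroup $N$ of a $p$-group $P$, two irreducibles of $P$ lying over the same irreducible of $N$ have equal dimension. Your ladder $N=K_0\triangleleft K_1\triangleleft\cdots\triangleleft K_m=P$ with index-$p$ steps cannot deliver this via \theref{pgroup}: that theorem requires the two representations to lie over the \emph{same} (isomorphic) irreducible of the index-$p$ subgroup, whereas in climbing from level $j$ to level $j+1$ you only know that the chosen constituents of $V_i|_{K_j}$ and of $W_i|_{K_j}$ over $U_i$ have equal \emph{dimension}, not that they are isomorphic, so the theorem does not apply at the next rung. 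Worse, the statement you have reduced to is false when $N$ is small: if $H\cap P_1=\{e\}$ (which happens for genuine normal subgroups of $G$, e.g.\ $H=P_2\cdots P_n$), then $V=\mathbf{1}$ and $W=\rho\ot\mathbf{1}$, with $\rho$ any nonlinear irreducible of a nonabelian $P_1$, both lie over the trivial representation of $H$ but have different dimensions. So no propagation argument can close this step; the componentwise reduction, although formally an equivalence, exposes the failure for small $H$ rather than curing it, and shows that some additional hypothesis on how $H$ meets the Sylow factors is needed.

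The paper argues differently for general $H$. It inducts on the length of a maximal chain of normal subgroups $H\subset H_1\subset\cdots\subset H_d\subset G$ taken inside the whole nilpotent group, not inside a single $P_i$. Since $[G:H_d]$ is a prime $p_i$, Clifford theory leaves two cases for each of $V,W$: either it restricts irreducibly to $H_d$, or it is induced from $H_d$ and its dimension is exactly $p_i$ times that of any constituent of its restriction. The induction hypothesis applied to $H\triangleleft H_d$ gives $\dim(V_1)=\dim(W_1)$ for constituents $V_1\subset V|_{H_d}$ and $W_1\subset W|_{H_d}$ lying over $U$, and an argument with $\Ind_{H_d}^{G}$ and Frobenius reciprocity is invoked to exclude the mixed case in which one of $V,W$ restricts irreducibly and the other does not; the two surviving cases give $\dim(V)=\dim(W)$ at once. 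To salvage your route you would need an analogue of this exclusion step at every rung of your ladder inside $P_i$, and the counterexample above shows this cannot be done in the generality you have reduced to.
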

\begin{proof}
Suppose that $H \triangleleft G$ is a maximal subgroup, that is, up to reordering, $H = Q_1P_2\ldots P_n$ with $[P_1:Q_1] = p_1$. Since $P_1 \triangleleft G$ is central, $U_{|P_1} = (U_1)^{p_2^{a_2}\ldots p_n^{a_n}}$ for $U = V,W$. This implies that $\dim(V) = p_2^{a_2}\ldots p_n^{a_n} \dim(V_1)$ and $\dim(W) = p_2^{a_2}\ldots p_n^{a_n}\dim(W_1)$. Since $V_1$ and $W_1$ are irreducible representations of the $p_1$-group $P_1$ lying over the same irreducible representation of $Q_1$, the result follows from \theref{pgroup}.
\end{proof}

As a short digression we propose a definition for the `character' or `generalized trace map' of a glider representation. As an application, we will consider the `generalized character table' of $Q_8$ and $D_8$, the notorious pair of groups when showing the shortcomings of character tables.\\

Let $\Omega \supset M \supset \ldots \supset M_n \supset \ldots$ be an $FKG$-glider representation. From general fragment theory, we may reduce to the situation where $B(M) = 0$ and $\el(M) = d$, i.e. $M_d \supsetneq M_{d+1} = 0$. For $j \leq i$ we have $KG_j M_i \subset M_{i-j}$, which we present in the following lower triangular matrix
$$A = \left(\begin{array}{ccccccc}
G_dM_d &&&&&&\\
G_{d-1}M_d & G_{d-1}M_{d-1} &&&&&\\
\vdots & \vdots & \ddots&&&&\\
G_iM_d & G_iM_{d-1} & \ldots & G_iM_i &&&\\
\vdots & \vdots & \ddots & \vdots & \ddots&&\\
G_1M_d & G_1M_{d-1} & \ldots &G_1M_i & \ldots & G_1M_1 & \\
M_d & M_{d-1} & \ldots & M_i & \ldots & M_1 & M_0
\end{array}\right)$$
The $i$-th row of $A$ consists of $G_i$-modules $G_iM_j$. We denote the associated characters by $\chi_{ij}$. We propose the following definition
\begin{definition}\deflabel{gentrace}
Let $\Omega \supset M$ be a $FKG$-glider representation with $B(M) = 0$ and $el(M) = d$, then the associated trace of $\Omega \supset M$ is the map
$$\chi_M: G \to M_{d+1}(K),~ g \mapsto \left(\begin{array}{ccccccc}
\chi_{dd}(g) &&&&&&\\
\chi_{d-1,d}(g) & \chi_{d-1,d-1}(g) &&&&&\\
\vdots & \vdots & \ddots&&&&\\
\chi_{i,d}(g) & \chi_{i,d-1}(g) & \ldots & \chi_{ii}(g) &&&\\
0 & 0 & \ldots & 0 & 0 &&\\
\vdots & \vdots & \ddots & \vdots & \vdots& \ddots&\\
0 & 0 & \ldots & 0 & \ldots & \ldots& 0
\end{array}\right),$$
when $g \in G_i - G_{i-1}$.
\end{definition}

Clearly $\chi_M(g) = \chi_M(g')$ if and only if $g \in c(g')$ and $g,g' \in G_i - G_{i-1}$. The matrix $\chi_M(e)$ lists the dimensions of the $G_jM_i$ and we have the following nice characterization

\begin{proposition}\proplabel{anti-diagonal}
Let $\Omega \supset M$ be a glider representation with respect to a finite group algebra filtration. Then $M$ is irreducible of essential length equal to the filtration length and with $B(M) = 0$ if and only if the matrix $\chi(e)$ is symmetric with respect to the anti-diagonal and has a 1 in the left most corner.
\end{proposition}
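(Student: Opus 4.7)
\emph{Proof plan.} The plan is to reduce the equivalence to two observations: by \lemref{el}, every layer $M_j$ of an irreducible glider of maximal essential length is generated from $M_d$, and consequently the matrix entries depend only on $\min(d-i,j)$, which is invariant under the anti-diagonal reflection $(i,j)\leftrightarrow(d-j,d-i)$. The forward direction will combine \lemref{el} with a rigidity argument forcing $\dim_K M_d = 1$; the backward direction will decode the matrix to show every subfragment is trivial.

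\emph{$(\Rightarrow)$.} Assuming $M$ is irreducible with $\el(M)=d$ and $B(M)=0$, I would first invoke \lemref{el} to write $M_i = F_{d-i}KG\cdot M_d = KG_{d-i}M_d$ for $0\le i\le d$. The crucial step is to show $\dim_K M_d = 1$: if instead $\dim M_d\ge 2$, I would pick a proper one-dimensional subspace $Kv \subsetneq M_d$, set $N_d=Kv$ and $N_i=KG_{d-i}v$ for $0\le i<d$ (with $N_i=0$ for $i>d$), and verify that $N$ is a subfragment using the group-algebra identity $KG_a\cdot KG_b = KG_{\max(a,b)}$. Non-triviality of $N$ then contradicts irreducibility: $N_i\ne 0$ wherever $M_i\ne 0$, killing $T_1$ and $T_2$, and any $\alpha$ witnessing $T_3$ would force $M_{\alpha(d)}=Kv$, impossible because every $M_k$ has dimension either $\ge\dim M_d\ge 2$ (for $k\le d$) or $0$ (for $k>d$). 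This establishes $M_d=Ka$ and the $1$ in the bottom-left corner. For anti-diagonal symmetry one just computes $KG_iM_j = KG_i\cdot KG_{d-j}a = KG_{\max(i,d-j)}a = M_{\min(d-i,j)}$, whose dimension depends symmetrically on $\min(d-i,j)$.

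\emph{$(\Leftarrow)$.} Given anti-diagonal symmetry and a $1$ in the bottom-left corner, I would first read the corner as $\dim M_d = 1$, so $M_d=Ka$, and then read the symmetry comparing the left-most column $(i,d)$ with the bottom row $(0,j)$ to conclude $\dim M_j = \dim(KG_{d-j}a)$; the inclusion $KG_{d-j}a\subseteq M_j$ then upgrades this to $M_j = KG_{d-j}a$. To prove irreducibility I would take an arbitrary subfragment $N\subseteq M$ and split on $N_d$, which is $0$ or $M_d$ because $\dim M_d = 1$. The first case triggers $T_2$ at index $d$; the second forces $N_j\supseteq KG_{d-j}N_d = M_j$ for every $j\le d$, so $N=M$ and $T_3$ with $\alpha=\mathrm{id}$ applies.

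\emph{Main obstacle.} The delicate step is the rigidity argument in $(\Rightarrow)$ pinning $\dim M_d$ down to $1$: one must construct the auxiliary subfragment from a hypothetical proper subspace $Kv\subsetneq M_d$ and rule out each of $T_1, T_2, T_3$, the $T_3$ case hinging on the dimension gap between $Kv$ and every $M_k$. The remainder of both directions is essentially bookkeeping with $KG_a\cdot KG_b = KG_{\max(a,b)}$ and the anti-diagonal reflection $(i,j)\leftrightarrow(d-j,d-i)$.
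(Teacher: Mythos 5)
Your argument is correct, but it follows a genuinely different route from the paper's. The paper proves (essentially only) the forward direction by induction on the length of the chain: it invokes \lemref{irr1} to see that $M_1 \supset M_2 \supset \ldots$ is an irreducible glider for the truncated filtration, deduces the anti-diagonal symmetry of all but the extreme diagonal entries from the induction hypothesis, and then checks the remaining diagonal identities $\dim KG_iM_i = \dim KG_{d-i}M_{d-i}$ via $KG_iKG_{d-i}M_d = KG_{\max\{i,d-i\}}M_d$. You bypass the induction entirely: \lemref{el} gives $M_j = KG_{d-j}M_d$ globally, and the single computation $KG_iM_j = KG_{\max(i,d-j)}M_d = M_{\min(d-i,j)}$, whose index is invariant under $(i,j)\mapsto(d-j,d-i)$, yields the symmetry of \emph{all} entries at once --- the paper's diagonal identity is just the special case $j=i$. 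You also supply two pieces the paper leaves implicit: an explicit rigidity argument (via the auxiliary subfragment $N_i = KG_{d-i}v$ and the elimination of $T_1$--$T_3$) showing $\dim_K M_d = 1$, which is what puts the $1$ in the corner, and a genuine proof of the converse, which the paper's induction never really addresses. The trade-off is that the paper's inductive scheme fits its running use of \lemref{irr1}, whereas your direct computation is shorter, self-contained, and actually establishes the stronger statement that the reflected entries are \emph{equal as modules}, not merely of equal dimension. One small point to make explicit when writing this up: in the backward direction the matrix $\chi_M(e)$ is only defined after assuming $B(M)=0$ and $\el(M)=d$ (see \defref{gentrace}), so the content of that direction is the irreducibility claim, exactly as you treat it.
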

\begin{proof}
For $e \subset G$, the result is clear since we have irreducibility if and only if $M = GM_1$. We proceed by induction. Let $\Omega \supset M$ be an irreducible $FKG$-glider with $G_d = G$  such that $B(M) = 0$ and $\el(M) = d$, then \lemref{irr1} entails that $ M_1 \supset M_2 \supset \ldots$ is also irreducible, but since $\el(M) = d$, it is also an irreducible $FKG_{d-1}$-glider. Suppose that the result holds up to chains of length $d-1$ and let $G$ be some group with a chain of length $d$. Let $\Omega \supset M$ be an irreducible glider. Then $\Omega \supset M_1 \supset \ldots$ is an irreducible $FKG_{d-1}$-glider and the matrix
$$ \chi_M(e) = \left(\begin{array}{ccccccc}
\chi_{dd}(e) &&&&&\\
\chi_{d-1,d}(e) & \chi_{d-1,d-1}(e) &&&&\\
\vdots & \vdots & \ddots&&&\\
\chi_{i,d}(e) & \chi_{i,d-1}(e) & \ldots & \chi_{ii}(e) &&\\

\vdots & \vdots & \ddots & \vdots & \ddots\\
\chi_{0,d}(e) & \chi_{0,d-1}(e) & \ldots & \chi_{0,i}(e) & \ldots & \chi_{0,1}(e)
\end{array}\right)$$
is symmetric with respect to the anti-diagonal everywhere below the diagonal. We still have to check whether $\chi_{i,i}(e) = \chi_{d-i,d-i}(e)$ for all $0 \leq i \leq d$. These numbers correspond to the dimension of $KG_iM_i$ and $KG_{d-i}M_{d-i}$, but since $M$ is irreducible we have
$$KG_iM_i = KG_i KG_{d-i}M_d = KG_{\max\{i,d-i\}}M_d = KG_{d-i}KG_iM_d = KG_{d-i}M_{d-i}.$$
\end{proof}

For a fixed chain $e \subset G_1 \subset \ldots \subset G_d = G$, we can enlist all the irreducible gliders of essential length $d$ and calculate the `generalized character table' of the chain. For $p$-groups $G$ with $G_1 = Z(G)$ we can retrieve the classical character table of $G$ from the generalized table. Indeed, by looking at $\chi_M(e)$, we find all irreducibles with dimension vector $\dim(M) = (\dim(M_d),\dim(M_{d-1}),\ldots,\dim(M_0)) = (1,1, \ldots, 1, n)$. One should take care, for this does not imply that $M$ is an irreducible $G$-rep! Indeed, for $\{1\} \subset \{1,-1\} \subset \mathbb{Z}_4^j \subset Q_8$, we could take $T_1 \oplus T_2 \supset V_3 \supset T \supset T$. By looking at the $M$ with $\dim(M) = (1,\ldots,1)$  we recover all the 1-dimensional representations.Subsequenly, look at the $M$ with $\dim(M) = (1,1,1, \dim(M_{d-3}), \ldots, \dim(M_1), p^\alpha)$ then we have to look at $\chi_M(z)$ for $z \in G_2$. If $d=3$, \theref{pgroup} ensures that if $M$ is an irreducible $G$-rep, that the $G_2$-character of $M_{d-2} = KG_2M_d$ is different from the ones appearing in the irreducible gliders of dimension vector $(1,\ldots,1)$ already found. In other words, the irreducibility of $M$ as  $G$-rep is being detected by the character of the one-dimensional $G_2$-rep $M_{d-2}$.

\begin{example}
Consider now $\{1\} \subset \{1,-1\} \subset H \subset Q_8$ and $\{e\} \subset \{e,a^2\} \subset H' \subset D_8$, with $H$ and $H'$ two subgroups of order 4 containing the center. By the above reasoning we can localize the irreducible gliders with $M$ the unique simple 2-dimensional representation. The Hasse diagram is the same for both chains and its form \eqref{Hasse1} says that there are two irreducible gliders of dimension $(1,1,1,2)$ ending in $U$. If we then look at $\chi_M(h)$ for $h \in H$, we obtain the characters of the $H$-reps $V_1$ and $V_2$. From this information we can deduce whether $H$ must be isomorphic to $C_4$ or $V_4$. In the latter case we have $G = Q_8$.
\end{example}

\section{Decomposition groups}

In \cite{CVo1} a Clifford theory for glider representations was performed, leading to the existence of so-called building blocks with associated decomposition groups. In this section, we want to investigate a deeper connection between these decomposition groups.

Let $K$ be an algebraically closed field of ${\rm~char}K = 0$ and $G$ some finite group. Consider a finite group algebra filtration $FKG$ on $KG$ given by a chain of normal subgroups $1 \triangleleft G_1 \triangleleft \ldots \triangleleft G_d = G$. Let $H \triangleleft G$ be another normal subgroup and consider the filtration $FKH \subset FKG$ defined by the chain $1 \triangleleft H_1 \triangleleft \ldots \triangleleft H_d = H$ where $H_i = H \cap G_i$. Let $M$ be an irreducible $FKG$-glider representation of $\el(M) = d$ and $B(M) = 0$. For $0 \leq i \leq d$ we have the following group algebra filtrations
\begin{equation}\label{filtrations}
\begin{array}{cl}
(f1): &K \subset KH_1 \subset \ldots \subset KH_{i-1} \subset KH_{i-1}\\
(f2): &K \subset KH_1 \subset \ldots \subset KH_{i-1} \subset KH_{i}\\
(f3): &K \subset KG_1 \subset \ldots \subset KG_{i-1} \subset KG_{i}\\
(f4): &K \subset KG_1 \subset \ldots \subset KG_{i-1} \subset KG_{i-1}
\end{array}
\end{equation}
and it is clear that $M$ is a glider representation for each of these four filtrations. When performing the Clifford theory in \cite{CVo1} we only considered filtrations $(f2) \subset (f4)$ and obtained decomposition groups $H_{i} \subset G_i' \subset G_i$ depending on some building block $S$, which is a simple $H_{i-1}$-representation. What we want to do now, is to consider the inclusions
$$\begin{array}{ccc}
(f2) & \subset & (f4)\\
\cup & \dsubset & \cup \\
(f1) & \subset & (f3)
\end{array}$$
In total there are five inclusions, so for any building block $S$, say an irreducible $KH_{i}$-module, we obtain five decomposition groups:
$$\begin{array}{ccccc}
H_{i+1} & \subset & G_{i+1}' & \subset & G_{i+1}\\
\cup &&  & \dsubset~~~& \cup\\
H_i' & & H_i^2 && G_i''\\
\cup &~~~ \dsubset &&& \cup\\
H_i & \subset &G_i' & \subset & G_i 
\end{array}$$ 

We wonder whether there are some relations between these 5 decomposition groups. We assume that the chains on $G$ and $H$ are maximal, i.e. the subsequent factor groups are all simple and $H_{i+1}G_i = G_iH_{i+1} = G_{i+1}$.\\

Let us briefly recall from \cite{Cl} how the classical decomposition groups are constructed: Let $H \triangleleft G$ be a normal subgroup and let $V$ be some simple $G$-representation. If $V_H$ is no longer simple, there is some simple $H$-subrepresentation $W$ and we can find elements $g_2, \ldots, g_r \in G - H$ such that 
$$V = W \oplus g_2W \oplus \ldots \oplus g_rW.$$
All of the $g_iW$ are simple $H$-reps and of the same degree. Group together the ones that are isomorphic to yield a decomposition
$$V = R_1 \oplus \ldots \oplus R_m,$$
with $W \subset R_1$ and $m | r$. Moreover, all the $R_i$ are of equal dimension and the elements of $G$ permute the spaces $R_i$ among each other (\cite[Theorem 2]{Cl}). The decomposition group $H \subset G' \subset G$ is then the group of elements $g$ that leave $R_1$ invariant, i.e. $gR_1 = R_1$. 

\begin{lemma}
The decomposition group $G'$ is exactly the group of elements $g \in G$ such that $gW \cong W$ as $H$-modules.
\end{lemma}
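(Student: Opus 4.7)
The plan is to exploit that $H \triangleleft G$, which is what lets $g$ act on $H$-submodules of $V$ by conjugation-twisted translation. For any $g \in G$ and any $H$-submodule $U \subset V$, the subspace $gU$ is again an $H$-submodule of $V$, because for $h \in H$ we have $h(gu) = g(g^{-1}hg)u$ and $g^{-1}hg \in H$. Applied to $U = W$ and $U = R_1$, this says both $gW$ and $gR_1$ are $H$-submodules of $V|_H$, and in fact $gW$ is simple (it is isomorphic to $W$ with the action twisted by conjugation), while $gR_1$ is isotypic with $gW$ as its unique simple constituent (up to isomorphism).

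With this in hand I would prove the two inclusions. For the $(\Leftarrow)$ direction, assume $gW \cong W$ as $H$-modules. Then $gW$ embeds into the $W$-isotypic component of $V|_H$, which is by construction $R_1$, so $gW \subset R_1$. On the other hand $gR_1$ is an $H$-isotypic submodule whose simple constituent is $gW \cong W$, hence $gR_1$ is also contained in the $W$-isotypic component $R_1$; since $\dim gR_1 = \dim R_1$ this forces $gR_1 = R_1$, i.e.\ $g \in G'$. For the $(\Rightarrow)$ direction, assume $g \in G'$, so $gR_1 = R_1$. Then $gW \subset gR_1 = R_1$, and since $gW$ is a simple $H$-submodule sitting inside the $W$-isotypic component, $gW \cong W$ as $H$-modules.

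The whole argument really hinges on the uniqueness of isotypic decompositions in the semisimple category $\mathrm{Mod}(KH)$ and on the bookkeeping that $gR_1$ is \emph{the} isotypic component attached to the simple module $gW$; once those are noted, the verification is essentially formal. The one small point to be careful about is not to conflate ``$gW$ is a subspace of $V$'' with ``$gW$ is an $H$-submodule of $V$'': the latter is exactly where normality of $H$ in $G$ is used, and without it neither direction would make sense.
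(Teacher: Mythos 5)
Your proof is correct and follows essentially the same route as the paper's: both directions reduce to the uniqueness of the isotypic decomposition of $V_H$ together with the observation that translation by $g$ sends $H$-submodules to $H$-submodules (this is where $H \triangleleft G$ enters) and preserves their isomorphism classes. The only cosmetic difference is that you close the converse with the dimension count $\dim(gR_1) = \dim(R_1)$, whereas the paper instead invokes the fact from Clifford's theorem that the elements of $G$ permute the components $R_i$.
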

\begin{proof}
If $g \in G'$, then (up to reordering the $g_i$ from above) $gR_1 = g(W \oplus g_2W \oplus \ldots \oplus g_{r'}W) = R_1 = W \oplus g_2W \oplus \ldots \oplus g_{r'}W$ (where $r' = r/m$). Hence $gR_1 = gW \oplus gg_2W \oplus \ldots \oplus gg_{r'}W$ and because the decomposition into $H$-components is unique, we must have that $gW \cong g_iW$ for some $1 \leq i \leq r'$. But by construction, $g_iW \cong W$. Conversely, suppose that $gW \cong W$ as $H$-modules, then for all $1 \leq i \leq r'$, $gg_iW \cong gW \cong W$. Because $gR_1 = R_j$ for some $1 \leq j \leq m$, $j$ must be 1, since the irreducible $H$-components of $gR_1$ are all isomorphic to $W$.
 \end{proof} 
The previous lemma allows to obtain a first result
\begin{proposition}\proplabel{4.1}
In the situation above, we have the following
\begin{itemize}
\item $H_i^2 = G_i \Leftrightarrow G_i' = G_i {\rm~and~}H_i' = H_i;$
\item $H_i^2 = H_{i+1} \Leftrightarrow G_i' = H_i {\rm~and~}H_i' = H_{i+1};$
\item $H_i^2 = G_{i+1} \Leftrightarrow G_i' = G_i {\rm~and~} H_i' = H_{i+1};$
\end{itemize}
\end{proposition}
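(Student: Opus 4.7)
The plan is to identify $G_i'$, $H_i'$ and $H_i^2$ as the stabilizers under conjugation of the building block $S$ inside $G_i$, $H_{i+1}$ and $G_{i+1}$ respectively; the characterization of the classical decomposition group recalled just above the proposition makes this description available, since $H_i$ is normal in each of these ambient groups. From these definitions one then reads off the two set-theoretic identities $G_i' = H_i^2 \cap G_i$ and $H_i' = H_i^2 \cap H_{i+1}$. Combined with the two maximal-chain identities $H_{i+1} \cap G_i = H_i$ and $H_{i+1} G_i = G_{i+1}$ (the latter being explicitly assumed), these are the only further ingredients I would need.

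With this in place, the three ``only if'' implications reduce to one-line intersection computations. For example, if $H_i^2 = G_i$ then $G_i' = G_i \cap G_i = G_i$ and $H_i' = G_i \cap H_{i+1} = H_i$; the cases $H_i^2 = H_{i+1}$ and $H_i^2 = G_{i+1}$ are entirely analogous, in each case the conclusion being forced by $H_{i+1} \cap G_i = H_i$.

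For the ``if'' implications I would exploit the factorization $G_{i+1} = H_{i+1} G_i$. Given $g \in H_i^2$, write $g = h g'$ with $h \in H_{i+1}$ and $g' \in G_i$. Under the hypotheses of the first equivalence we have $g' \in G_i = G_i' \subseteq H_i^2$, hence $h = g(g')^{-1} \in H_i^2 \cap H_{i+1} = H_i' = H_i$, which forces $g \in H_i G_i = G_i$; the reverse inclusion $G_i \subseteq H_i^2$ is just $G_i' = G_i$. The second equivalence is the mirror of this: here $h \in H_{i+1} = H_i' \subseteq H_i^2$ gives $g' \in G_i \cap H_i^2 = G_i' = H_i$ and hence $g \in H_{i+1} H_i = H_{i+1}$. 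The third equivalence is most direct, since $G_i = G_i' \subseteq H_i^2$ and $H_{i+1} = H_i' \subseteq H_i^2$ together yield $G_{i+1} = H_{i+1} G_i \subseteq H_i^2$, and the reverse inclusion is automatic.

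The one point that deserves care is matching the stabilizer description of the five decomposition groups in the diamond with the construction performed in \cite{CVo1} for each of the five filtration inclusions, so that the equalities $G_i' = H_i^2 \cap G_i$ and $H_i' = H_i^2 \cap H_{i+1}$ really do hold on the nose. This amounts to bookkeeping against the definitions, not to a genuine obstacle; once it is set up, the whole proof is essentially a Dedekind-style manipulation inside $G_{i+1}$ using $G_{i+1} = H_{i+1} G_i$ and $H_{i+1} \cap G_i = H_i$.
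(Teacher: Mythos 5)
Your argument is correct, and it rests on the same foundation as the paper's proof, namely the stabilizer description of the decomposition groups established in the lemma immediately preceding the proposition (which is legitimate here because $H_i = H\cap G_i$ is normal in $G_{i+1}$, and which does yield $G_i' = H_i^2\cap G_i$ and $H_i' = H_i^2\cap H_{i+1}$ on the nose, as well as $H_{i+1}\cap G_i = H\cap G_{i+1}\cap G_i = H_i$). Where you diverge is in the logical organization: the paper proves the third equivalence directly (forward via $H_i'G_i'\subset H_i^2$, backward via the stabilizer description) and then deduces the first two from the third together with the maximality assumption, i.e.\ the absence of subgroups strictly between $G_i$ and $G_{i+1}$ (resp.\ $H_i$ and $H_{i+1}$), arguing that $H_i^2\supseteq G_i$ forces $H_i^2\in\{G_i,G_{i+1}\}$ and then excluding $G_{i+1}$. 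Your version instead handles all three equivalences uniformly by the Dedekind-style computation $g = hg'$ in $G_{i+1}=H_{i+1}G_i$ combined with the two intersection identities. This buys two things: it needs only the factorization $G_{i+1}=H_{i+1}G_i$ and not the stronger ``no intermediate subgroups'' consequence of maximality (which for a general simple factor $G_{i+1}/G_i$ is not automatic, though it is harmless in the paper's $p$-group setting), and it makes visible that $H_{i+1}\cap G_i = H_i$ is precisely what forces $H_i'=H_i$ in the first case. The trade-off is that the paper's route is shorter once the third equivalence is in hand. Both are complete proofs.
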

\begin{proof}
Observe that $H_i' G_i' \subset H_i^2$. 
Suppose that $G_i' = G_i$ and $H_i' = H_{i+1}$, then $G_{i+1} = G_iH_{i+1} \subset H_i^2$. Conversely, if $H_i^2 = G_{i+1}$, then for all $g \in G_1,~gS \cong S$ so $G_i' = G_i$ and similarly we have $H_i' = H_{i+1}$. This covers the last equivalence. The first two equivalences are symmetric, so we only prove the first one. If $H_i^2 = G_i$ then automatically $G_i' = G_i$ and $H_i'$ must be $H_i$ by the third equivalence and our maximality condition. Conversely, we have $H_i'G_i' = G_i \subset H_i^2$ and there are no proper subgroups in between $G_i$ and $G_{i+1}$ so $H_i^2 = G_i$.
\end{proof}

The previous proposition indicates that the two smaller decomposition groups $G_i'$ and $H_i'$ almost determinate the group $H_i^2$. The interesting situation we have to study deeper is when $H_i^2$ equals $H_i$ or some group $F$ lying strictly between $H_i \subsetneq F \subsetneq G_{i+1}$. In many cases however, this situation does not arise:

\begin{lemma}\lemlabel{center}
In the situation above, if $H_i \subset Z(G_{i+1})$ then the three decomposition groups $H_i',~G_i'$ and $H_i^2$ are all maximal, that is, they are $H_{i+1},~G_{i}$ and $G_{i+1}$ respectively.
\end{lemma}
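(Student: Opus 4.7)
The plan is to combine the characterization of the decomposition group given in the preceding lemma with the elementary observation that a centralizing element acts trivially on a conjugate module. In detail, the lemma just above identifies the decomposition group of a simple $KH_i$-module $S$, with respect to an inclusion of normal subgroups, as the set of ambient elements $g$ for which $gS \cong S$ as $KH_i$-modules. Each of $H_i'$, $G_i'$ and $H_i^2$ fits this pattern, being the decomposition group of $S$ inside $H_{i+1}$, $G_i$ and $G_{i+1}$ respectively, and in every case $H_i$ is normal in the ambient group since the chains on $G$ and $H$ consist of normal subgroups.

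The key step is then to exploit the hypothesis $H_i \subset Z(G_{i+1})$. For any $g \in G_{i+1}$ and any $h \in H_i$ we have $g^{-1}hg = h$, so the twisted action defining the $KH_i$-module $gS$ reduces to $h \cdot s = (g^{-1}hg)\,s = hs$. Thus $gS$ is literally equal to $S$, and in particular $gS \cong S$, for every $g \in G_{i+1}$.

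Putting these two pieces together, every element of each of the ambient groups $H_{i+1}$, $G_i$, and $G_{i+1}$ lies in the respective decomposition group of $S$. Hence
\[
H_i' = H_{i+1}, \qquad G_i' = G_i, \qquad H_i^2 = G_{i+1},
\]
which is the maximality statement. The argument is essentially tautological once the definitions are unwound; the only mild subtlety worth noting is that the $gS \cong S$ description of the decomposition group applies verbatim to all three filtration inclusions and not merely to the Clifford pair $(f2) \subset (f4)$ explicitly recalled above, but this is automatic from the normality of the $H_j$ and $G_j$. There is no real obstacle to overcome.
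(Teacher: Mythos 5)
Your proof is correct and follows essentially the same route as the paper: both arguments come down to the observation that centrality of $H_i$ in $G_{i+1}$ forces every conjugate $gS$ to be isomorphic to $S$ as an $H_i$-module (the paper writes this as $h\cdot h_jS = h_j(h\cdot S)$ for the summands of $KH_{i+1}S$, while you phrase it via the twisted action together with the characterization of the decomposition group as $\{g : gS\cong S\}$ from the preceding lemma). The only quibble is that ``$gS$ is literally equal to $S$'' should be ``$gS\cong S$ via $s\mapsto gs$,'' since the underlying subspaces need not coincide; this does not affect the argument.
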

\begin{proof}
Let us proof this for $H_i'$, the other two cases being completely analogous. Decompose $KH_{i+1}S = S \oplus h_2S \oplus \ldots \oplus h_rS$ into simple $H_i$-modules for some $h_j\in H_{i+1} - H_i$. Since $H_i \subset Z(G)$, $H_i$ commutes with the appearing $h_j$ so for $h \in H_i$ we have $h \cdot h_jS = hh_jS = h_j(h\cdot S)$, which shows that $S$ and $h_jS$ are isomorphic $H_i$-modules. Hence $KH_{i+1}S = R_1$ and it follows that $H_i' = H_{i+1}$.
\end{proof}

\begin{example}\exalabel{q8}
In \cite{CVo1} we looked at the following  graph of groups
$$\begin{array}{ccc}
\mathbb{Z}_4^j = \{1,j,-1,-j\} & \triangleleft& Q_8 = <-1,i,j | i^2 = j^2 =  -1, ij = -ji >\\
\triangledown & & \triangledown\\
\mathbb{Z}_2 = \{1,-1\} & \triangleleft & \mathbb{Z}_4^i = \{1,i,-1,-i\}
\end{array}$$
and we studied the following irreducible glide representation
$$ \Omega = M = U \oplus T_3 \oplus T_2 \supseteq V^{-i} \oplus T_3 \oplus T_2 \supseteq \Delta \supset 0 \supset \ldots,$$
where the $T_i$ denote the four 1-dimensional $Q_8$-representations, given by
$$\begin{array}{c}
T_1 : i \mapsto 1,~ j \mapsto 1 \\
T_2 : i \mapsto -1,~ j \mapsto 1 \\
T_3 : i \mapsto 1,~ j \mapsto -1 \\
T_4 : i \mapsto - 1,~ j \mapsto - 1 
\end{array}$$
and $U$ is the simple 2-dimensional representation
$$ U: i \mapsto \begin{pmatrix} i & 0 \\ 0 & -i \end{pmatrix},~ j \mapsto \begin{pmatrix} 0 & - 1 \\ 1 & 0 \end{pmatrix}$$
Under base change 
$$\begin{array}{c}
e_1 = f_1 + if_2\\
e_2 = f_1 - i f_2
\end{array}$$
we get $U = V^{-i} \oplus V^i$, where $V^{i}$ is the simple $\mathbb{Z}_4$-representation defined by $ j \mapsto i$ and similarly for $V^{-j}$. Observe that $M$ is indeed irreducible by \theref{irred}.
We showed that $\{\mathbb{C}e_1, \mathbb{C}(t_3+t_2)\}$, where $t_3 \in T_3, ~t_2 \in T_2$,  is a minimal set of building blocks and after some calculations we arrive at the following decomposition groups:

 $$\mathbb{C}e_1:~ \begin{array}{ccc}
 & Q_8 &\\
 \mathbb{Z}_4^i & Q_8 &  \mathbb{Z}^j_4\\
 &  \mathbb{Z}_4^j &
 \end{array} \quad \mathbb{C}(t_3+t_2): ~ \begin{array}{ccc}
 & Q_8 &\\
  \mathbb{Z}_4^i & Q_8 & Q_8\\
  &  \mathbb{Z}^j_4 & 
  \end{array}$$
  
$H_1 = \{1,-1\} = Z(Q_8)$, so the three lower decogroups are fixed. But we see that for the two others, there are some differences. 
\end{example}

Let us look at the following situation for our diagram

\begin{equation}\label{situatie1}
\begin{array}{ccccc}
H_{2} & \subset & H_2 & \subset & G_{2}\\
\cup &&  & \dsubsetneq~~~& \cup\\
H_1 & & F && G_i''\\
\cup &~~~ \dsubsetneq &&& \cup\\
H_1 & \subset &H_1 & \subset & G_1 
\end{array} 
\end{equation}
\\
Start with a simple $H_1$-module $S$ in some irreducible glider representation $M$ and decompose $KH_2S = S \oplus h_2S \oplus \ldots \oplus h_rS$. Since $H_1' = H_1$, we have that $h_iS = R_i,~ i=1,\ldots,~r$, whence $[H_2:H_1] = r$. We do the same for 
\begin{eqnarray*}
KG_2S &=& KH_2S \oplus g_2KH_2S \oplus \ldots \oplus g_mKH_2S\\
&=& S \oplus h_2S \oplus \ldots \oplus h_rS \oplus g_2h_2S \oplus \ldots \oplus g_mh_rS.
\end{eqnarray*}
Again, since $G_2' = H_2$ we have $[G_2:H_2] = m$ and as $H_1^2 = F$ is proper, there is some $m' \leq m$ such that (up to reordering) $R^{H_1^2}_1 = S \oplus g_2h_{j(2)}S \oplus \ldots \oplus g_{m'}h_{j(m')}S$. Hence we have that 
$$[G_2:F] = \frac{mr}{m'} = \frac{[G_2:H_2][H_2:H_1]}{m'} = \frac{[G_2:H_1]}{m'}.$$

Assume now that $G_2$ is a $p$-group. Then $p = [G_2:F] = \frac{p^2}{m'}$, hence $m' = p = m$. Moreover, since $G_2' = H_2$, we have that $g_iH_2 \cap g_jH_2 = \emptyset$ for $1 \leq i \neq j \leq p$, so $\{H_2,~g_2H_2,\ldots,~g_pH_2\}$ is a full set of left cosets of $H_2$ in $G_2$. We have that $[F:H_1] = p$, hence we can find $f_i$, $i=1,\ldots, p$ forming a full set of representatives. Write $f_i = g_{\alpha(i)}h_{\beta(i)}$. If $\alpha(i) = \alpha(j)$ for some $i \neq j$, then $h_{\beta(i)}S \cong h_{\beta(j)}S$, or $h_{\beta(i)}h^{-1}_{\beta(j)} \in H_1$. However, $f_i^{-1}f_j  = h^{-1}_{\beta(i)}h_{\beta(j)} \notin H_1$, a contradiction. Therefore all the $g_iKH_2S$ have one component isomorphic to $S$, whence they have the same decomposition as $H_1$-modules (since $H_2$ is normal in $G_2$). So we have proven

\begin{proposition}\proplabel{4.2}
Let $G_2$ be a $p$-group and $S$ some building block such that its associated decomposition graph takes the form \eqref{situatie1}. Then $H_1$ has at least $p$ non-isomorphic irreducible representations of degree $\dim(S)$, call these $S_1,\ldots,~S_p$. Moreover, there are at least $p$ non-isomorphic $H_2$-modules that decompose into $\oplus_{i=1}^p S_i$.
\end{proposition}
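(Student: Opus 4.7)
The plan is to extract both families of modules directly from the decompositions
$$KH_2 S = S \oplus h_2 S \oplus \cdots \oplus h_r S \quad \text{and} \quad KG_2 S = g_1 KH_2 S \oplus \cdots \oplus g_m KH_2 S,$$
using the normality of $H_2$ in $G_2$ together with the constraints imposed by \eqref{situatie1}. Since $G_2$ is a $p$-group and the chain $H_1 \triangleleft H_2 \triangleleft G_2$ is maximal, both $[H_2:H_1]$ and $[G_2:H_2]$ equal $p$, so I would aim to produce exactly $p$ conjugates on each floor.

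First I would take $S_i := h_i S$, with $h_1 = 1$ and $h_2,\ldots,h_p$ a transversal of $H_1$ in $H_2$. The hypothesis $H_1' = H_1$ in \eqref{situatie1} means precisely that no two distinct $h_i S$ are isomorphic as $H_1$-modules, otherwise the lower decomposition group would be strictly larger than $H_1$. All $S_i$ are simple of common dimension $\dim(S)$ because they are $H_2$-conjugates of the simple module $S$. This already gives the first claim.

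Next I would set $T_i := g_i KH_2 S$ for a transversal $g_1 = 1, g_2,\ldots,g_p$ of $H_2$ in $G_2$. The hypothesis $G_2' = H_2$ gives immediately that $T_1,\ldots,T_p$ are pairwise non-isomorphic as $H_2$-modules. What remains is to show that each $T_i$, restricted to $H_1$, is isomorphic to $\bigoplus_{j=1}^p S_j$. Since $H_2 \triangleleft G_2$, conjugation by $g_i$ turns the decomposition $KH_2 S = \bigoplus_j h_j S$ into an $H_1$-decomposition of $T_i$ whose summands are again $H_1$-conjugates of $g_i S$. I would then plug in the $F$-argument given just before the proposition: pick representatives $f_1,\ldots,f_p$ for $H_1$ in $F$, write $f_k = g_{\alpha(k)} h_{\beta(k)}$, and use that $f_k^{-1} f_\ell \notin H_1$ to conclude that $\alpha$ is injective. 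This forces each coset $g_i H_2$ to contribute exactly one copy of $S$ to the $F$-homogeneous component containing $S$, so every $T_i$ contains an $H_1$-summand isomorphic to $S$, and hence by normality its full list of $H_1$-summands is a permutation of $\{S_1,\ldots,S_p\}$.

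The main obstacle will be this last bookkeeping step: ensuring that the multiset of $H_1$-constituents of $T_i$ is exactly $\{S_1,\ldots,S_p\}$ rather than some other family of simple $H_1$-modules of the same total dimension. The key ingredient making it work is the presence of the intermediate group $F = H_1^2$ with $H_1 \subsetneq F \subsetneq G_2$: the proper inclusion $F \subsetneq G_2$ is what, via the injectivity of $\alpha$, pins down one copy of $S$ in each $T_i$, after which normality of $H_2$ propagates this to the full set of $S_j$.
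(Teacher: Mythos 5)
Your proposal is correct and follows essentially the same route as the paper, whose proof of \propref{4.2} is precisely the discussion preceding it: the non-isomorphic $H_1$-conjugates $h_iS$ come from $H_1'=H_1$, the non-isomorphic $H_2$-modules $g_iKH_2S$ from $G_2'=H_2$, and the injectivity of $\alpha$ (forced by $H_1\subsetneq F\subsetneq G_2$) places one copy of $S$ in each $g_iKH_2S$, after which normality of $H_2$ gives the common $H_1$-decomposition $\oplus_{j=1}^p S_j$.
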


Next, we consider the situation

\begin{equation}\label{situatie2}
\begin{array}{ccccc}
H_{2} & \subset & G_2 & \subset & G_{2}\\
\cup &&  & \dsubsetneq~~~& \cup\\
H_1 & & F && G_1''\\
\cup &~~~ \dsubsetneq &&& \cup\\
H_1 & \subset &H_1 & \subset & G_1 
\end{array} 
\end{equation}
\\
To begin with, $H_1' = H_1$ means that we can find $p$-elements $h_i \in H_2 - H_1$ such that $KH_2S = S \oplus h_2S \oplus \ldots \oplus h_pS$. In fact, since $|H_2/H_1| = p$, the factor group is cyclic and we can choose the elements $h_i$ such that $h_i = h_2^{i-1}$ for $i=2, \ldots, p$ and $h_1 = e$. There are two situations when $G_2' = G_2$ occurs. Indeed, either $KG_2S = KH_2S$, saying that $KH_2S$ is already a $G_2$-representation, or either $KG_2S = KH_2S \oplus g_2KH_2S \oplus \ldots \oplus g_pKH_2S$ for $p$ elements $g_i \in G_2 - H_2$. In the first situation however, $KG_2S = S \oplus h_2S \oplus \ldots \oplus h_pS$ and $H_1 \subsetneq F \subsetneq G_2$ contradicts $H_1'$ being equal to $H_1$. So 
$$KG_2S = (S \oplus h_2S \oplus \ldots \oplus h_pS) \oplus g_2(S \oplus h_2S \oplus \ldots \oplus h_pS) \oplus \ldots$$
Clearly, all the $g_iKH_2S$ have the same decomposition into $H_1$-modules (as was the case in situation \eqref{situatie1}), so the only difference between situations \eqref{situatie1} and \eqref{situatie2} lies in the fact the $g_iKH_2S$ are isomorphic $H_2$-modules or not. Let us treat both cases simultaneously and see what happens.\\

For some $\alpha: \{1,\ldots,~p \} \to \{1,\ldots,p\}$, we have:
$$S \cong g_2h_{\alpha(2)}S \cong \ldots \cong g_ph_{\alpha(p)}S.$$
 Since the factor group $G_2/H_1$ is of order $p^2$, it is isomorphic to either $C_{p^2}$ or $C_p \times C_p$. However, clearly $F/H_1, H_2/H_1$ and $G_1/H_1$ are different subgroups of order $p$ in $G_2/H_1$, so since $C_{p^2}$ has only one subgroup of order $p$, $G_2/H_1$ must be isomorphic to $C_p \times C_p$.\\
 
Of course we have that $\alpha(1) = 1$. Suppose that $\alpha(i) = 1$ for some $2 \leq i \leq p$, then $g_i = g_2^{i-1} \in F$, whence $g_j \in F$ for all $1 \leq j \leq p$ and it follows that $\alpha$ maps to $1$. If $\alpha(i) \neq 1$ for all $2 \leq i \leq p$, then $\alpha$ must be bijective, otherwise $g_2^i \in F$ for some $i$, meaning that $\alpha(i) = 1$. Up to changing $g_2$ by $g_2h_{\alpha(2)}$ we may assume that $\alpha = 1$ and we see that $F/H_1 = <\ov{g_2}>$. Going the other way, that is, via $G_1$, we find a $z_2 \in G_2 - G_1$ such that $F/H_1 = <\ov{z_2}>$. Hence $z_2H_1 = g_2H_1$ and we have that $z_2 \notin H_2, g_2 \notin G_1$.

\begin{proposition}\proplabel{4.3}
Let $G_2$ be a $p$-group and $S$ some building block such that its associated decomposition graph takes the form \eqref{situatie1} or \eqref{situatie2}. Then $G_2/H_1 \cong C_p \times C_p$ and we find elements $g, z \in G_2 - (G_1 \cup H_2)$ such that $F/H_1 = <g > = <z>$. 
\end{proposition}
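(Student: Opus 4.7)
The plan is to first pin down the order of $G_2/H_1$ and then produce the required generators. From the preceding discussion we have $[G_2:F] = p$ and $[F:H_1] = p$, hence $|G_2/H_1| = p^2$. Inside this group I would distinguish three subgroups of order $p$, namely $F/H_1$, $H_2/H_1$ and $G_1/H_1$. These are pairwise distinct: by hypothesis $H_2 \neq G_1$ (they sit on different sides of the square), $F \neq H_2$ since the diagram is of the form \eqref{situatie1} or \eqref{situatie2} with $H_1 \subsetneq F \subsetneq G_2$ satisfying $F \not\subseteq H_2$, and symmetrically $F \neq G_1$. Because $C_{p^2}$ has a unique subgroup of order $p$, the existence of three different subgroups of order $p$ in $G_2/H_1$ forces $G_2/H_1 \cong C_p \times C_p$.

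Next I would construct the element $g$. The decomposition $KG_2S = KH_2S \oplus g_2 KH_2S \oplus \ldots \oplus g_p KH_2S$ with $g_i \in G_2 - H_2$ was analysed in the paragraph above the proposition: the map $\alpha$ encoding which $H_1$-component of $g_iKH_2S$ is isomorphic to $S$ is either constant equal to $1$ or bijective, and in the bijective case the substitution $g_2 \mapsto g_2 h_{\alpha(2)}$ reduces to the constant case. Thus one may take $g := g_2 \in G_2 - H_2$ with $F/H_1 = \langle \bar g \rangle$. To verify $g \notin G_1$, I would argue that if $g \in G_1$ then $\langle \bar g \rangle \subseteq G_1/H_1$, forcing the equality $F/H_1 = G_1/H_1$ (both have order $p$) and hence $F = G_1$, contradicting $F \neq G_1$. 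Hence $g \in G_2 - (G_1 \cup H_2)$.

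The element $z$ is obtained by running the same argument on the other side of the square, decomposing $KG_2S$ through $KG_1S$ instead of $KH_2S$: one gets $z \in G_2 - G_1$ with $F/H_1 = \langle \bar z \rangle$, and the symmetric argument (using $F \neq H_2$ in place of $F \neq G_1$) shows $z \notin H_2$. I do not expect any real obstacle here; the substantive content, the classification of $\alpha$ and the reduction to $\alpha = \mathrm{id}$, has already been carried out in the text preceding the proposition, so the proof is essentially a clean packaging of those observations together with the elementary fact about subgroups of $C_{p^2}$ versus $C_p \times C_p$.
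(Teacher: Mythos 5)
Your argument is correct and follows the paper's own route essentially verbatim: the paper likewise deduces $G_2/H_1 \cong C_p \times C_p$ from the fact that $F/H_1$, $H_2/H_1$ and $G_1/H_1$ are three distinct subgroups of order $p$ while $C_{p^2}$ has only one, and it produces $g$ and $z$ from the $\alpha$-analysis applied on both sides of the square exactly as you describe. The only step you state without real justification is $F \not\subseteq H_2$ (and symmetrically $F \not\subseteq G_1$); the clean reason, available from the stabilizer description of decomposition groups, is that $F \cap H_2 = H_1'$ and $F \cap G_1 = G_1'$ both equal $H_1 \subsetneq F$ in situations \eqref{situatie1} and \eqref{situatie2}.
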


\begin{example}
Assume $p = 5$ and $H_1$ some $5$-group. Consider $G_2 = H_1 \rtimes (C_5 \times C_5)$, a semi-direct product defined by some group morphism $\varphi: C_5 \times C_5 \to \Aut(H_1)$, i.e. 
$$ (h,a^i, a^j) \cdot (h',a^k,a^l) = (h \varphi((a^i,a^j))(h'), a^{i+k},a^{j+l}),$$
where $C_5 = <a>$. then we look at

$$\begin{array}{ccccc}
H_1 \rtimes <(a^2,a)> & \subset & G_2' & \subset & H_1 \rtimes (C_5 \times C_5)\\
\cup &&  & \dsubsetneq~~~& \cup\\
H_1 & & F && G_1''\\
\cup &~~~ \dsubsetneq &&& \cup\\
H_1  & \subset &H_1 & \subset & H_1 \rtimes <(a,a^2)>
\end{array} $$
Suppose that if we take $g = g_2 = (e,a^3,a^2)$ that $\alpha = 1$. Then $F/H_1 = <\ov{g_2}>$, or $F = H_1 \sqcup H_1g \sqcup \ldots \sqcup H_1g^{p-1}$. Define a map
$$f: F \to H_1 \rtimes <\ov{g}>,~ f \mapsto (h,\ov{g}^i),$$
if $f = hg^i$ and where the semi-direct product structure $H_1 \rtimes <\ov{g}>$ is defined by $\varphi_{|<(a^3,a^2)>}$. We calculate
\begin{eqnarray*} 
f(hg^i)f(h'g^j) &=& (h,\ov{g}^i)(h',\ov{g}^j) \\
&=& (h\varphi(\ov{g}^i)(h'), \ov{g}^{i+j})\\
f(hg^ih'g^j) &=& f(h\varphi(g^i)(h')g^{i+j}) \\
&=& (h\varphi(\ov{g}^i)(h'), \ov{g}^{i+j}),
\end{eqnarray*} 
which shows that $f$ is a group morphism, which is easily seen to be surjective, hence bijective. Moreover, 
$$H_1 \rtimes <\ov{g}> = H_1 \rtimes F/H_1 \subset H_1 \rtimes G_2/H_1 =H_1 \rtimes G_2/H_2 \times G_2/G_1,$$
so $F = H_1 \rtimes <(a^3,a^2)>$. 
\end{example}

Observe that, in the previous example, if we drop the semi-direct product with $H_1$, then everything is abelian and all decomposition groups would be maximal by \lemref{center}. In fact, the only possibility to have situation \eqref{situatie1} or \eqref{situatie2} is when $G_2 \cong H_1 \rtimes (C_p \times C_p)$. Indeed

\begin{theorem}\thelabel{4.4}
Let $G_2$ be a $p$-group and $S$ some building block such that its associated decomposition graph takes the form \eqref{situatie1} or \eqref{situatie2}. Then $G_2 \cong H_1 \rtimes (C_p \times C_p)$ where the semi-direct product is not a direct product. Moreover $F = H_1 \rtimes <\ov{g}> = H_1 \rtimes <\ov{z}>$ for some $g, z \in G_2 - (H_2 \cup G_1)$ such that $F \cong H_1 \rtimes (<\ov{g},\ov{z}>) \subset H_1 \rtimes G_2/H_2 \times G_2/G_1$.
\end{theorem}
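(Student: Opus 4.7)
My plan is to build directly on the output of \propref{4.3}, which supplies both the isomorphism $G_2/H_1 \cong C_p \times C_p$ and the distinguished elements $g, z \in G_2 \setminus (H_2 \cup G_1)$ whose classes generate $F/H_1$. First I would verify that $H_2 \cap G_1 = H_1$: indeed $H_1 \subseteq H_2 \cap G_1 \subsetneq H_2$ (since $H_2 \neq G_1$), and no subgroup strictly lies between $H_1$ and $H_2$ because $[H_2 : H_1] = p$. Therefore the diagonal quotient $G_2/H_1 \to G_2/H_2 \times G_2/G_1$ is injective, and by order count an isomorphism. This already identifies the abstract target $C_p \times C_p$ with $G_2/H_2 \times G_2/G_1$.

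The heart of the proof is to lift this quotient identification to a subgroup $P \leq G_2$ with $P \cong C_p \times C_p$, $P \cap H_1 = \{e\}$ and $PH_1 = G_2$. For any $g \in G_2$ with $\ov g$ nontrivial in $G_2/H_1$ we have $g^p \in H_1$, and the plan is to replace $g$ by $gh$ for a suitable $h \in H_1$ so that $(gh)^p = e$. This is a cohomological question: the class of $g^p$ in $H_1^{\langle g\rangle}/N_g(H_1)$, the cokernel of the norm for the $g$-conjugation action, must vanish. The rigidity coming from the building block --- recall from the analysis preceding \propref{4.3} that $g$ was chosen so that $gS \cong S$ as $H_1$-modules --- constrains this conjugation action enough to force the obstruction to vanish. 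Applying the analogous modification to a lift of the complementary generator of $G_2/H_1$ produces a second order-$p$ element $z'$; the two elements commute mod $H_1$ because $G_2/H_1$ is abelian, and once $\langle g, z'\rangle \cap H_1 = \{e\}$ is checked, their commutator (which lies in both $H_1$ and $\langle g, z'\rangle$) is forced to be trivial. The resulting $P = \langle g, z'\rangle$ then has order $p^2$, is abelian, and so is $\cong C_p \times C_p$.

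With $P$ in hand, the decomposition $G_2 = H_1 \rtimes P$ is immediate, and the product is not direct: were it direct, $H_1$ would sit in $Z(G_2)$ and \lemref{center} would force every decomposition group to be maximal, contradicting $G_i' = H_1 \neq G_1$ in both \eqref{situatie1} and \eqref{situatie2}. For the \emph{moreover} clause, with our modified $g$ of order $p$ we have $\langle g\rangle \cap H_1 = \{e\}$, so $F = H_1 \langle g\rangle \cong H_1 \rtimes \langle \ov g\rangle$; the parallel argument starting from the $G_1$-side (the element $z$ produced in the proof of \propref{4.3}) yields $F \cong H_1 \rtimes \langle \ov z\rangle$. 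Transporting across the identification $G_2 \cong H_1 \rtimes (G_2/H_2 \times G_2/G_1)$ from the first paragraph, $F$ corresponds to $H_1 \rtimes \langle \ov g, \ov z\rangle$ as a subgroup of $H_1 \rtimes (G_2/H_2 \times G_2/G_1)$, where $\langle \ov g, \ov z\rangle = F/H_1$ is the common cyclic subgroup of order $p$ generated by either class.

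The hard step is unquestionably the splitting in the second paragraph. For a normal subgroup of a $p$-group complements are the exception rather than the rule, so one cannot appeal to any Schur--Zassenhaus-type general result. What saves the argument is that the hypothesis on the decomposition graph supplies more than an abstract extension: it supplies the Clifford-theoretic data $gS \cong S$ that restricts the $g$-conjugation action on $H_1$ and, in particular, the possible values of $g^p$. Verifying in detail that this rigidity makes the relevant norm cokernel vanish --- and doing so uniformly in the two situations \eqref{situatie1} and \eqref{situatie2} --- will be the most delicate point in carrying out the argument.
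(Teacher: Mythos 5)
Your skeleton matches the paper's: both build on \propref{4.3}, both identify $G_2/H_1$ with $G_2/H_2\times G_2/G_1$ via the diagonal map (your explicit check that $H_2\cap G_1=H_1$ is a welcome addition), and both derive non-directness from \lemref{center}. The problem is that the step you yourself single out as ``the hard step'' --- producing an order-$p$ lift of $\ov{g}$ so that $1\to H_1\to F\to C_p\to 1$, and then $1\to H_1\to G_2\to C_p\times C_p\to 1$, actually split --- is exactly the step you do not carry out. You assert that the Clifford-theoretic rigidity $gS\cong S$ forces the relevant obstruction to vanish, but you give no argument for this, and it is not obviously true. Moreover, the framing you use (the class of $g^p$ in the cokernel of the norm $H_1^{\langle g\rangle}/N_g(H_1)$) only makes sense when $H_1$ is abelian, which is not among the hypotheses; for nonabelian $H_1$ the expansion of $(gh)^p$ does not reduce to $g^p$ times a norm, and the complement question becomes a nonabelian-cohomology problem. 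The example of $C_{p^2}\times C_p$ over its subgroup $pC_{p^2}\times 1$ shows that extensions of $C_p\times C_p$ by $C_p$ need not split in general, so some input from the hypotheses really is required here. As written, the proposal therefore has a genuine gap at its center.

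For comparison: the paper does not supply this argument either. Its proof passes directly from $F/H_1=\langle\ov{g}\rangle=\langle\ov{z}\rangle$ to $F=H_1\rtimes\langle\ov{g}\rangle$ with no justification of the splitting; the explicit isomorphism $f(hg^i)=(h,\ov{g}^{\,i})$ constructed in the example preceding the theorem is a homomorphism only when $g^p=e$, which holds there because that $G_2$ was built as a semidirect product to begin with. So you have correctly located the weak point of the statement, but locating it is not the same as repairing it: either you must actually prove that some $h\in H_1$ makes $(gh)^p=e$ under the hypotheses of \eqref{situatie1} and \eqref{situatie2}, handling nonabelian $H_1$, or your write-up should not advertise a cohomological vanishing that it never establishes.
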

\begin{proof}
By our observations above, we can find elements $z, g \in G_2 - (H_2 \cup G_1)$ such that $F/H_1 = <\ov{g}> = <\ov{z}>$. This shows that $F = H_1 \rtimes <\ov{g}> = H_1 \rtimes <\ov{z}>$. We have that $G_2/H_1 \cong G_2/H_2 \times G_2/G_1$ and the isomorphism maps $\ov{g}$ to $(\ov{g},\ov{z})$. Hence
$$F = H_1 \rtimes <\ov{g}> \cong H_1 \rtimes <(\ov{g},\ov{z})> \subset H_1 \rtimes (<\ov{g}> \times <\ov{z}>) = H_1 \rtimes (G_2/H_2 \times G_2/G_1) \cong G_2$$
This also shows that the semi-direct product is not a direct product, otherwise the decomposition group $H_1^2 = G_2$, because in this case $H_1$ commutes with $ \{e\} \times <\ov{g}> \times <\ov{z}>$.
\end{proof}

\begin{corollary}
Let $G_2$ be a $p$-group and $S$ some building block such that its associated decomposition graph takes the form \eqref{situatie1} or \eqref{situatie2}. Then $H_1 \neq C_p$.
\end{corollary}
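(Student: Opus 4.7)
The plan is to derive the conclusion directly from \theref{4.4}, which forces $G_2$ to be a \emph{non-trivial} semi-direct product $H_1 \rtimes (C_p \times C_p)$, i.e.\ to arise from a non-trivial homomorphism $\varphi : C_p \times C_p \to \Aut(H_1)$. Assuming for contradiction that $H_1 = C_p$, I would then show that no such non-trivial $\varphi$ can exist, so the semi-direct product must in fact be a direct product, contradicting \theref{4.4}.

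Concretely, first I would invoke \theref{4.4} to fix the isomorphism $G_2 \cong H_1 \rtimes (C_p \times C_p)$ together with the extra information that this semi-direct product is not a direct product, i.e.\ the defining action is non-trivial. Next, under the assumption $H_1 \cong C_p$, I would use the standard fact that $\Aut(C_p) \cong C_{p-1}$. Any homomorphism $\varphi : C_p \times C_p \to \Aut(C_p)$ has image of order dividing both $|C_p \times C_p| = p^2$ and $|\Aut(C_p)| = p-1$; since $\gcd(p^2, p-1) = 1$, this image must be trivial. Therefore $\varphi$ is the trivial map and $H_1 \rtimes_{\varphi} (C_p \times C_p)$ collapses to $H_1 \times (C_p \times C_p)$, i.e.\ a direct product.

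This contradicts the non-direct-product conclusion of \theref{4.4}, so the assumption $H_1 \cong C_p$ is untenable and the corollary follows. I do not expect a real obstacle here: the argument is essentially a coprimality remark on $|\Aut(C_p)|$ versus $|C_p \times C_p|$, and the only thing to be careful about is citing \theref{4.4} precisely for the non-triviality of the semi-direct product rather than merely for its existence.
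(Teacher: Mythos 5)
Your argument is correct and is essentially the same as the paper's own proof: both reduce to the observation that $|\Aut(C_p)| = p-1$ is coprime to $|C_p \times C_p| = p^2$, so any homomorphism $C_p \times C_p \to \Aut(H_1)$ is trivial, contradicting the non-trivial semi-direct product structure forced by \theref{4.4}. Your write-up is in fact a bit more careful than the paper's, since you explicitly note that what is contradicted is the non-triviality of the action rather than the mere existence of a semi-direct product decomposition.
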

\begin{proof}
The order of the automorphism group $|\Aut(H_1)| = \varphi(p) = p-1$, so there are no non-trivial group morphism $f: C_p \times C_p \to \Aut(H_1)$, so $G_2$ woud not be a semi-direct product.
\end{proof}

The last situation we have to consider is when $H_i^2 = H_i$, i.e. 

\begin{equation}\label{situatie3}
\begin{array}{ccccc}
H_{2} & \subset & G_2' & \subset & G_{2}\\
\cup &&  & \dsubset~~~& \cup\\
H_1 & & H_1 && G_1''\\
\cup &~~~ \dsubset &&& \cup\\
H_1 & \subset &H_1 & \subset & G_1 
\end{array} 
\end{equation}

We have the decomposition of $KG_2S$ into $p^2$ irreducible $H_1$-representations. This shows for example that $p^2\dim(S) \leq |H_1|$. In particular we have that $KG_2S \supsetneq KH_2S$ and $KG_2S \supsetneq KG_1S$. So we can write
$$KG_2S = KH_2S \oplus g_2 KH_2S \oplus \ldots \oplus g_p KH_2S,$$
$$KG_2S = KG_1S \oplus z_2 KG_1S \oplus \ldots \oplus z_p KG_1S,$$
for some $g_i \in G_2 - H_2$ and $z_i \in G_2 - G_1$. If for example $KH_2S \cong g_2KH_2S$ as $H_2$-reps, then they would have the same decomposition into $H_1$-reps, which is a contradiction. Hence we arrive at

\begin{proposition}
Let $G_2$ be a $p$-group and $S$ some building block such that its associated decomposition graph takes the form \eqref{situatie3}. Then $G_2' = H_2$ and $G_1'' = G_1$.
\end{proposition}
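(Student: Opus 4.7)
The plan is to exploit the fine $H_1$-decomposition of $KG_2 S$ that has already been isolated above the proposition: since $G_1' = H_1' = H_1^2 = H_1$, restriction of $KG_2 S$ to $H_1$ breaks it as a direct sum of $p^2$ pairwise non-isomorphic simple $H_1$-summands of dimension $\dim(S)$, one of which is $S$ itself. Both equalities in the proposition will follow from this decomposition together with a routine block-comparison argument; no new global idea is needed.

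For $G_2' = H_2$, I would flesh out the hint already present in the text. Write $KG_2S = KH_2S \oplus g_2 KH_2 S \oplus \ldots \oplus g_p KH_2 S$ with $g_j \in G_2 - H_2$ coset representatives (possible since $[G_2:H_2] = p$), and $KH_2 S = S \oplus h_2 S \oplus \ldots \oplus h_p S$ with $h_k \in H_2 - H_1$ (possible since $H_1' = H_1$ forces the $p$ summands to be pairwise non-isomorphic). If $g_j KH_2 S \cong KH_2 S$ as $H_2$-modules for some $j \geq 2$, restricting to $H_1$ would force the two $H_1$-decompositions to coincide, so in particular some $g_j h_k S \cong h_\ell S$ as $H_1$-reps. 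But then $h_\ell^{-1} g_j h_k \in H_1^2 = H_1$, whence $g_j \in H_2$, contradicting the choice of $g_j$. Hence the $p$ blocks $g_j KH_2 S$ are pairwise non-isomorphic as $H_2$-modules, and only elements of $H_2$ stabilize the $H_2$-isomorphism class of $KH_2 S$; this is exactly $G_2' = H_2$.

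The proof of $G_1'' = G_1$ is completely symmetric. Decompose $KG_2 S = KG_1 S \oplus z_2 KG_1 S \oplus \ldots \oplus z_p KG_1 S$ with $z_i \in G_2 - G_1$, noting that each $z_i KG_1 S$ is a $G_1$-submodule by normality of $G_1$ in $G_2$. Since $G_1' = H_1$, the $G_1$-module $KG_1 S$ splits under $H_1$ as $S \oplus u_2 S \oplus \ldots \oplus u_p S$ for some $u_k \in G_1 - H_1$, with these $p$ simples pairwise non-isomorphic. A $G_1$-isomorphism $z_i KG_1 S \cong KG_1 S$ would restrict to an $H_1$-isomorphism matching simple summands, producing some $u_\ell^{-1} z_i u_k \in H_1^2 = H_1$ and hence $z_i \in G_1$, contradicting the choice of $z_i$. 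So no non-trivial $z_i$ preserves the $G_1$-isomorphism class of $KG_1 S$, giving $G_1'' = G_1$.

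The only care point is the symmetry of the two arguments and the observation that $H_1^2 = H_1$ is precisely what rules out any cross-isomorphism between distinct blocks of the $H_1$-decomposition; once this is recognised, both equalities drop out from the same one-line cocycle calculation in $H_1^2$.
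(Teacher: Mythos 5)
Your argument is correct and is essentially the paper's own proof: both rest on the fact that $H_1^2=H_1$ forces the $p^2$ simple $H_1$-constituents of $KG_2S$ to be pairwise non-isomorphic, so an $H_2$-isomorphism $g_jKH_2S\cong KH_2S$ (resp.\ a $G_1$-isomorphism $z_iKG_1S\cong KG_1S$) is impossible because it would match up $H_1$-constituents from disjoint blocks. You merely make the contradiction explicit via the membership $h_\ell^{-1}g_jh_k\in H_1^2=H_1$, where the paper simply notes that the two blocks would have to share their $H_1$-decomposition.
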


\section{Nilpotent groups of order $p^kq^l$}

It is well-known that a nilpotent group is the direct product of its Sylow subgroups, hence we consider the group $GH$, where $G$ is a $p$-group and $H$ is a $q$-group. If the order of $GH$ is $pq$ and $p-1$ does not divide $q$, then the group is cyclic, and it is just isomorphic to $C_p \times C_q$, which is not interesting to consider chains of subgroups. So assume that $|GH| = p^kq^l$ with $k, l > 1$. We consider the following chains of subgroups
$$\begin{array}{ccccccccc}
e & \subset & G_1 & \subset & \ldots & \subset & G_{d-1} & \subset & GH\\
&&&&&&& \dsubset & \cup\\
&&&&&& G_{d-1}H_{d-1} && H_{d-1}\\
&&&&& \dsubset & && \cup\\
&&&& \iddots &&&& \vdots\\
&&& \dsubset &&&&& \cup\\
&& G_1H_1 &&&&&& H_1\\
& \dsubset &&&&&&& \cup\\
e &&&&&&&& e
\end{array}$$

The Frobenius divisibility theorem (\cite[Theorem 4.16]{Etingof}) states that the dimension of an irreducible representation $V$ divides the order of the group , so we enlist the irreducible representations of $GH$ by
$$V_i ~ (i \in I_1)~ P_i~ (i \in I_2), ~Q_i~ (i \in I_3),~W_i~(i \in I_4),$$
where $V_i$ is 1-dimensional, $P_i$ is $p^\alpha$-dimensional, $Q_i$ is $q^\beta$-dimensional and $W_i$ is $p^\alpha q^\beta$-dimensional. Let $\Omega \supset M$ be an irreducible glider of essential length $d$. \theref{irred} shows that 
\begin{equation}\label{dec}
M = \bigoplus_i V_i \oplus \bigoplus_{i} P_i^{m_i} \oplus \bigoplus_i Q_i^{k_i} \oplus \bigoplus_i W_i^{l_i}
\end{equation}

\begin{lemma}
If $M_{H}$ is an irreducible $KH$-glider, then the $m_i$ in the decomposition \eqref{dec} are all zero.
\end{lemma}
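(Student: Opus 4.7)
The plan is to exploit the direct product structure $GH = G \times H$ arising from the decomposition of a nilpotent group into its Sylow subgroups, together with the dimension constraint (1) of \theref{irred} applied to the restricted glider $M_H$.

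First I would unpack the irreducible representations of $GH$ via the Künneth-type description for direct products: every $U \in \Irr(GH)$ has the form $U^G \ot U^H$ with $U^G \in \Irr(G)$ and $U^H \in \Irr(H)$. Combining this with the dimension labels ($1$, $p^\alpha$, $q^\beta$, $p^\alpha q^\beta$) and the fact that $\dim U^G$ is a power of $p$ while $\dim U^H$ is a power of $q$, one gets $V_i = V_i^G \ot V_i^H$ with both factors one-dimensional, $P_i = P_i^G \ot \tau_i$ where $P_i^G \in \Irr(G)$ has dimension $p^\alpha$ and $\tau_i$ is a one-dimensional $H$-representation, $Q_i = \sigma_i \ot Q_i^H$ analogously, and $W_i = P_i^G \ot Q_i^H$. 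Restricting to $H$ (i.e.\ forgetting the $G$-action) then gives
\begin{equation*}
V_i\big|_H = V_i^H, \qquad P_i\big|_H \cong \tau_i^{\,p^\alpha}, \qquad Q_i\big|_H = Q_i^H, \qquad W_i\big|_H \cong (Q_i^H)^{p^\alpha}.
\end{equation*}

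Now I would apply \theref{irred} to the $KH$-glider $M_H$: irreducibility forces the multiplicity of each one-dimensional $H$-representation in $M_H$ to be at most $1$. The one-dimensional constituents of $M_H$ arise exactly from the $V_i$-summands (each contributing one copy of the one-dimensional $V_i^H$) and from the $P_i$-summands (each contributing $p^\alpha$ copies of the one-dimensional $\tau_i$). Consequently, if some $m_i \geq 1$, the one-dimensional $H$-representation $\tau_i$ occurs in $M_H$ with multiplicity at least $p^\alpha m_i \geq p^\alpha \geq p > 1$, contradicting the irreducibility bound from \theref{irred}. Hence all $m_i$ must vanish.

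The only genuine point to check is that the restriction $M_H$ really is a glider for the chain $e \subset H_1 \subset \ldots \subset H_{d-1} \subset H$ obtained from the right edge of the diagram, and that \theref{irred} is applicable to it; this is immediate from the definition of the filtration and the fact that a generator $a$ of the $KGH$-glider also generates $M_H$ as a $KH$-glider on that chain. The main conceptual step, and the only slightly subtle one, is the dimension bookkeeping that forces the factor $\tau_i$ to be one-dimensional — once that is settled, the bound $p^\alpha > 1$ does all the work.
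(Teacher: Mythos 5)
Your proof is correct and reaches the same pivotal fact as the paper --- that the restriction of $P_i$ to $H$ is $p^\alpha$ copies of a single one-dimensional $H$-representation, so any $m_i \geq 1$ forces a one-dimensional constituent of $M_H$ with multiplicity $p^\alpha > 1$, contradicting condition (1) of \theref{irred} --- but by a different mechanism. The paper does not use the external tensor product description of $\Irr(G\times H)$ here; it first observes that the simple $H$-constituents of $P_i$ must all be one-dimensional (their common dimension is a $q$-power, yet they sum to $p^\alpha$), and then runs the Clifford/decomposition-group argument of Section 4: since elements $(g,e)$ centralize $e\times H$, the decomposition group of a constituent is all of $GH$, so all constituents are mutually isomorphic. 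You instead factor $P_i = P_i^G \ot \tau_i$ and read off $\dim\tau_i = 1$ from the dimension bookkeeping, after which the isotypic restriction $P_i|_H \cong \tau_i^{p^\alpha}$ is immediate. Your route is more self-contained and avoids Clifford theory entirely, relying only on the classification of irreducibles of a direct product --- a fact the paper itself recalls a few paragraphs later --- whereas the paper's version stays inside the decomposition-group formalism it is in the process of developing. Your closing remark, that one must check $M_H$ is genuinely a glider for the chain $e\subset H_1\subset\ldots\subset H$ generated by the same $a$ so that \theref{irred} applies, is exactly the point the paper leaves implicit; both proofs are sound.
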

\begin{proof}
In the decomposition of the $GH$-module $M$ as an $H$-module, there appears the decomposition of $P_i$ into $H$-modules (if $n_i \neq 0$). Since $P_i$ is $p^\alpha$-dimensional, its decomposition into simple $H$-reps must be
$$P_i = \oplus_{j \in J} U_j^{n_j},$$
where all the $U_j$ are 1-dimensional. In fact, all the $U_j$ must be isomorphic, since we have that $KGHU_1 = P_i$ and in the procedure of determining the decomposition group, we can take for elements in $GH - H$ elements of the form $(g,e)$. Since $(g,e)$ commutes with $e \times H$, the decomposition group is the whole group $GH$, whence alle the $U_i$ are isomorphic. But this contradicts with \theref{irred} so no factors $P_i$ appear in \eqref{dec}.
\end{proof}

\begin{lemma}\lemlabel{W}
If $M_{H}$ is an irreducible $KH$-glider and $W_i^{l_i}$ appears in \eqref{dec} with $\dim(W_i) = p^\alpha q^\beta$ then $l_i p^\alpha \leq q^\beta$.
\end{lemma}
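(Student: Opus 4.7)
The plan is to exploit that $GH = G \times H$ is an internal direct product of a $p$-group and a $q$-group. Since $K$ is algebraically closed of characteristic zero, every irreducible $K(G\times H)$-representation is an (external) tensor product of an irreducible $KG$-representation and an irreducible $KH$-representation. In particular, for each index $i$ appearing in the $W$-summands of \eqref{dec} we can write $W_i \cong A_i \otimes B_i$ with $A_i \in \Irr(G)$, $B_i \in \Irr(H)$, and $\dim_K A_i = p^\alpha$, $\dim_K B_i = q^\beta$.

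Restriction to $H$ only sees the second tensor factor: the $G$-factor $A_i$ is, from $H$'s viewpoint, just a $p^\alpha$-dimensional trivial vector space. Hence
$$(W_i)_{|H} \;\cong\; A_i \otimes B_i \;\cong\; B_i^{\,p^\alpha}$$
as $KH$-modules, so that
$$(W_i^{l_i})_{|H} \;\cong\; B_i^{\,l_i p^\alpha}.$$
This says that in the decomposition of $M_{|H}$ into simple $KH$-modules, the irreducible $B_i$ occurs with multiplicity at least $l_i p^\alpha$ (contributions from other summands of \eqref{dec} can only increase this multiplicity).

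Now apply \theref{irred} to the $KH$-glider $M_{|H}$, which is irreducible by hypothesis. Condition (1) of \theref{irred} requires that each irreducible constituent of $M_{|H}$ appears with multiplicity bounded by its $K$-dimension. Applied to the constituent $B_i$, this yields $l_i p^\alpha \leq \dim_K B_i = q^\beta$, which is the desired inequality. The main subtlety to check is that one really may identify $(W_i)_{|H}$ with $B_i^{p^\alpha}$; but this is immediate from the $G \times H$ tensor-product description together with the fact that the whole chain on $GH$ restricts coherently to $H$, so the multiplicities seen by $M_{|H}$ are at least those produced inside a single $W_i^{l_i}$ factor.
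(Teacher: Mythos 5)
Your argument is correct, but it derives the crucial decomposition of $(W_i)_{|H}$ by a different route than the paper. The paper does not invoke the tensor-product description of $\Irr(G\times H)$ at this point; it instead repeats the Clifford-theoretic argument of the preceding lemma: since elements of the form $(g,e)$ centralize $\{e\}\times H$, the decomposition group of an irreducible $H$-constituent of $W_i$ is all of $GH$, so $(W_i)_{|H}\cong U^{k}$ is isotypic with $\dim U=q^{j}$ for some $j\leq\beta$ and $k=p^{\alpha}q^{\beta-j}$; condition (1) of \theref{irred} applied to the irreducible glider $M_H$ then gives $l_i p^{\alpha}q^{\beta-j}\leq q^{j}$, and the chain $l_ip^{\alpha}\leq q^{2j-\beta}\leq q^{\beta}$ finishes the proof. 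Your tensor-product argument pins the isotypic constituent down exactly ($U=B_i$, so $j=\beta$ and $k=p^{\alpha}$), which makes the final inequality immediate with no computation and shows in passing that the case $j<\beta$ the paper allows for cannot actually occur. Both proofs rest on the same two pillars --- isotypicity of $(W_i)_{|H}$ and the multiplicity bound of \theref{irred} --- and your handling of the latter (the multiplicity of $B_i$ in $M_{|H}$ is at least $l_ip^{\alpha}$, hence at most $\dim B_i=q^{\beta}$) matches the paper's exactly. The only point I would make explicit is why \theref{irred} applies to $M_{|H}$ with top module all of $M$: irreducibility of the $KH$-glider forces $M=KHa$ with $M_d=Ka$, so the theorem's hypotheses are met; the paper is equally terse on this, so it is not a gap relative to the intended level of detail.
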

\begin{proof}
In the decomposition of $M$ as an $H$-module there appears the decomposition of $W_i^{l_i}$ into $H$-modules. By the same argument in the proof of the previous lemma, the decomposition group must be $GH$, hence $W = U^{k}$ for some irreducible $H$-rep and some $k \in \mathbb{N}$. $U$ cannot be $1$-dimensional, because then $M_H$ would not be irreducible. Thus $\dim(U) = q^j$ for some $j$ and $k =p^\alpha q^{\beta -j}$. In particular we have that $\beta \geq j$. \theref{irred} then entails that $ l_i k = l_i p^\alpha q^{\beta - j} \leq q^j$. Hence
$$l_ip^\alpha \leq q^{2j - \beta} = q^{2j - 2\beta} q^\beta  \leq q^\beta,$$
where the last inequality follows since $j - \beta \leq 0$.
\end{proof}

\begin{proposition}\proplabel{irrGH}
Let $\Omega \supset M$ be an irreducible $KGH$ glider of essential length $d$, If $M_H$ and $M_G$ are still irreducible, then $M = \oplus_{i} V_i$.
\end{proposition}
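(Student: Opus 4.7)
The plan is to start from the decomposition
\[
M = \bigoplus_i V_i \oplus \bigoplus_i P_i^{m_i} \oplus \bigoplus_i Q_i^{k_i} \oplus \bigoplus_i W_i^{l_i}
\]
guaranteed by \theref{irred} and \eqref{dec}, and to show that irreducibility of both $M_H$ and $M_G$, combined with the manifest symmetry between $G$ and $H$ in the set-up, forces $m_i = k_i = l_i = 0$ for every $i$.

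First I would apply the lemma immediately preceding \lemref{W} to $M_H$, concluding that $m_i = 0$ and hence that no $P_i$ appears in the decomposition. The argument used there is symmetric in the two factors: it only relies on the fact that elements of the form $(g,e)$ commute with $e \times H$ inside $GH$, forcing the decomposition group to be all of $GH$. Running the dual version (elements $(e,h)$ commute with $G \times e$) against the irreducible $KG$-glider $M_G$ gives $k_i = 0$, eliminating the $Q_i$ contribution.

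For the mixed-dimensional summands $W_i$, with $\dim W_i = p^\alpha q^\beta$ and $\alpha,\beta \ge 1$, I would invoke \lemref{W} directly: irreducibility of $M_H$ yields $l_i p^\alpha \le q^\beta$, while the symmetric version applied to the irreducible $M_G$ yields the dual bound $l_i q^\beta \le p^\alpha$. If any $l_i$ were strictly positive, chaining the two inequalities would force $p^\alpha \le q^\beta \le p^\alpha$, hence $p^\alpha = q^\beta$. Since $p \neq q$ are distinct primes and both exponents are positive, this is impossible. Thus $l_i = 0$ for all $i$, and combining with the previous step yields $M = \bigoplus_i V_i$ as claimed.

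The only genuine point to verify is that the two preceding lemmas, which are stated under the assumption that $M_H$ is irreducible, really do admit verbatim $G \leftrightarrow H$ swaps to conclusions about $M_G$. Both proofs rest only on the direct-product structure of $GH$ and on the fact that the two coordinate inclusions supply commuting copies; this symmetry is transparent and not a real obstacle. With that symmetry in hand, the whole argument reduces to the elementary prime-power contradiction in the last step, which I expect to be the only substantive piece.
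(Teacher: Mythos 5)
Your proposal is correct and follows essentially the same route as the paper: eliminate the $P_i$ and $Q_i$ summands by applying the lemma on decomposition groups to $M_H$ and its $G\leftrightarrow H$ mirror to $M_G$, then rule out the $W_i$ by combining \lemref{W} with its symmetric counterpart to force the impossible equality $p^\alpha = q^\beta$. The paper's own proof is exactly this argument, written as the chained inequality $l_i p^\alpha \leq q^\beta \leq p^\alpha/l_i$.
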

\begin{proof}
Irreducibility as $KH$-, resp. $KG$-glider shows that no $P_i$'s, resp. $Q_i$'s appear. Suppose that some $W_i^{l_i}$ of dimension $p^\alpha q^\beta$ appears. From \lemref{W} applied for $KH$ and $KG$ shows that 
$$l_i p^\alpha \leq q^\beta \leq \frac{p^\alpha}{l_i},$$
hence $l_i =1$, but then $p^\alpha = q^\beta$, a contradiction. 
\end{proof}

Recall that the irreducible representations of a nilpotent group $GH$ of order $p^\alpha q^\beta$ arise as tensor products $U \ot V$ of irreducible representations of $G$, resp. $H$. We clearly have that for an irreducible $GH$-representation $W$ we have that $W_G$ and $W_H$ are irreducible if and only if $W$ is 1-dimensional. \propref{irrGH} gives one direction of this statement in the situation of glider representations. Let $M = \oplus_i V_i \supset \ldots \supset Ka$ be an irreducible $KGH$-glider of $\el(M) = d$. If some of the appearing $V_i$, say $V_1$ and $V_2$, are isomorphic as $G$-modules, then $M_G$ is definitely no longer irreducible by \theref{irred}. So we have the glider analogue 

\begin{proposition}\proplabel{irrGH2}
Let $\Omega \supset M$ be an irreducible $KGH$ glider of essential length $d$. Then $M_H$ and $M_G$ are irreducible if and only if $M = \oplus_i V_i$, where all the $V_i$ are non-isomorphic both as $G$- and as $H$-modules.
\end{proposition}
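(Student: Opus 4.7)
The plan is to prove both implications using \propref{irrGH} combined with \theref{irred}. The forward direction follows rapidly once \propref{irrGH} is invoked; the reverse direction requires propagating irreducibility down each level of the chain.

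For the direction ($\Rightarrow$), \propref{irrGH} already forces $M = \bigoplus_i V_i$ with each $V_i$ one-dimensional. It remains to show that no two $V_i$ are isomorphic as $G$-modules (the $H$-case being symmetric). Suppose, for contradiction, that $V_1 \cong V_2$ as $G$-modules. Then the $G$-isotypic decomposition of $M$ contains the one-dimensional $G$-irreducible $V_1|_G$ with multiplicity at least two. At the top layer $M \supset KG\cdot a$ of the $KG$-glider $M_G$, condition (1) of \theref{irred} demands $m_i \leq n_i = 1$ for every irreducible constituent, which fails here. Hence $M_G$ cannot be irreducible, a contradiction.

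For the direction ($\Leftarrow$), write $a = \sum_i v_i$ with each $v_i \in V_i$ nonzero (otherwise the corresponding $V_i$ would not appear in $KGH\cdot a = M$). Viewed over $G$, the module $M$ is a direct sum of pairwise non-isomorphic one-dimensional $G$-reps, each appearing with multiplicity one, and each $v_i$ spans its one-dimensional component. Both conditions of \theref{irred} are trivially satisfied, so the top layer $M \supset KG\cdot a$ is an irreducible $KG$-glider of essential length one. To extend this to the full chain $e \subset G_1 \subset \ldots \subset G$, I would apply \lemref{irr1} to truncate the underlying $KGH$-glider from below and reapply the multiplicity-and-independence check of \theref{irred} at each remaining level $KG_k\cdot a$; the linear independence of characters of the distinct $V_i$ on $G$ keeps the projected generators linearly independent at each layer. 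The argument for $M_H$ is entirely symmetric.

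The main obstacle I anticipate is the possible coincidence of restrictions $V_i|_{G_k} \cong V_j|_{G_k}$ on intermediate subgroups $G_k$: pairwise non-isomorphism over $G$ does not automatically descend to each $G_k$. I plan to resolve this by regrouping coincident restrictions into $G_k$-isotypic components at each level and verifying that only those $V_i$ whose $G_k$-characters actually contribute nontrivially to $KG_k\cdot a$ enter the count, so that condition (1) of \theref{irred} is still met with multiplicity equal to the number of truly distinct isotypic components present. This is the step that requires the most care, but it should reduce to a direct verification using the generator $a$ and the one-dimensionality of the $V_i$.
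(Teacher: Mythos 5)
Your forward direction is precisely the paper's argument (and is in fact more detailed than what the paper writes down): \propref{irrGH} forces $M=\bigoplus_i V_i$ with every $V_i$ one-dimensional, and a repeated $G$-isotype would make a one-dimensional $G$-irreducible occur in $M$ with multiplicity at least $2$, violating condition (1) of \theref{irred} for $M_G \supset Ka$. That half is correct and matches the source.

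The reverse direction is where the proposal falls short: it is a plan rather than a proof, and the step you yourself flag as ``requiring the most care'' is never executed. Moreover, the plan aims at the wrong target. The phenomenon you worry about --- $V_i|_{G_k}\cong V_j|_{G_k}$ on an intermediate $G_k$ even though the $V_i$ are pairwise non-isomorphic over $G$ --- does occur, but it is not an obstruction, and ``reapplying the multiplicity check of \theref{irred} at each level'' is not the right criterion: \theref{irred} characterizes the single containment $M \supset Ka$ for the length-one chain $e\subset G$, i.e.\ it detects exactly when $KGa=M$; it says nothing about the intermediate layers of a longer glider. What actually finishes the argument, and what the paper implicitly relies on whenever it invokes \theref{irred} for longer chains (e.g.\ in the discussion preceding \theref{pgroup}), is this: for a glider $M\supset KG_{d-1}a\supset\ldots\supset Ka\supset 0$ with $M_d=Ka$ one-dimensional and nonzero, any subfragment $N$ with $N_d\neq 0$ has $N_d=Ka$ and hence $N_i\supseteq KG_{d-i}a=M_i$ for all $i$, so $N=M$ once $KGa=M$; and any subfragment with $N_d=0$ is trivial of type $T_2$. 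So the only thing to verify is $KGa=M$ (and $KHa=M$), which follows from \theref{irred} with all $n_i=1$ and all multiplicities equal to $1$ --- exactly the hypothesis that the $V_i$ are pairwise non-isomorphic as $G$- (resp.\ $H$-) modules. Collapses of the intermediate layers $KG_ka$ caused by coinciding restrictions are harmless and need no regrouping argument; indeed, if you carried out your proposed level-by-level multiplicity count on $M$ as a $G_k$-module, you would be led to the wrong conclusion that such gliders are reducible whenever two restrictions coincide.
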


Suppose now that $M = \oplus_i V_i$, where the $V_i$ are non-isomorphic 1-dimensional $GH$-reps, but we drop the condition of $M_H$ and $M_G$ remaining both irreducible gliders. We can consider $\Omega \supset M$ as an $FKG_{d-1}H_{d-1}$-glider with the second filtration of \eqref{filtrations} on $G_{d-1}H_{d-1}$. Then either $M$ remains irreducible, which says that $K_{d-1}H_{d-1}a = \oplus_i V_i$, or either $ K_{d-1}H_{d-1}a \subsetneq \oplus_i V_i$. The first situation occurs exactly when all the $V_i$ remain non-isomorphic as $G_{d-1}H_{d-1}$-reps. If we are in the latter case, suppose that $V_1 \cong V_2$ as $G_{d-1}H_{d-1}$. This means that, up to renumbering, we can take $S \cong V_1$ as a $G_{d-1}H_{d-1}$-building block. However, looking at the decomposition group $G_{d-1}H_{d-1} \subset F \subset GH$ does not give us more information. Indeed, in the decomposition of $M$ we have $S$ and some conjugate $(g,h)S$ for some $(g,h) \in GH - G_{d-1}H_{d-1}$, but these are equivalent as $G_{d-1}H_{d-1}$-modules by assumption. Hence $F = GH$. So we have to take a different approach here. Rewrite 
$$M = \oplus_i U_i \ot U'_i,$$
where the $U_i$, resp. $U'_i$ are 1-dimensional $G$-, resp. $H$-reps. The following statements are obvious
$$\begin{array}{c}
M_G {\rm~is~irreducible~} \Leftrightarrow {\rm~all~the~appearing~} U_i {\rm~are~non}-{\rm isomorphic};\\
M_H {\rm~is~irreducible~} \Leftrightarrow {\rm~all~the~appearing~} U'_i {\rm~are~non-isomorphic}.
\end{array}$$

\begin{example}
If $M = U_1 \ot U_1' \oplus U_1 \ot U_2' \supset \ldots \supset Ka$, then 
$$M_G \cong U_1^{\oplus 2}, \quad M_H \cong U'_1 \oplus U'_2.$$
One verifies that the decomposition groups in both cases are just $F = GH$. The difference, however, lies in the number of building blocks! Indeed, for $M_G$, we have that $KGa \cong U_1 \subset U_1^{\oplus}$, where the embedding is of diagonal type. Hence there is only one building block $S = KGa$ and $M_G = S \oplus (e,h)S$. On the other hand, $KHa \cong U'_1 \oplus U'_2$, so we have two buidling blocks $S_i = U'_i, ~i=1,2$.
\end{example}

In fact, the previous example easily generalizes to 
\begin{lemma}
Let $M = \oplus_i V_i$ be an irreducible $FKGH$-glider, then the number of building blocks as $FKG$-, resp. $FKH$-glider corresponds to the number of non-isomorphic $G$-, resp. $H$-modules among the appearing $V_i$ in the decomposition of $M$.
\end{lemma}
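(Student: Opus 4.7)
The plan is to reduce the counting of building blocks to a count of simple summands of $KGa$ (respectively $KHa$), and to control the latter via \theref{irred} together with the fact that each $V_i$ is one-dimensional. The example just before the lemma already exhibits the mechanism; the lemma should fall out of recognising that exactly the same analysis works once we group the summands $V_i$ of $M$ by their $G$-isomorphism class (resp.\ $H$-isomorphism class).

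Concretely, I would argue as follows. Partition the index set of the decomposition $M=\bigoplus_i V_i$ according to $G$-isomorphism: let $U^{(1)},\ldots,U^{(s)}$ be the distinct one-dimensional $G$-representations occurring among the $V_i$'s, with multiplicities $n_j$, so that
\[
M_{|G}\;\cong\;\bigoplus_{j=1}^{s}\bigl(U^{(j)}\bigr)^{n_j}.
\]
Writing $a=\sum_i v_i$ with $v_i\in V_i$, we group the $v_i$'s whose $V_i$ restricts to $U^{(j)}$ into a single vector $a_j\in (U^{(j)})^{n_j}$; then $a=a_1+\cdots+a_s$ and $KGa=\bigoplus_{j=1}^{s} KGa_j$ inside $M_{|G}$, since the different isotypic components are $G$-stable and pairwise non-isomorphic.

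Next I would invoke \theref{irred} applied to $M$ as $FKGH$-glider: because each $V_i$ appears with multiplicity one in $M$ and is one-dimensional, the theorem forces every $v_i$ to be non-zero. Consequently, for each $j$, the vectors entering into $a_j$ are all non-zero elements of the one-dimensional space $U^{(j)}$, hence they span $U^{(j)}$. Applying \theref{irred} again, this time inside the $G$-isotypic component $(U^{(j)})^{n_j}$, yields $KGa_j\cong U^{(j)}$: exactly one copy, embedded diagonally. Summing over $j$ gives
\[
KGa\;\cong\;\bigoplus_{j=1}^{s} U^{(j)},
\]
so $KGa$ decomposes into precisely $s$ pairwise non-isomorphic simple $G$-summands, one for each $G$-isomorphism class of the $V_i$.

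Finally, I would invoke the Clifford-style framework of \cite{CVo1} to identify these simple $G$-summands with the building blocks for the $FKG$-glider structure on $M$: the building blocks are precisely the simple constituents of the $G$-module $KGa=KG\cdot M_d$ generated by the body/bottom of the glider (this is exactly what the preceding example illustrates, where $KGa\cong U_1$ gives the single building block $S=KGa$, while $KHa\cong U'_1\oplus U'_2$ gives two). The symmetric argument with $H$ in place of $G$ yields the corresponding count for $FKH$-gliders, and the lemma follows. The only subtle point—and the main obstacle—is the last identification: one must check that the notion of ``building block'' used in \cite{CVo1} for the bottom level of the filtration really amounts to the simple summands of $KGa$, rather than some finer decomposition at an intermediate $G_i$. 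Since here each $V_i$ is one-dimensional, the chain of subgroups refines the $G$-decomposition trivially, so no discrepancy can occur and the identification is unambiguous.
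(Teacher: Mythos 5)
Your argument is correct and is precisely the generalization of the preceding example that the paper asserts without writing out: the paper's entire ``proof'' is the remark that the example ``easily generalizes,'' and your grouping of the $v_i$ by $G$- (resp.\ $H$-)isomorphism class, the use of \theref{irred} to force each $v_i\neq 0$ and hence $KGa_j\cong U^{(j)}$, is exactly that generalization made explicit. Your worry about the identification of building blocks with the simple summands of $KGa$ is also resolved the way you suspect: since $GH=G\times H$, conjugation by elements of $H$ preserves $G$-isomorphism classes, so the minimal set of simples whose conjugates exhaust the components of $M_{|G}$ is indeed one representative per $G$-isomorphism class, matching your count.
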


To any irreducible $FKGH$-glider of $\el = d$ and of the form $M = \oplus_i V_i$ we now associate a triple $(a,b,c)$, where
$$\begin{array}{l}
a = {\rm~number~of~} GH{\rm-compontents~of~} M\\
b = {\rm~number~of~building~blocks~as~} KG-{\rm glider},\\
c = {\rm~number~of~building~blocks~as~} KH-{\rm glider}.
\end{array}$$

\begin{proposition}\proplabel{a=bc}
In the situation above, the glider $M$ is the tensor product $M = M_1 \ot M_2$ of an $FKG$- and an $FKH$-glider if and only if the associated triple $(a,b,c)$ satisfies $a = bc$.
\end{proposition}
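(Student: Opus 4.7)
The plan is to prove both directions using \theref{irred}, together with the identification $K[G_iH_i] = K[G_i] \otimes K[H_i]$ coming from the internal direct product $GH = G \times H$ (valid since $G$ and $H$ have coprime orders, so every $1$-dimensional $GH$-character factors uniquely as $U \otimes U'$ with $U \in \Irr(G)$ and $U' \in \Irr(H)$).

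For the forward direction, assume $M \cong M_1 \otimes M_2$ as $FKGH$-gliders. The irreducibility of $M$ forces each factor to be irreducible (a glider direct-sum decomposition of either factor would split $M$), and by \theref{irred} each factor is a sum of pairwise non-isomorphic $1$-dimensional characters, say $M_1 = \bigoplus_{j=1}^{b'} U_j$ and $M_2 = \bigoplus_{k=1}^{c'} U'_k$. Then $M \cong \bigoplus_{j,k} U_j \otimes U'_k$ is a sum of $b'c'$ pairwise non-isomorphic $GH$-characters. Restriction to $G$ gives $M_G \cong M_1^{\oplus c'}$, whose number of non-isomorphic $G$-components is $b'$; by the preceding lemma this equals $b$, and symmetrically $c = c'$. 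Thus $a = b'c' = bc$.

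For the converse, assume $a = bc$. Let $U_1,\ldots,U_b$ and $U'_1,\ldots,U'_c$ enumerate the distinct $G$- and $H$-characters appearing in the factorizations $V_i = U_{j_i} \otimes U'_{k_i}$. Since there are $bc$ such pairs and $a = bc$ components, every pair $(j,k)$ occurs exactly once, yielding a $GH$-isomorphism $M \cong \bigl(\bigoplus_j U_j\bigr) \otimes \bigl(\bigoplus_k U'_k\bigr)$. Define $M_1 = \bigoplus_j U_j$ with generator $a_1 = \sum_j u_j$ for nonzero $u_j \in U_j$, and $M_2 = \bigoplus_k U'_k$ with $a_2 = \sum_k u'_k$; by \theref{irred} these are irreducible $FKG$- and $FKH$-gliders of essential length $d$.

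Writing the generator of $M$ as $a = \sum_i v_i$ with $v_i \in V_i$, and fixing isomorphisms $V_i \cong U_{j_i} \otimes U'_{k_i}$ under which $v_i$ corresponds to $\lambda_i\,(u_{j_i} \otimes u'_{k_i})$, the irreducibility of $M$ (via \theref{irred}) forces all $\lambda_i \neq 0$. Rescaling each summand by $\lambda_i^{-1}$ then defines a $GH$-module isomorphism $\varphi:M\to M_1\otimes M_2$ with $\varphi(a) = \sum_{(j,k)} u_j\otimes u'_k = a_1\otimes a_2$, where the hypothesis $a = bc$ is used to say that the sum ranges over \emph{all} pairs. Filtration compatibility is then the computation
$$\varphi(M_i) \;=\; K[G_{d-i}H_{d-i}]\varphi(a) \;=\; K[G_{d-i}]a_1 \otimes K[H_{d-i}]a_2 \;=\; (M_1\otimes M_2)_i.$$
The main subtlety I anticipate is distinguishing the two roles of $a = bc$ versus plain irreducibility: the non-vanishing of the $\lambda_i$ comes from irreducibility alone, whereas $a = bc$ is precisely what guarantees $\varphi(a)$ equals the rank-one tensor $a_1\otimes a_2$. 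If $a < bc$, some pairs $(j,k)$ are absent from $\varphi(a)$ and no rank-one factorization of the generator is possible, obstructing any glider tensor decomposition.
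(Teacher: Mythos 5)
Your proof is correct and follows the same skeleton as the paper's: the forward direction extracts multiplicity-freeness of both tensor factors from the irreducibility of $M$, and the converse identifies the set of pairs $(j,k)$ occurring among the $V_i \cong U_j \ot U'_k$ with the full product $\{1,\dots,b\}\times\{1,\dots,c\}$. Two remarks. First, in the forward direction the paper argues directly that a repeated character in either factor produces a repeated $1$-dimensional $GH$-component of $M$, contradicting \theref{irred}; your intermediate claim that irreducibility of $M$ forces irreducibility of both factors is false as a general principle (see \exaref{tensor}, where an irreducible tensor product has a reducible factor), and your parenthetical justification overlooks that a direct-sum decomposition one of whose summands is a \emph{trivial} subfragment does not contradict irreducibility. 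The claim does hold when all components are $1$-dimensional, but the direct multiplicity argument is safer and is all you actually need. Second, in the converse your counting argument ($a$ distinct pairs inside a $b\times c$ grid with surjective projections, so $a=bc$ forces every pair to occur) replaces the paper's explicit reordering of the $V_i$ into rows with a common $G$-character, and you go beyond the paper by realizing the isomorphism at the level of generators ($\varphi(a)=a_1\ot a_2$ after rescaling by the nonzero $\lambda_i$) and verifying filtration compatibility via $K[G_{d-i}H_{d-i}]=K[G_{d-i}]\ot K[H_{d-i}]$ --- details the paper leaves implicit, even though the statement concerns gliders and not merely modules. That extra care is a genuine, if small, improvement.
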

\begin{proof}
Suppose that $M = M_1 \ot M_2$. If $M_1 = U_1^{\oplus n_1} \oplus \ldots \oplus U_b^{\oplus n_b}$  and $M_2 = (U'_1)^{\oplus m_1} \oplus \ldots \oplus (U'_c)^{\oplus m_c}$ with at least one of the $n_i >1$ or $m_1 > 1$, say $n_1 > 1$, then $M$ would contain two distinct isomorphic components  $U_1 \ot U_1'$, which contradicts the irreducibility of $M$ since these components are 1-dimensional. Hence $M_1 = U_1 \oplus \ldots \oplus U_b$ and $M_2 = U'_1 \oplus \ldots \oplus U'_c$, from which $a = bc$ follows. Conversely, assume $a = bc$. Up to reordering, we can write
$$M = (V_1 \oplus \ldots \oplus V_b) \oplus (V_{b+1} \oplus \ldots \oplus V_{2b}) \oplus \ldots \oplus (V_{b(c-1)+1} \oplus \ldots \oplus V_{bc}),$$
such that for every $0 \leq i \leq b$: $V_{jb + i} \cong V_{kb + i}$ as $G$-reps for all $0 \leq j,k \leq c-1$. Hence we have that 
$$ U_i \ot U'_{jb+i} \cong V_{jb + i} \cong V_{kb+i} \cong U_i \ot U'_{kb+i}$$
and it follows that 
$$M \cong U_1 \ot (U'_1 \oplus U'_{b +1} \oplus \ldots U'_{b(c-1) +1}) \oplus \ldots \oplus U_b \ot (U'_b \oplus U'_{2b} \oplus \ldots \oplus U'_{bc}).$$
If for example $U'_1 \cong U'_{b+1}$ as $H$-reps, then $V_1 \cong V_{b+1}$ as $GH$-rep which contradicts the irreducibility of $M$. Since only $c$ non-isomorphic $H$-reps occur in the above decomposition we have that 
$$U'_1 \oplus U'_{b +1} \oplus \ldots U'_{b(c-1) +1} \cong \ldots \cong U'_b \oplus U'_{2b} \oplus \ldots \oplus U'_{bc}$$
as $H$-reps, which entails that 
$$M \cong (U_1 \oplus \ldots \oplus U_b) \ot (U'_1 \oplus \ldots \oplus U'_{b(c-1)+1}).$$
\end{proof}
\begin{corollary}
If $\Omega_1 \supset M_1$, resp. $\Omega_2 \supset M_2$ are $KG$-, resp. $KH$-gliders such that all the appearing components of $\Omega_1$ and $\Omega_2$ are 1-dimensional, then we have that $(\Omega_1 \supset M_1) \ot (\Omega_2 \supset M_2)$ is an irreducible $FKGH$-glider if and only if $\Omega_1 \supset M_1$ and $\Omega_2 \supset M_2$ are irreducible.
\end{corollary}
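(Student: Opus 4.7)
The plan is to derive the corollary directly from Theorem \ref{the:irred}, exploiting that every constituent of $\Omega_1$, $\Omega_2$ is one-dimensional. First I would record what Theorem \ref{the:irred} says in the 1-dimensional setting: since every relevant $\dim U$ equals $1$, the condition $m_i \leq n_i$ forces every simple constituent to appear with multiplicity $0$ or $1$, and condition (2) merely demands that the associated generator-component be nonzero. So $\Omega_1 \supset M_1$ is irreducible if and only if $M_1 = U_1 \oplus \cdots \oplus U_b$ for pairwise non-isomorphic 1-dimensional $G$-representations $U_i$, with generator $a_1 = u_1 + \cdots + u_b$, $u_i \neq 0$; and analogously $M_2 = U'_1 \oplus \cdots \oplus U'_c$ with $a_2 = u'_1 + \cdots + u'_c$.

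For the $(\Leftarrow)$ direction, assume both $M_j$ are irreducible. The tensor glider carries generator $a_1 \otimes a_2 = \sum_{i,j} u_i \otimes u'_j$ inside $M_1 \otimes M_2 = \bigoplus_{i,j} (U_i \otimes U'_j)$. The key subpoint to establish is that the $bc$ summands $U_i \otimes U'_j$ are pairwise non-isomorphic as $GH$-modules: the restriction of $U_i \otimes U'_j$ to $G \times \{e\}$ recovers $U_i$, and the restriction to $\{e\} \times H$ recovers $U'_j$, so any isomorphism $U_i \otimes U'_j \cong U_{i'} \otimes U'_{j'}$ of $GH$-modules forces $i = i'$ and $j = j'$. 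Hence $M_1 \otimes M_2$ is a direct sum of $bc$ distinct 1-dimensional $GH$-representations each appearing with multiplicity one, and the generator is a sum of one nonzero vector from each, so Theorem \ref{the:irred} yields the irreducibility of $(\Omega_1 \supset M_1) \otimes (\Omega_2 \supset M_2)$.

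For the $(\Rightarrow)$ direction I would argue contrapositively: suppose that, say, $\Omega_1 \supset M_1$ is not irreducible. By the reduction of Theorem \ref{the:irred} just described, some 1-dimensional $G$-summand $U_i$ must appear in $M_1$ with multiplicity $n_i \geq 2$ (the other failure mode, namely vanishing of a generator-component, is incompatible with the assumption that the corresponding $U_i$ actually occurs). Fixing any $U'_j$ that appears in $M_2$, the 1-dimensional component $U_i \otimes U'_j$ then appears in $M_1 \otimes M_2$ with multiplicity at least $2$, violating the $m_i \leq n_i$ condition of Theorem \ref{the:irred} for the tensor product, contradicting our assumption. Symmetry handles the case where $M_2$ fails irreducibility, completing the proof.

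The main obstacle I anticipate is purely bookkeeping: one needs to confirm that under the natural filtration $(M_1 \otimes M_2)_n = \sum_{i+j=n} (M_1)_i \otimes (M_2)_j$ the essential length of the tensor glider is the length one expects, and that its chain-end generator really is $a_1 \otimes a_2$; once these are granted, the argument above is essentially a direct invocation of Theorem \ref{the:irred}. As a consistency check, the associated triple of the tensor product is $(bc, b, c)$, which satisfies $a = bc$, in harmony with Proposition \ref{prop:a=bc}.
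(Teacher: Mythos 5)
Your overall route is the right one and is essentially the paper's: the corollary is stated there without an explicit proof, as a consequence of \propref{a=bc}, and both that proposition's argument and yours rest on the same reduction of \theref{irred} to the one-dimensional case (a multiplicity-free sum of pairwise non-isomorphic one-dimensional constituents, with a generator all of whose components are nonzero), combined with the observation that restricting $U_i \ot U'_j$ to $G$ and to $H$ separates the two indices. Your $(\Leftarrow)$ direction is correct as written.

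There is, however, one genuine flaw in the $(\Rightarrow)$ direction: you dismiss the second failure mode of \theref{irred} --- a vanishing generator-component --- as ``incompatible with the assumption that the corresponding $U_i$ actually occurs.'' That is not justified. The hypothesis of the corollary only constrains the constituents of $\Omega_1$ and $\Omega_2$ to be one-dimensional; it says nothing about the generator, so a glider such as $U_1 \oplus U_2 \supset K(u_1 + 0)$ is a perfectly legitimate non-irreducible input and must be dealt with, not excluded. Fortunately the case is harmless: if the $U_i$-component $u_i$ of $a_1$ vanishes, then the $U_i \ot U'_j$-component $u_i \ot u'_j$ of $a_1 \ot a_2$ vanishes as well, so condition (2) of \theref{irred} fails for the tensor glider and the contrapositive goes through; you should simply argue this instead of waving it away. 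The bookkeeping you flag at the end (that the tensor filtration has the expected essential length with bottom $K(a_1 \ot a_2)$, so that \theref{irred} applies to the product at all) is indeed needed for full rigour, but the paper is equally silent on it, so I would not count that against you.
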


If we drop the condition that the appearing components be all 1-dimensional, then we still have that irreducibility as $KG$- and as $KH$-glider implies irreducibility of the tensor product. The converse is no longer true. 
\begin{example}\exalabel{tensor}
Consider the trivial chain $K \subset KGH$. Let $P$ be a $p$-dimensional simple $G$-rep with $p, p' \in P$ linear independent. If $U$ is a one-dimensional $H$-rep then $M \supset Ka$ with $M = P \ot U_1 \oplus P \ot U_1$, $a = p \ot u + p' \ot u$ is an irreducible $FKGH$-glider. We can write 
$$ (M \supset Ka) = (P \supset K(p+p')) \ot (U^{\oplus 2} \supset Ku),$$
but $U_1^{\oplus 2} \supset Ku$ is not irreducible.
\end{example}

As another corollary we have a nice characterization of abelian nilpotent groups in terms of glider representations:
\begin{theorem}\thelabel{char}
Let $GH$ be a nilpotent group of order $p^\alpha q^\beta$. TFAE
\begin{enumerate}
\item $GH$ is abelian;
\item Every irreducible $FKGH$ glider $M$ is isomorphic to the tensor product $M_1 \ot M_2$ of an $FKG$- and an $FKH$-glider if and only if the associated triple $(a,b,c)$ of $M$ satisfies $a = bc$.
\end{enumerate}
\end{theorem}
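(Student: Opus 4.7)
The plan is to prove the two implications separately. For $(1)\Rightarrow(2)$, I would observe that if $GH$ is abelian then, since $K=\overline{K}$ of characteristic zero, every irreducible $KGH$-representation is one-dimensional. By \theref{irred} (applied with $n_i=1$, which forces $m_i\in\{0,1\}$), any irreducible $FKGH$-glider $M$ decomposes as $M=\bigoplus_i V_i$ with the $V_i$ pairwise non-isomorphic one-dimensional $GH$-representations. Hence the triple $(a,b,c)$ is defined for \emph{every} irreducible $FKGH$-glider, and the desired equivalence is exactly the content of \propref{a=bc}.

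For $(2)\Rightarrow(1)$, I would argue the contrapositive. Assume $GH$ is not abelian. Because $GH=G\times H$ with $G$ a $p$-group and $H$ a $q$-group, at least one factor, say $G$, is nonabelian and therefore admits an irreducible representation $P$ of dimension $p^\alpha>1$. Picking linearly independent $p,p'\in P$, a one-dimensional $H$-representation $U$, and a nonzero $u\in U$, I would form (as in \exaref{tensor}) the fragment
$$M\supset Ka,\quad M=(P\otimes U)\oplus(P\otimes U),\quad a=p\otimes u+p'\otimes u.$$
By \theref{irred} this is an irreducible $FKGH$-glider --- the $GH$-irrep $P\otimes U$ appears with multiplicity $2\leq\dim(P\otimes U)$ and the vectors $p\otimes u$, $p'\otimes u$ are linearly independent --- yet it factors as $(P\supset K(p+p'))\otimes(U^{\oplus 2}\supset Ku)$, exhibiting $M$ as a tensor product of an $FKG$-glider with an $FKH$-glider. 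However, since the summands $P\otimes U$ are not one-dimensional, $M$ is not of the form $\bigoplus_i V_i$ and no triple $(a,b,c)$ is attached to $M$. Hence the universal equivalence asserted in (2) breaks down for this $M$.

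The main obstacle is conceptual rather than computational: one must recognize that the formalism of the triple $(a,b,c)$ applies to an irreducible $FKGH$-glider exactly when $M$ is a direct sum of one-dimensional $GH$-representations, so (2) holds as a universal statement over all irreducible gliders precisely when no higher-dimensional $GH$-irrep exists, i.e.\ when $GH$ is abelian. Once this viewpoint is adopted, the abelian direction is immediate from \propref{a=bc} and the nonabelian direction is witnessed by the construction of \exaref{tensor}.
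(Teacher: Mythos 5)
Your argument is essentially the paper's own proof: the abelian direction is exactly the observation that all irreducibles are one-dimensional so \propref{a=bc} applies, and the nonabelian direction uses precisely the glider of \exaref{tensor}. The one place you diverge is in explaining why that example refutes (2): you argue that no triple is attached to $M$ because $M$ is not a sum of one-dimensionals, but an instance where the triple is undefined does not falsify the universally quantified biconditional --- it merely falls outside its scope, and under that reading (2) would hold vacuously for such $M$ and the implication $(2)\Rightarrow(1)$ would collapse. The intended reading is that the quantities $a$ (number of irreducible $GH$-components) and $b,c$ (numbers of building blocks as $FKG$- and $FKH$-glider) make sense for every irreducible glider; for the glider of \exaref{tensor} one gets $a=2$ and $b=c=1$, so $M$ is a tensor product while $a\neq bc$, and this failure of the ``if'' direction is the actual counterexample.
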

\begin{proof}
If $GH$ is abelian, then all irreducible representations are 1-dimensional and the result follows from \propref{a=bc}. Conversely, suppose that $GH$ is not abelian, so there exists some $p$-dimensional irreducible representation $P \in \Irr(G)$ (up to interchanging the role of $G$ and $H$) and from \exaref{tensor} we get a counterexample.
\end{proof}

Suppose that $d = 3$, then \theref{pgroup} allows to extend the result of \propref{a=bc} when $\dim(M_1) = 1$.

\begin{proposition}
Let $d=3$ and $\Omega \supset M \supset M_1 \supset M_2 \supset Ka$ be an irreducible $FKGH$-glider with $M_1 = Ka$, then $M$ is isomorphic to the tensor product $N_1 \ot N_2$ of an $FKG$- and an $FKH$-glider if and only if the associated triple $(a,b,c)$ satisfies $a = bc$.
\end{proposition}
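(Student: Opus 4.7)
The plan is to reduce to the setting of \propref{a=bc} by exploiting the hypothesis $M_1 = Ka$ together with \theref{pgroup} (or rather the nilpotent corollary that follows it). The constraint that $M_1$ is one-dimensional forces enough rigidity on the higher-dimensional $GH$-components of $M$ so that the combinatorial argument of \propref{a=bc} carries through.

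First, I would decompose $M = \bigoplus_i V_i^{m_i}$ as a $KGH$-module, writing each $V_i = P_i \ot Q_i$ using the Sylow splitting $GH = G \times H$. Since $M_1 = KG_2 H_2 \cdot a = Ka$, the element $a$ spans a one-dimensional $G_2H_2$-subrepresentation of $M$, necessarily of the form $U \ot U'$ with $U \in \Irr(G_2)$ and $U' \in \Irr(H_2)$ both one-dimensional. Every $V_i$ in the support of $a$ therefore has $P_i$ lying over $U$ and $Q_i$ lying over $U'$. Applying the corollary to \theref{pgroup} inside the $p$-group $G$ and the $q$-group $H$ separately, all appearing $P_i$ share a common dimension $p^\alpha$ and all appearing $Q_i$ share a common dimension $q^\beta$. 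This uniformity is what was automatic in \propref{a=bc} (where everything was $1$-dimensional) and is what we now get from \theref{pgroup}.

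For the direction $a=bc \Rightarrow$ tensor product, I would mimic the construction in \propref{a=bc}. Let $P_1,\ldots,P_b$ and $Q_1,\ldots,Q_c$ be the distinct $G$- and $H$-components appearing in $M$. The assumption $a = bc$ guarantees that each pairing $(P_j,Q_k)$ occurs exactly once in $M$, so the decomposition rewrites as $M = (P_1 \oplus \cdots \oplus P_b) \ot (Q_1 \oplus \cdots \oplus Q_c)$, exactly as in the final step of the proof of \propref{a=bc}. Writing $a = \sum_{j,k} p_j \ot q_k$ with uniform choices, one defines $N_1$ as the $FKG$-glider with top $P_1 \oplus \cdots \oplus P_b$ and generator $p_1 + \cdots + p_b$, and $N_2$ analogously. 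For the converse, if $M \cong N_1 \ot N_2$ with $N_1$ having $b$ distinct irreducible $G$-summands and $N_2$ having $c$ distinct irreducible $H$-summands, then $M$ lists all $bc$ pairings, and \theref{irred} together with $M_1 = Ka$ forces every appearing $GH$-component to have multiplicity one, so $a = bc$.

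The main obstacle will be verifying filtration compatibility of the tensor product construction: one needs $F_i KGH \cdot (a_1 \ot a_2) = (F_i KG \cdot a_1) \ot (F_i KH \cdot a_2)$ at each level $i = 1, 2$, so that the tensor glider $N_1 \ot N_2$ really coincides with $\Omega \supset M \supset M_1 \supset M_2 \supset Ka$ and not merely agrees at the top. This relies on the factorization $G_i H_i = G_i \times H_i$ coming from the Sylow decomposition, yielding $F_i KGH = F_i KG \ot F_i KH$, together with the fact that $M_1 = Ka$ pins down the $G_2H_2$-structure at the generator to be exactly $U \ot U'$. Once this compatibility is in hand, the rest of the argument is the bookkeeping already carried out in \propref{a=bc}.
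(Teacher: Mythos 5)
Your opening moves match the paper's: since $G_2H_2$ is abelian, $M_1=Ka$ spans a one-dimensional $G_2H_2$-representation $U\ot U'$, and \theref{pgroup} forces all $G$-factors (resp.\ $H$-factors) of the components of $M$ to have a common dimension. But you stop short of the observation that actually carries the paper's proof: over a fixed character $U$ of the abelian group $G_2$ there is at most \emph{one} irreducible $G$-representation of dimension $p$ (namely $\Ind_{G_2}^G(U)$), so whenever the common dimension is $p$ all the appearing $P_i$ are \emph{isomorphic}, not merely equidimensional. This collapses three of the paper's four cases $(\bigvee,\bigvee)$, $(\bigvee,\bigwedge)$, $(\bigwedge,\bigvee)$ to situations where $M\cong P\ot Q$, $P\ot(\oplus_j U'_j)$ or $(\oplus_i U_i)\ot Q$ is automatically a tensor product, leaving only the case $(\bigwedge,\bigwedge)$, where every component is one-dimensional and \propref{a=bc} applies verbatim.

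Because you miss this, you instead try to rerun the grid-counting argument of \propref{a=bc} with $p^\alpha$- and $q^\beta$-dimensional factors, and that is where the gaps lie. The triple $(a,b,c)$ counts \emph{building blocks}, and the lemma identifying the number of building blocks with the number of non-isomorphic components is only proved in the paper for $M=\oplus_i V_i$ with one-dimensional $V_i$; you use that identification for higher-dimensional components without justification. Moreover, your converse direction asserts that \theref{irred} together with $M_1=Ka$ forces every appearing $GH$-component to have multiplicity one; \theref{irred} alone does not give this (see \exaref{tensor}, where $P\ot U_1$ appears with multiplicity two in an irreducible glider), and the rescue via $M_1=Ka$ --- namely that all component vectors of the generator must lie in the one-dimensional $U\ot U'$-eigenspaces of the respective copies, hence span a one-dimensional space, forcing multiplicity one by \theref{1} --- is exactly the argument you would need to supply and do not. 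The filtration-compatibility issue you flag at the end is comparatively minor; the real missing step is the uniqueness of the $p$-dimensional simple lying over $U$.
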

\begin{proof}
\theref{pgroup} entails that any irreducible $G_2$-representation (which is 1-dimensional as $G_2$ is abelian) lifts to either a $p$-dimensional simple $G$-rep or to $p$ non-isomorphic 1-dimensional simple $G$-reps. Moreover, since $p^2 + (|G_2|-p)p = p|G_2| = |G|$, we know that $G$ has only one $p$-dimensional simple, with $p$ non-isomorphic $G_2$-components. We can do the same for $H$ (replacing $p$ by $q$) and since $M_1 \cong V \ot U$, with $V \in \Irr(G_2), U \in \Irr(H_2)$ ($M_1 = KG_2H_2a = Ka$, with $G_2H_2$ abelian), we have four cases. We introduce the following notation: if $W$ is an irreducible $G_2$- (or $H_2$-)rep that lifts to a $p$- (or $q$-)dimensional simple, we denote it by $\bigvee$, if $W$ lifst to $p$ one-dimensional simples, we denote it by $\bigwedge$. So we have the following four cases:
$$\begin{array}{cc}
(\bigwedge, \bigwedge): & {\rm result~follows~from~} \propref{a=bc}, \\
(\bigvee,\bigvee): & M \cong P \ot Q,\\
(\bigvee,\bigwedge): & M \cong P \ot (U_1^\epsilon \oplus \ldots \oplus U_q^\epsilon),\\
(\bigwedge,\bigvee): & M \cong (V_1^\epsilon \oplus \ldots \oplus V_p^\epsilon) \ot Q,
\end{array}$$
where $\epsilon \in \{0,1\}$. So in the last three cases, we always have that $M$ is isomorphic to the tensor product and the result follows.
\end{proof}

In fact, all results from this last section equally hold for more general nilpotent groups $G = P_1\ldots P_n$ of order $p_1^{a_1}\ldots p_n^{a_n}$.

\end{document}